\newtheorem{theo}{Theorem}[section]
\newtheorem{lemm}[theo]{Lemma}
\newtheorem{prop}[theo]{Proposition}
\newtheorem{coro}[theo]{Corollary}
\theoremstyle{definition}
\newtheorem{defi}[theo]{Definition}
\newtheorem{cons}[theo]{Construction}
\newtheorem{rem}[theo]{Remark}
\newtheorem*{theo*}{Theorem}
\numberwithin{equation}{section}
\newcommand{\opp}{^{\mathrm{op}}}
\newcommand{\cat}{\mathrm}
\newcommand{\icat}{\mathbf}
\newcommand{\oper}{\mathscr}
\newcommand{\on}{\operatorname}
\newcommand{\id}{\mathrm{id}}
\newcommand{\R}{\mathbb{R}}
\renewcommand{\L}{\mathbb{L}}
\newcommand{\Map}{\on{Map}}
\newcommand{\map}{\on{map}}
\newcommand{\Hom}{\mathrm{Hom}}
\newcommand{\End}{\mathrm{End}}
\newcommand{\Aut}{\mathrm{Aut}}
\newcommand{\Fun}{\on{Fun}}
\newcommand{\Mat}{\on{Mat}}
\newcommand{\h}{\widehat}
\newcommand{\Pro}{\on{Pro}}
\newcommand{\CC}{\on{C}^*}
\newcommand{\C}{\on{C}_*}
\newcommand{\QQ}{\mathbb{Q}}
\newcommand{\kk}{\Bbbk}
\newcommand{\ZZ}{\mathbb{Z}}
\renewcommand{\SS}{\mathbb{S}}
\newcommand{\sslash}{\mathbin{/\mkern-6mu/}}
\newcommand{\pab}{\oper{P}a\oper{B}}
\newcommand{\pacd}{\oper{P}a\oper{CD}}
\newcommand{\od}{\oper{D}}
\newcommand{\pugt}{\widehat{\underline{\mathrm{GT}}}}
\newcommand{\pgt}{\widehat{\mathrm{GT}}}
\newcommand{\gt}{\mathrm{GT}}
\newcommand{\op}{\cat{Op}}
\newcommand{\wop}{\cat{WOp}}
\newcommand{\calg}[1]{\cat{Alg}^{E_{\infty}}_{#1}}
\newcommand{\pG}{\h{\cat{G}}}
\renewcommand{\S}{\cat{S}}
\newcommand{\pS}{\h{\cat{S}}}
\newcommand{\G}{\cat{G}}
\newcommand{\icalg}[1]{\icat{CAlg}_{#1}}
\newcommand{\is}{\icat{S}}
\newcommand{\ips}{\h{\icat{S}}}
\newcommand{\iop}{\icat{Op}}
\newcommand{\imod}{\icat{Mod}}
\renewcommand{\sp}{\icat{Sp}}
\newcommand{\psp}{\h{\icat{Sp}}}
\title{Automorphisms of the little disks operad with torsion coefficients}
\author{Geoffroy Horel}
\begin{document}

\address{LAGA (UMR 7539), Département de Mathématiques, Université Paris 13, 99 avenue Jean-Baptiste Clément , 93430 Villetaneuse, France}
\email{horel@math.univ-paris13.fr}

\keywords{}

\begin{abstract}
We compute the automorphisms of the Bousfield-Kan completion at a prime $p$ of the little two-disks operads and show that they are given by the pro-$p$ Grothendieck-Teichmüller group. We also show that the Grothendieck-Teichmüller group acts faithfully on the $p$-complete stable little disks operad.
\end{abstract}

\maketitle

\tableofcontents

\section{Introduction}

This paper is concerned with the homotopy automorphisms of the operad $\od$ of little $2$-disks with $p$-torsion coefficients, both unstably and stably. In \cite{horelprofinite}, we studied the automorphisms of the profinite completion of $\od$. The profinite completion being a slightly unfamiliar construction for non-nilpotent spaces (such as the spaces that constitute the little $2$-disks operad), we have decided to write this paper in order to explain the consequences of the main result of \cite{horelprofinite} in terms of constructions that homotopy theorists may be more used to (e.g. the Bousfield-Kan completion and the theory of Hopf cooperads). We also prove results for the automorphisms of the stable version of $\od$  that parallel the known results in the rational case due to Tamarkin and Willwacher in \cite{tamarkinaction,willwacherkontsevich}.

In \cite{drinfeldquasi}, Drinfel'd  constructs the Grothendieck-Teichm\"uller group $\pgt_p$. This is a profinite group that receives a map $\on{Gal}(\bar{\QQ}/\QQ)$. The group $\pgt_p$ is also the group of units of a monoid $\pugt_p$ which is a certain explicit submonoid of the monoid of endomorphism of $\h{(F_2)}_p$, the pro-$p$ completion of the free group on $2$ generators. There is a map $\chi:\pgt_p\to\ZZ_p^\times$ called the cyclotomic character that fits in a commutative diagram
\[\xymatrix{
\on{Gal}(\bar{\QQ}/\QQ)\ar[d]\ar[r]&\h{\ZZ}^\times\cong\prod_{p}\ZZ_p^\times\ar[d]\\
\pgt_p\ar[r]_{\chi}&\ZZ_p^\times
}
\]
where the top map is the cyclotomic character of $\on{Gal}(\bar{\QQ}/\QQ)$ and the right hand side map is the projection on the $p$-th factor.

Given an operad $\oper{P}$ in simplicial sets, and a field $\kk$, we can apply the singular cochains functor levelwise. The resulting object denoted $\CC(\oper{P},\kk)$ is an operad in the opposite of the category of dg-$E_\infty$-algebras over $\kk$ (we call such a structure a Hopf cooperad). Our first main theorem is the following.

\begin{theo}\label{theo: main theorem hopf}
Let $\kk$ be an algebraically closed field of characteristic $p$, then there is an isomorphism of groups
\[\pgt_p\cong\Aut_{\on{Ho}\icat{HCOp}_\kk}(\CC(\od,\kk)),\]
where the automorphism group on the right is computed in the homotopy category of Hopf cooperads over $\kk$.
\end{theo}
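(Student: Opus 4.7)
My plan is to translate the statement into pro-$p$ homotopy theory via an operadic version of Mandell's cochain theorem, and then invoke the main theorem of \cite{horelprofinite}. Mandell's theorem asserts that the cochain functor $\CC(-,\kk)$ is fully faithful, viewed as a functor from the homotopy category of $p$-complete nilpotent spaces of finite type to the opposite of the homotopy category of $E_\infty$-algebras over $\kk=\overline{\FF_p}$. The first step is to promote this to an equivalence of homotopy categories of operads (on the space side) and Hopf cooperads (on the cochain side), after restricting to appropriate subcategories. This amounts to exhibiting $\CC$ as a symmetric monoidal functor on a suitable model, and verifying that the levelwise Mandell equivalence transports operadic structures to Hopf cooperadic ones.

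Since the configuration spaces $\od(n)$ are not nilpotent for $n\geq 3$, Mandell's theorem does not apply to $\od$ directly. To bridge this gap I would pass to the levelwise Bousfield-Kan $\FF_p$-completion, or equivalently the pro-$p$ completion $\od^{\wedge}_p$, which has the same $\FF_p$-cohomology as $\od$ and the same $E_\infty$-cochain algebra at each level. Assembled compatibly with the operad structure this yields a weak equivalence of Hopf cooperads $\CC(\od,\kk)\simeq\CC(\od^{\wedge}_p,\kk)$, so the operadic Mandell equivalence from the previous step provides an isomorphism
\[
\Aut_{\on{Ho}\icat{HCOp}_\kk}(\CC(\od,\kk))\;\cong\;\Aut(\od^{\wedge}_p),
\]
where the right hand side denotes the automorphism group of $\od^{\wedge}_p$ in an appropriate homotopy category of pro-$p$ operads.

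Finally, I would identify this pro-$p$ automorphism group with $\pgt_p$ by specializing the main theorem of \cite{horelprofinite}. That result computes the automorphisms of the profinite completion of $\od$ in terms of $\pgt$; the pro-$p$ refinement then comes out of Drinfel'd's description of $\pgt_p$ as a submonoid of $\End(\h{F_2}_p)$ acting naturally on pro-$p$ parenthesized braids, which are precisely what one obtains by completing the pure braid groups underlying $\od$ at $p$. The main obstacle I expect is the first step: the operadic upgrade of Mandell's theorem, including a clean characterization of the essential image that is broad enough to contain $\CC(\od,\kk)$ after pro-$p$ completion. A secondary subtlety is constructing $\od^{\wedge}_p$ as a genuine pro-$p$ operad whose $\kk$-cochains recover $\CC(\od,\kk)$, rather than merely a levelwise completion, so that the Mandell comparison is compatible with the Hopf cooperad structure.
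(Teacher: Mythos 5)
Your proposal follows essentially the same route as the paper: an operadic upgrade of Mandell's full faithfulness (which the paper implements via product-preserving $\Psi\opp$-diagrams, i.e.\ weak operads, so that a product-preserving fully faithful functor on pro-$p$-finite spaces automatically induces one on Hopf cooperads), the equivalence $\CC(\h{\od},\kk)\simeq\CC(\od,\kk)$, and the identification of the automorphisms of the pro-$p$ completion with $\pgt_p$ by rerunning the arguments of \cite{horelprofinite} in the pro-$p$ setting. The only point you overcomplicate is the "main obstacle": no characterization of the essential image is needed, since full faithfulness alone identifies automorphism groups of an object visibly in the image, and the paper's definition of $\icat{HCOp}_\kk$ as weak operads in $(\icalg{\kk})\opp$ dissolves the symmetric-monoidal strictification worry.
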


This should be compared with the following analogous theorem due to Benoit Fresse in the rational case. 

\begin{theo}[Fresse, \cite{fressehomotopy}]
Let $\kk$ be a field of characteristic zero. Then there is a model for $\CC(\od,\kk)$ that is a commutative  Hopf dg-cooperad (i.e. a cooperad in commutative dg-algebras over $\kk$). Moreover there is an isomorphism of groups
\[\on{GT}(\kk)\cong \Aut_{\on{Ho}\icat{HCOp}_\kk}(\CC(\od,\kk)),\]
where the automorphism group on the right is computed in the homotopy category of dg-commutative Hopf cooperads over $\kk$ and $\on{GT}(\kk)$ denotes the group of $\kk$ points of the pro-algebraic Grothendieck-Teichm\"uller group.
\end{theo}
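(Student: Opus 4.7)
The overall plan is to realize the homotopy automorphism group of $\CC(\od,\kk)$ as the automorphism group of an explicit combinatorial operad whose identification with $\gt(\kk)$ is essentially Drinfel'd's theorem on associators.

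First, construct a strictly commutative model. Apply Sullivan's piecewise polynomial de Rham functor $\Omega^*_{PL}(-;\kk)$ arity-wise to $\od$ to obtain a cooperad $\Omega^*_{PL}(\od)$ in commutative dg-$\kk$-algebras. In characteristic zero, the strictification theorem for $E_\infty$-algebras identifies $\Omega^*_{PL}(\od)$ with $\CC(\od,\kk)$ in the homotopy category of $E_\infty$-Hopf cooperads, so the problem reduces to computing $\Aut_{\on{Ho}\icat{HCOp}_\kk}(\Omega^*_{PL}(\od))$ in the homotopy category of commutative Hopf dg-cooperads.

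Second, translate to the operadic side via a Sullivan-type adjunction between commutative Hopf dg-cooperads and operads in simplicial sets, suitably restricted so as to induce an equivalence of homotopy categories between $\kk$-Malcev complete objects on the topological side and objects of finite $\kk$-type on the algebraic side. Under this equivalence, $\Omega^*_{PL}(\od)$ corresponds to the $\kk$-Malcev completion $\od^{\wedge}_{\kk}$. Now invoke the Kontsevich-Tamarkin formality theorem, which identifies $\od^{\wedge}_{\kk}$ with the $\kk$-form $\pacd\otimes\kk$ of the operad of parenthesized chord diagrams, equivalently with the Malcev completion of the operad $\pab$ of parenthesized braids.

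Third, compute $\Aut(\pacd\otimes\kk)$ directly. The operad $\pacd$ is generated by an associator and a symmetry subject to the pentagon and hexagon relations, and by definition an operad automorphism preserving these generators is a pair $(\lambda,\varphi)\in\kk^\times\times\h{F_2}(\kk)$ satisfying Drinfel'd's defining equations for $\gt(\kk)$. Chaining the three equivalences yields the theorem. The main obstacle is the second step: the compatibility of Sullivan's de Rham functor with operadic composition is substantially more delicate than in the classical rational homotopy theory of spaces, and the source and target categories must be cut down very carefully for the Sullivan adjunction to become an equivalence; this occupies a large portion of Fresse's two volumes. Formality and Drinfel'd's theorem are then the essential external inputs that make the final computation possible.
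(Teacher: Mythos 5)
This statement appears in the paper only as a quoted theorem of Fresse, with a citation to \cite{fressehomotopy1,fressehomotopy2} and no proof, so there is no internal argument to compare yours against; I can only judge your sketch against the cited source. At that level your outline does track Fresse's actual strategy: pass to a commutative model, set up an operadic upgrade of the Sullivan rational-homotopy adjunction restricted to complete objects of finite $\kk$-type, use the existence of Drinfel'd associators to replace the completion of $\od$ by the parenthesized chord diagram operad, and read off $\gt(\kk)$ from Drinfel'd's presentation.

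Two of your steps, however, are genuine gaps rather than omitted routine details. First, applying $\Omega^*_{PL}(-;\kk)$ arity-wise to an operad does not produce a strict cooperad in commutative dg-algebras: the cocomposition would require inverting the K\"unneth quasi-isomorphism $\Omega^*_{PL}(X)\otimes\Omega^*_{PL}(Y)\to\Omega^*_{PL}(X\times Y)$, which is not an isomorphism. The existence of a strictly commutative Hopf dg-cooperad model is itself the first assertion of the theorem and is a nontrivial rectification result, so it cannot be taken as the starting point of the argument. Second, and more seriously, your final step computes the group of \emph{strict} automorphisms of $\pacd\otimes\kk$ inducing the identity on objects, which is $\gt(\kk)$ by Drinfel'd's definition; but the theorem concerns automorphisms in the homotopy category of Hopf cooperads. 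Showing that every homotopy class of automorphisms is realized by a strict automorphism fixing the objects, and that distinct strict automorphisms do not become homotopic, is the core of Fresse's second volume and proceeds by an obstruction-theoretic analysis of mapping spaces of Hopf cooperads. Without that step your chain of identifications only produces a homomorphism $\gt(\kk)\to\Aut_{\on{Ho}\icat{HCOp}_\kk}(\CC(\od,\kk))$, not an isomorphism.
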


Note that in the rational case, any $E_\infty$-algebra can be strictified to a commutative dg-algebra but this is not true over fields of positive characteristic. This is the reason why our theorem is stated for a less rigid notion of Hopf cooperad than Fresse's theorem.

Our main result also has a version for the Bousfield-Kan $p$-completion with respect to $\ZZ/p$. Since the Bousfield-Kan completion only preserves products up to weak equivalences, the levelwise application of completion to an operad only has the structure of a weak operad (i.e. a homotopy coherent algebra over the algebraic theory that controls the structure of operads).

\begin{theo}\label{theo: main theorem BK}
Let $(\ZZ/p)_\infty$ denotes the Bousfield-Kan completion with respect to the ring $\ZZ/p$ (as defined in \cite[Chapter I]{bousfieldhomotopy}). Then there is an isomorphism of groups
\[
\pgt_p\cong\Aut_{\on{Ho}\icat{WOpS}}((\ZZ/p)_{\infty}\od),
\]
where the automorphism group is computed in the homotopy category of weak operads in spaces.
\end{theo}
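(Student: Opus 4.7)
The plan is to deduce this theorem from the main result of \cite{horelprofinite} on the automorphisms of the $p$-profinite completion of $\od$, by comparing weak operads in Bousfield-Kan $\ZZ/p$-complete spaces with strict operads in $p$-profinite spaces.

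The first step will be to show that each space $\od(n)$ is $\FF_p$-good in the sense of Bousfield-Kan. Since $\od(n)$ is a $K(\pi,1)$ for the pure braid group $P_n$, which is an iterated extension of finitely generated free groups and hence residually $p$-finite and of finite cohomological type, one obtains that $(\ZZ/p)_\infty \od(n)$ is weakly equivalent to the underlying simplicial set of the $p$-profinite completion $\widehat{\od(n)}_p$ considered in \cite{horelprofinite}.

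The next step is to upgrade this arity-wise comparison to a comparison of weak operads. Applying the materialization functor from $p$-profinite spaces to simplicial sets to the strict profinite operad $\widehat{\od}_p$ produces a weak operad in spaces, since this functor only preserves finite products up to weak equivalence. I would construct a natural zig-zag of weak equivalences of weak operads connecting $(\ZZ/p)_\infty \od$ to this materialization. Through it, the main theorem of \cite{horelprofinite} yields a homomorphism $\pgt_p \to \Aut_{\on{Ho}\icat{WOpS}}((\ZZ/p)_\infty \od)$. Injectivity would be verified by composing with the cochains functor $\CC(-,\FFb_p)$: the resulting map into $\Aut_{\on{Ho}\icat{HCOp}_{\FFb_p}}(\CC(\od,\FFb_p))$ must recover the isomorphism of Theorem \ref{theo: main theorem hopf}. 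Surjectivity is the delicate direction: an automorphism of $(\ZZ/p)_\infty \od$ as a weak operad should descend through the above comparison to an automorphism of the profinite operad $\widehat{\od}_p$, which is then accounted for by $\pgt_p$ via \cite{horelprofinite}.

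The principal obstacle is the bookkeeping of the weak versus strict operadic structure through the completion functors. Because $(\ZZ/p)_\infty$ does not commute with finite products on the nose, one must work with weak operads from the outset and show that the comparison with the strict profinite setting induces an equivalence on the relevant spaces of homotopy automorphisms. Ensuring that the materialization functor from $p$-profinite spaces to simplicial sets is homotopically faithful enough to transfer automorphisms of operadic objects, and that no phantom automorphisms arise from the flexibility of homotopy-coherent operadic coherences, is the main technical point to settle.
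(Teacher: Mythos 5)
Your overall route is the same as the paper's: identify $(\ZZ/p)_\infty\od$ with the materialization $\Mat(\h{N^\Psi\od})$ of the pro-$p$ completion and transport the computation of $\End_{\on{Ho}\icat{WOp}\ips_p}(\h{N^\Psi\od})\cong\pugt_p$ (Theorem \ref{theo : main p}) across this identification. One remark on the comparison step: the paper obtains it more cheaply than you propose. Rather than invoking $\FF_p$-goodness of the pure braid groups, it uses Morel's explicit fibrant replacement of a levelwise finite profinite simplicial set, the pro-object $\{\on{Tot}^s(\on{Res}^\bullet(X/R))\}$, and observes (Lemma \ref{lemm: BK vs pro}) that for $X$ levelwise finite the canonical map $(\ZZ/p)_\infty X\to \Mat(\h{X})$ is an isomorphism on the nose. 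Since this map is natural in $X$, the compatibility with the weak operad structure that you worry about is automatic, and no zig-zag has to be built by hand; that $\od(n)$ has the homotopy type of a finite complex is all that is needed.

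The genuine gap is in your final step. Saying that an automorphism of $(\ZZ/p)_\infty\od$ ``should descend'' to an automorphism of the pro-$p$ operad is precisely the assertion that requires a nontrivial input; as written, your argument only produces a homomorphism $\pgt_p\to\Aut_{\on{Ho}\icat{WOpS}}((\ZZ/p)_\infty\od)$ together with an injectivity check, and surjectivity is left unproved. Concretely, one must show that materialization induces an isomorphism
\[\End_{\on{Ho}\icat{WOp}\ips_p}(\h{N^\Psi\od})\cong\End_{\on{Ho}\icat{WOp}\is}(\Mat(\h{N^\Psi\od})),\]
which amounts to the counit $\h{\Mat(\h{N^\Psi\od})}\to\h{N^\Psi\od}$ being an equivalence, i.e.\ that completing the materialization recovers the pro-$p$ operad. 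This is Theorem \ref{theo: partial result}, and it rests on the fact that the pro-$p$ completions of the pure braid groups are \emph{strongly complete} (every finite-index subgroup is open), which holds by the Nikolov--Segal theorem because these profinite groups are topologically finitely generated. Without this, the discrete automorphisms of the materialized object could a priori be strictly larger than the continuous ones, and the descent you invoke would fail. Your injectivity check via $\CC(-,\FFb_p)$ is fine but is the easy direction; the descent heuristic needs to be replaced by the strong-completeness argument or an equivalent one.
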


Let us mention that, in the category of spaces, we have a rigidification results giving us an equivalence between the homotopy theory of weak operads in spaces and the homotopy theory of strict operads in spaces (this is the main result of \cite{bergnerrigidification}).

Both of these theorems actually follow from the more fundamental Theorem \ref{theo : main p} about the automorphism of the pro-$p$-completion of the operad of little $2$-disks. This pro-$p$-completion is an operad in the $\infty$-category of pro-$p$-spaces from which one can recover the Hopf cooperad $\CC(\od,\kk)$ and the weak operad $(\ZZ/p)_\infty\od$. 

We also study the stable automorphisms of the little $2$-disks operads with $p$-torsion coefficients. In that case, the rational case is understood by the following theorm due to Willwacher.

\begin{theo}[Willwacher, \cite{willwacherkontsevich}]
There is an isomorphism
\[\on{GT}(\QQ)\cong \Aut_{\on{Ho}\iop\imod_{H\QQ}}(H\QQ\wedge\Sigma^{\infty}_+\od),\]
where the automorphism group on the right is computed in the homotopy category of operads in $H\QQ$-modules
\end{theo}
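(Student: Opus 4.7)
The plan is to combine the rational formality of the little $2$-disks operad with Willwacher's computation of the zeroth cohomology of the Kontsevich graph complex. First I would invoke the Kontsevich-Tamarkin formality theorem, which produces a zig-zag of quasi-isomorphisms of operads in rational chain complexes between $C_*(\od, \QQ)$ and the Gerstenhaber operad $e_2 := H_*(\od, \QQ)$. Since $H\QQ$-module operads are equivalent (via the Shipley-Schwede Quillen equivalence) to operads in rational chain complexes, and $H\QQ \wedge \Sigma^{\infty}_+\od$ models $C_*(\od, \QQ)$ under this equivalence, the theorem reduces to identifying $\Aut(e_2)$ in the homotopy category of dg-operads over $\QQ$ with $\on{GT}(\QQ)$.

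I would then attack this automorphism group by deformation theory. The pro-unipotent part of $\Aut(e_2)$ is formally smooth with tangent Lie algebra given by $H^0$ of the operadic derivation complex $\on{Der}(e_2, e_2)$, and a theorem going back to Kontsevich identifies this derivation complex, up to a shift and a line corresponding to operadic weight scaling, with the full Kontsevich graph complex $\mathrm{GC}_2$. The main obstacle, and the technical heart of Willwacher's paper, is then the explicit computation $H^0(\mathrm{GC}_2) \cong \mathfrak{grt}_1$, the Grothendieck-Teichm\"uller Lie algebra; this is proved by a delicate argument comparing graph cocycles on configuration spaces with the defining relations of $\mathfrak{grt}_1$, and is by far the deepest input to the theorem.

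With the tangent Lie algebra in hand, exponentiation of the pronilpotent $\mathfrak{grt}_1$ produces the pro-unipotent group $\mathrm{GRT}_1(\QQ)$; the extra $\mathbb{G}_m$-action by operadic weight then yields $\mathrm{GRT}(\QQ) = \mathrm{GRT}_1(\QQ) \rtimes \QQ^{\times}$. Finally I would invoke Drinfel'd's theorem that the choice of an associator induces an isomorphism $\mathrm{GRT}(\QQ) \cong \on{GT}(\QQ)$, which assembles into the stated identification. Faithfulness of the resulting action follows by inspecting the images of generating operations—for instance the Lie bracket and the commutative product of the Gerstenhaber operad—on which nontrivial graph cocycles are easily seen to act nontrivially.
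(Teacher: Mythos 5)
This theorem is stated in the paper as an imported result: it is attributed to Willwacher, cited from \cite{willwacherkontsevich}, and no proof of it is given in the text. Indeed the paper explicitly remarks later that it does not need the full isomorphism, only the injectivity half, which it proves as Theorem \ref{theo: Tamarkin}. So there is no in-paper proof to compare yours against; the closest argument in the text establishes only injectivity of $\on{GT}(\kk)\to\Aut_{\on{Ho}\iop\imod_{H\kk}}(H\kk\wedge\Sigma^\infty_+\od)$, and does so by a different route: an associator identifies $\pab^{>0}_\kk$ with $\pacd^{>0}_\kk$ (hence $\on{GT}_1(\kk)$ with $\on{GRT}_1(\kk)$), and Tamarkin's injection $\mathfrak{grt}_1\hookrightarrow H^0(\on{Der}(\C(\pacd_\kk,\kk)))$ together with a short-exact-sequence argument does the rest. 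Your route --- formality of $\od$, Schwede--Shipley to pass from $H\QQ$-modules to chain complexes, identification of the deformation complex of $e_2$ with $\mathrm{GC}_2$ plus the rescaling class, and the computation $H^0(\mathrm{GC}_2)\cong\mathfrak{grt}_1$ --- is an accurate outline of how the cited result is actually proved, and it is the only known way to obtain surjectivity. But be aware that it is not self-contained: the step $H^0(\mathrm{GC}_2)\cong\mathfrak{grt}_1$ \emph{is} the main theorem of \cite{willwacherkontsevich}, so you are ultimately deferring to the same source the paper cites.

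Two genuine weak points in your write-up. First, the closing faithfulness argument (``nontrivial graph cocycles are easily seen to act nontrivially on the generating operations'') does not work as stated: classes in $H^0(\mathrm{GC}_2)$ act through automorphisms inducing the identity on homology, so they act trivially on the generating operations of $e_2$; injectivity of $\mathfrak{grt}_1\to H^0(\on{Der}(\C(\pacd_\kk,\kk)))$ is Tamarkin's nontrivial theorem (or, in Willwacher's formulation, part of the isomorphism with $H^0(\mathrm{GC}_2)$), not something read off from generators. If you have already established the isomorphism on tangent Lie algebras, faithfulness is automatic and this paragraph is redundant; as a standalone argument it is wrong. Second, the statement concerns the unital operad ($\Sigma^\infty_+\od$ includes the contractible $\od(0)$), whereas Willwacher's theorem, as quoted in the paper's own remark, is about $\od^{>0}$; the paper's proof of Theorem \ref{theo: Tamarkin} addresses this by extending the $\on{GT}(\kk)$-action from $\pab^{>0}_\kk$ to $\pab_\kk$, and your argument needs an analogous comparison of the automorphism groups of the unital and nonunital operads.
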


There are two options for what the $p$-torsion analogue of this theorem should be. The most obvious thing would be to try to study the automorphisms of $H\ZZ/p\wedge \Sigma_+^\infty\od$ or equivalently of the operad in chain complexes $\C(\od,\ZZ/p)$. There is an alternative which is in our sense more natural. Since rationalization is a smashing localization, the category of $H\QQ$-module is exactly the category of rational spectra. Thus the above theorem can equivalently be stated in the following way.

\begin{theo}
Let $L_\QQ\Sigma^{\infty}_+\od$ be the localization of the stabilization of the operad $\od$ at rational homology. Then there is an isomorphism
\[\on{GT}(\QQ)\cong \Aut_{\on{Ho}\iop\sp}(L_{\QQ}\Sigma^{\infty}_+\od),\]
where the automorphism group on the right is computed in the homotopy category of operads in spectra.
\end{theo}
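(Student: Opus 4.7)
The plan is to deduce this reformulation directly from Willwacher's theorem above, using only that rational localization on spectra is smashing. The key input is that the commutative algebra $H\QQ$ exhibits a symmetric monoidal equivalence
\[H\QQ\wedge(-):L_\QQ\sp\xrightarrow{\simeq}\imod_{H\QQ}\]
of presentable symmetric monoidal $\infty$-categories: indeed, $L_\QQ$ is a smashing localization of $\sp$, so a spectrum $X$ is rational if and only if the unit $X\to H\QQ\wedge X$ is an equivalence, and this identifies $L_\QQ\sp$ with the $\infty$-category of modules over the idempotent commutative algebra $H\QQ$.

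A symmetric monoidal equivalence of presentable $\infty$-categories induces an equivalence on the associated $\infty$-categories of operads. Applied to the display above, this sends $L_\QQ\Sigma^\infty_+\od$ to an operad whose underlying symmetric sequence is $H\QQ\wedge\Sigma^\infty_+\od$; compatibility with the operadic structure maps follows from the symmetric monoidality of the equivalence. One concludes
\[\Aut_{\on{Ho}\iop L_\QQ\sp}(L_\QQ\Sigma^\infty_+\od)\cong \Aut_{\on{Ho}\iop\imod_{H\QQ}}(H\QQ\wedge\Sigma^\infty_+\od),\]
and the right hand side is $\on{GT}(\QQ)$ by the theorem of Willwacher recalled just above.

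The last step is to identify automorphisms of $L_\QQ\Sigma^\infty_+\od$ computed in $\iop\sp$ with those computed in $\iop L_\QQ\sp$. This follows because the fully faithful inclusion $L_\QQ\sp\hookrightarrow\sp$ induces a fully faithful inclusion $\iop L_\QQ\sp\hookrightarrow\iop\sp$, and $L_\QQ\Sigma^\infty_+\od$ lies in its essential image since it is level-wise rational. The main point requiring care, such as it is, is the promotion of the smashing localization to a symmetric monoidal localization at the level of operads, but this is a formal consequence of the universal property of Bousfield localizations in the presentable symmetric monoidal setting, and does not require any specific input about the little disks operad.
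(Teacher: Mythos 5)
Your argument is correct and is exactly the route the paper intends: the paper states this theorem with no proof at all, justifying it only by the preceding one-line observation that rationalization is a smashing localization, so that the category of $H\QQ$-modules coincides with the category of rational spectra. Your proposal simply fleshes out that observation --- the symmetric monoidal equivalence $L_\QQ\sp\simeq\imod_{H\QQ}$, the induced equivalence on $\infty$-categories of operads, and the identification of automorphisms computed in $\iop\sp$ with those computed in $\iop L_\QQ\sp$ --- and then invokes Willwacher's theorem, which is the same approach.
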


In the present paper, we study the automorphisms of the operad in spectra $L_p\Sigma^{\infty}_+\od$ where $L_p$ denotes localization with respect to ordinary homology with mod-$p$ coefficients. This localization also coincides with the localization at the mod-$p$ Moore spectrum for connective spectra. The object $L_p\Sigma^{\infty}_+\od$ is a more fundamental object than $H\ZZ/p\wedge\Sigma_+^\infty\od$ in the sense that we can construct the latter from the former but the former remembers more data. For instance the action of the Steendrod algebra on the homology of a spectrum $X$ can be recovered from the knowledge of $L_pX$ but not from the $H\ZZ/p$-module $H\ZZ/p\wedge X$. We are unfortunately unable to compute the full group of automorphisms of the $H\ZZ/p$-localization of $\Sigma^\infty_+\od$ but we prove the following theorem.

\begin{theo}\label{theo: main theorem stable}
There exists an injective morphism of groups
\[\pgt_p\to \Aut_{\on{Ho}\iop\sp_p}(L_p\Sigma^\infty_+\od),\]
where the automorphism group on the right is computed in the homotopy category of operads in $H\ZZ/p$-local spectra.
\end{theo}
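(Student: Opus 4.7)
The plan is to deduce this from the fundamental Theorem \ref{theo : main p}, which provides a faithful action of $\pgt_p$ on the pro-$p$-completion of $\od$ viewed as a weak operad in pro-$p$-spaces, combined with the isomorphism of Theorem \ref{theo: main theorem hopf} for the mod-$p$ cohomology Hopf cooperad. The $p$-local stable operad $L_p\Sigma^\infty_+\od$ sits between these two objects: it receives the $\pgt_p$-action by transport from the pro-$p$-space level, and the resulting action can be detected after passage to Hopf cooperads.

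For the construction of the action, I would build a functor from weak operads in pro-$p$-spaces to operads in $H\ZZ/p$-local spectra by composing levelwise suspension spectrum with homotopy inverse limit of the resulting pro-$p$-spectrum. Applied to the pro-$p$-completion of $\od$, the outcome on the $n$-th arity is the $p$-completion of $\Sigma^\infty_+\od(n)$; because each $\od(n)$ is a $K(\pi,1)$ for a pure braid group, hence nilpotent and of finite type, this $p$-completion coincides with $L_p\Sigma^\infty_+\od(n)$. The ``weak operad'' subtlety disappears after stabilization and $p$-local materialization, so the output is an operad in $p$-local spectra equivalent to $L_p\Sigma^\infty_+\od$. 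Transporting the action of Theorem \ref{theo : main p} through this functor produces the desired homomorphism $\pgt_p\to \Aut_{\on{Ho}\iop\sp_p}(L_p\Sigma^\infty_+\od)$.

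For injectivity, I would compose further with the contravariant functor $\Map_{\sp}(-,H\kk)$ from operads in $p$-local spectra to Hopf cooperads over $\kk$, where $\kk$ is any algebraically closed field of characteristic $p$. Since $H\kk$ is $p$-complete, $\Map_{\sp}(L_p\Sigma^\infty_+X,H\kk)\simeq \Map_{\sp}(\Sigma^\infty_+X,H\kk)\simeq \CC(X,\kk)$ naturally in the space $X$. By naturality in $\pgt_p$, the composite
\[
\pgt_p\longrightarrow \Aut_{\on{Ho}\iop\sp_p}(L_p\Sigma^\infty_+\od)\longrightarrow \Aut_{\on{Ho}\icat{HCOp}_\kk}(\CC(\od,\kk))
\]
coincides with the $\pgt_p$-action on the mod-$p$ Hopf cooperad coming from the pro-$p$-completion, which by Theorem \ref{theo: main theorem hopf} is an isomorphism. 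Since this composite is bijective, the first arrow is injective.

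The main obstacle will be the construction step: setting up stabilization and materialization for weak operads in pro-$p$-spaces with enough coherence that the output is equivalent to $L_p\Sigma^\infty_+\od$ as an operad in $p$-local spectra, and that the resulting $\pgt_p$-action is compatible, after applying $\Map_{\sp}(-,H\kk)$, with the Hopf cooperad action of Theorem \ref{theo: main theorem hopf}. Once this functorial passage from pro-$p$ weak operads to $p$-local spectral operads is in place, injectivity follows formally from the two cited theorems.
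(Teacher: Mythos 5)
Your construction of the action is essentially the paper's: one extends $\Omega^\infty$ to a limit-preserving functor $\psp_p\to\ips_p$, takes the left adjoint $\Sigma^\infty_+:\ips_p\to\psp_p$ (promoted to a symmetric monoidal functor so that the weak-operad issue can be resolved via the comparison $u^*:\icat{WOp}\to\iop$), and identifies $\Mat(\Sigma^\infty_+\h{\od})$ with $L_p\Sigma^\infty_+\od$ using Corollary \ref{coro: equivalence of the two completions}. One remark: your justification for the identification on arities invokes nilpotence of $\od(n)$, but the pure braid groups are not nilpotent; what is actually used is only that $\od(n)$ is a finite CW-complex, so that $\Sigma^\infty_+\od(n)$ is bounded below with finitely generated homotopy groups. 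This part is fine.

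The injectivity argument, however, has a genuine gap. The second arrow in your composite
\[
\pgt_p\longrightarrow \Aut_{\on{Ho}\iop\sp_p}(L_p\Sigma^\infty_+\od)\longrightarrow \Aut_{\on{Ho}\icat{HCOp}_\kk}(\CC(\od,\kk))
\]
does not exist. The $E_\infty$-algebra structure on $\Map_{\sp}(\Sigma^\infty_+X,H\kk)\simeq\CC(X,\kk)$ is induced by the cocommutative coalgebra structure on $\Sigma^\infty_+X$ coming from the diagonal of $X$; this is extra data not encoded in the spectrum $L_p\Sigma^\infty_+X$, and an automorphism of $L_p\Sigma^\infty_+\od$ in $\iop\sp_p$ has no reason to preserve it. Applying $\Map_{\sp}(-,H\kk)$ levelwise to an operad in spectra therefore only yields a cooperad in $H\kk$-modules, not a Hopf cooperad, and the relevant target is $\Aut$ of that underlying cooperad, through which the forgetful map from $\Aut_{\on{Ho}\icat{HCOp}_\kk}(\CC(\od,\kk))$ need not be injective. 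So Theorem \ref{theo: main theorem hopf} cannot be invoked to detect the action after stabilization: this is precisely the information that linearization destroys, and if your argument worked it would render Tamarkin's theorem unnecessary. The paper instead proves faithfulness by a rationalization argument: it introduces the completed smash product $H\ZZ_p\barwedge-$, shows $H\ZZ_p\wedge\Sigma^\infty_+\od\simeq H\ZZ_p\barwedge L_p\Sigma^\infty_+\od$ as operads in $H\ZZ_p$-modules (Lemmas \ref{lemm: completed tensor product} and \ref{lemm: equivalence on finite spectra}), base changes to $H\QQ_p$, and observes that the resulting map $\pgt_p\to\Aut_{\on{Ho}\iop\imod_{H\QQ_p}}(H\QQ_p\wedge\Sigma^\infty_+\od)$ factors through the injection $\pgt_p\hookrightarrow\on{GT}(\QQ_p)$ followed by the map of Theorem \ref{theo: Tamarkin}, whose injectivity rests on Tamarkin's theorem that $\mathfrak{grt}_1$ injects into the degree-zero operadic derivations of $\C(\pacd_\kk,\kk)$. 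You would need to replace your detection step by something of this kind.
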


This theorem is quite surprising. In the unstable case, it is not hard to prove that the natural action of $\pgt_p$ on the $p$-adic homotopy type of $\od$ is faithful. Indeed, restricting this action to the space of operations of arity $3$, we get an action of $\pgt_p$ on $\h{PB_3}$, the pro-$p$-completion of the pure braid group on $3$ strands. The pro-$p$-completion of the free group on two generators $\h{F_2}$ sits inside $\h{PB_3}$ and the action of $\pgt_p$ restricts to the standard action on $\h{F_2}$,  which is faithful by construction. This is in sharp contrast with the stable case. In that case, we show in Proposition \ref{prop: levelwise triviality} that the action of $\pgt_p$ on each spectrum $L_p\Sigma^\infty_+\od(n)$ factors through the cyclotomic character and in particular is very far from being faithful.

\subsection*{Acknowledgments} I wish to thank the Max Planck Institute for Mathematics and the Hausdorff Institute for Mathematics for their hospitality during the period when this paper was written. This work has benefited greatly from conversations with Benoit Fresse, Thomas Willwacher, Thomas Nikolaus, Tobias Barthel, Drew Heard, Marcy Robertson and Pedro Boavida de Brito.

\section{Pro-$p$-homotopy theory}

We briefly review a few facts about $p$-adic homotopy theory. We denote by $\S$ the category of simplicial sets with the Kan-Quillen model structure. We denote by $\is$ the $\infty$-categorical localization of $\S$, also known as the $\infty$-category of spaces.

Let $X$ be a simplicial set and $\kk$ a commutative ring. We denote by $\CC(X,\kk)$ the cochain complex of singular cochains on $X$ with its differential graded $E_\infty$-algebra structure. This $E_\infty$-structure is constructed explicitly in \cite{bergercombinatorial}. If $X=\{X_i\}_{i\in I}$ is a pro-object in $\S$, we denote by $\CC(X,\kk)$ the $E_\infty$-algebra given by the formula:
\[\CC(X,\kk)=\on{colim}_I\CC(X_i,\kk).\]
Note that since $I$ is a filtered category this colimit can be computed in cochain complexes.

By definition of the pro-category, there is a unique cofiltered limit preserving functor
\[\CC(-,\kk):\Pro(\S)\to(\calg{\kk})\opp\]
such that the composition of this functor with the fully faithful inclusion $\S\to\Pro(\S)$ coincides with the above defined functor $\CC(-,\kk)$. We denote by $\icalg{\kk}$ the $\infty$-category obtained by inverting the quasi-isomorphism in the category of $E_\infty$-algebras over $\kk$. Note that by the main theorem of \cite{hinichrectification}, this coincides with the $\infty$-category of commutative algebras (in the sense of \cite{luriehigheralgebra}) in the symmetric monoidal $\infty$-category of chain complexes over $\kk$ or equivalently in the $\infty$-category of $H\kk$-modules (where $H\kk$ denotes the Eilenberg-MacLane spectrum associated to $\kk$).

We denote by $\is_{p-fin}$ the smallest $\infty$-subcategory of $\is$ containing all the objects $K(\ZZ/p,n)$ and that is stable under finite homotopy limits and retracts. We denote by $\ips_p$ the pro-category of $\is_{p-fin}$. We denote by $X\mapsto \Mat_p(X)$ the unique functor $\ips_{p}\to \is$ that preserves limits and that restricts to the inclusion $\is_{p-fin}\to\is$. The functor $\Mat_p$ has a left adjoint for formal reasons. It is denoted $X\mapsto \h{X}_p$ or just $X\mapsto \h{X}$ when there is no ambiguity and called $p$-completion. Note that this is different (but related) to what is sometimes called $p$-completion in homotopy theory namely the Bousfield localization with respect to the homology theory $\on{H}_*(-,\ZZ/p)$ or the Bousfield-Kan completion at the ring $\ZZ/p$. 

The $\infty$-category $\ips_p$ can be presented as the $\infty$-category underlying the model category $\pS_p$ constructed by Morel in \cite{morelensembles}. The Quillen pair
\begin{equation}\label{eq: quillen pair}
{(-)}:\S\leftrightarrows \pS_p:|-|
\end{equation}
presents the $\infty$-adjunction $(\h{(-)}_p,\Mat_p)$. We refer the reader to the next section for more details about this model structure.

We denote by $\is_{p-fin}^0$ the full subcategory of $\is_{p-fin}$ spanned by the connected spaces. The pro-category $\Pro(\is_{p-fin}^0)$ is a full subcategory of $\ips_p$. One can easily characterize the objects of $\is_{p-fin}^0$ in terms of their homotopy groups.

\begin{prop}\label{prop: characterization of p-finite spaces}
An object $X$ of $\is$ is in $\is_{p-fin}^0$ if and only if it is connected, truncated and all of its homotopy groups are finite $p$-groups.
\end{prop}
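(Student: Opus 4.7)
The plan is to verify the two implications separately. For the forward implication, consider the class $\mathcal{C}$ of objects $X\in\is$ that are truncated, have finite $\pi_0$, and such that for every basepoint $x\in X$ and every $n\geq 1$ the group $\pi_n(X,x)$ is a finite $p$-group. It is important to allow disconnected objects in $\mathcal{C}$, since $\is_{p-fin}$ is not generated by connected spaces and for instance contains $\ZZ/p\simeq\Omega K(\ZZ/p,1)$. Each $K(\ZZ/p,n)$ lies in $\mathcal{C}$, and the long exact sequences associated to a homotopy pullback show that $\mathcal{C}$ is closed under finite homotopy limits, using that extensions of finite $p$-groups are finite $p$-groups. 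Hence $\is_{p-fin}\subseteq\mathcal{C}$, and restricting to connected objects gives one direction.

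For the converse, the strategy is to run an induction along the Postnikov tower. The first step is to show that $K(\pi,n)\in\is_{p-fin}$ for every finite $p$-group $\pi$ and every $n\geq 1$, with $\pi$ abelian if $n\geq 2$. For abelian $\pi$, pick a composition series with quotients $\ZZ/p$ and iterate the fiber sequences
\[K(\ZZ/p,n)\to K(\pi,n)\to K(\pi/(\ZZ/p),n),\]
using closure of $\is_{p-fin}$ under finite limits. For $n=1$ and possibly non-abelian $\pi$, use that finite $p$-groups are nilpotent and therefore admit a central series with $\ZZ/p$-quotients, and iterate the fiber sequences $K(\ZZ/p,1)\to K(\pi,1)\to K(\pi/Z,1)$ coming from central extensions $1\to\ZZ/p\to\pi\to\pi/Z\to 1$.

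Now let $X$ be connected, truncated, with all $\pi_i(X)$ finite $p$-groups. Set $X_1=K(\pi_1 X,1)$, which lies in $\is_{p-fin}$ by the previous step, and inductively define the Postnikov stage $X_n$ as the homotopy fiber of the $k$-invariant $X_{n-1}\to K(\pi_n X,n+1)$. Both source and target of this $k$-invariant belong to $\is_{p-fin}$, so the fiber $X_n$ does too. Since $X$ is truncated the tower terminates with $X\simeq X_N$ for some $N$, and connectedness upgrades $X\in\is_{p-fin}$ to $X\in\is_{p-fin}^0$. The most delicate piece of the argument is the set-up of $\mathcal{C}$ in the forward direction, since ignoring the presence of disconnected objects would give an incorrect invariant; once that is handled, the Postnikov induction is routine.
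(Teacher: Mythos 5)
Your forward implication (that every connected object of $\is_{p-fin}$ is truncated with finite $p$-group homotopy groups) is correct and in fact more careful than the paper, which dismisses this direction as easy; the observation that the closure class $\mathcal{C}$ must admit disconnected objects is a good one.

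The converse direction, however, has a genuine gap at the step ``inductively define the Postnikov stage $X_n$ as the homotopy fiber of the $k$-invariant $X_{n-1}\to K(\pi_n X,n+1)$.'' Such a $k$-invariant, valued in an ordinary Eilenberg--MacLane space, exists only when the fibration $X_n\to X_{n-1}$ is \emph{principal}, i.e.\ when $\pi_1X$ acts trivially on $\pi_nX$; in general the $k$-invariant lives in cohomology with local coefficients and classifies a map to a twisted Eilenberg--MacLane space over $B\pi_1X$, whose fiber description does not hand you membership in $\is_{p-fin}$. The hypothesis ``all homotopy groups are finite $p$-groups'' does not force the action to be trivial: $\pi_1=\ZZ/p$ can act on $\pi_2=(\ZZ/p)^2$ through a unipotent matrix of order $p$. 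You correctly confronted the analogous issue in degree $1$ by using a central series of $\pi$, but the same phenomenon recurs in the interaction between $\pi_1$ and the higher $\pi_n$, and there it is not addressed. The repair is exactly the paper's argument: a finite $p$-group acting on a finite abelian $p$-group always acts nilpotently (the fixed points are nonzero by an orbit count, so the module has a finite filtration with trivial action on the subquotients), hence $X$ is nilpotent, and a nilpotent truncated space admits a \emph{principal refinement} of its Postnikov tower --- a finite tower of principal fibrations with fibers $K(\ZZ/p,n)$ --- from which membership in $\is_{p-fin}^0$ follows. The paper cites \cite[Proposition 7.3.4]{barneapro} for the nilpotence and \cite[Proposition V.6.1]{goersssimplicial} for the refinement; your proof needs these (or equivalent) inputs inserted before the Postnikov induction can run.
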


\begin{proof}
Assume that $X$ is connected truncated and all of its homotopy groups are finite $p$-groups. Then $X$ is nilpotent by \cite[Proposition 7.3.4.]{barneapro}, and by \cite[Proposition V.6.1]{goersssimplicial}, $X$ can be written as the limite of a finite tower of principal fibrations with structure group $K(\ZZ/p,n)$ and hence is in $\is_{p-fin}^0$. The converse is easy.
\end{proof}

Note that for $\kk$ a field of characteristic $p$, the functor $\CC(-,\kk)$ sends weak equivalences in $\pS_p$ to weak equivalences of $E_\infty$-algebras. Hence we have a well-defined functor $\CC(-,\kk):\ips_p\to (\icalg{\kk})\opp$.

\begin{theo}[Mandell, \cite{mandelleinfinity}]\label{prop: cochains is fully faithful}
For $\kk$ an algebraically closed field of characteristic $p$, the functor 
\[X\mapsto \CC(X,\kk)\]
induces a fully faithful embedding $\Pro(\is_{p-fin}^0)\to(\icalg{\kk})\opp$.
\end{theo}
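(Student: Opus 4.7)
The plan is to deduce the statement from Mandell's theorem for ordinary nilpotent $p$-complete spaces and then extend formally to pro-objects via a compactness argument.

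First, I would apply Mandell's theorem of \cite{mandelleinfinity} directly to $\is_{p-fin}^0$. By Proposition \ref{prop: characterization of p-finite spaces}, every object $Y$ of $\is_{p-fin}^0$ is connected, truncated and has finite $p$-group homotopy groups, so in particular is nilpotent, $p$-complete and of finite type over $\FF_p$; Mandell's theorem therefore gives natural equivalences
\[
\Map_{\is}(X, Y) \xrightarrow{\sim} \Map_{(\icalg{\kk})\opp}(\CC(X, \kk), \CC(Y, \kk))
\]
for all $X, Y \in \is_{p-fin}^0$ whenever $\kk$ is algebraically closed of characteristic $p$.

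To extend to $\Pro(\is_{p-fin}^0)$, write $X = \{X_i\}_{i \in I}$ and $Y = \{Y_j\}_{j \in J}$. The universal property of the pro-category gives
\[
\Map_{\Pro(\is_{p-fin}^0)}(X, Y) \simeq \lim_{j} \on{colim}_{i} \Map_{\is_{p-fin}^0}(X_i, Y_j),
\]
while by construction $\CC(X, \kk) = \on{colim}_{i} \CC(X_i, \kk)$ in $\icalg{\kk}$, so
\[
\Map_{(\icalg{\kk})\opp}(\CC(X, \kk), \CC(Y, \kk)) \simeq \lim_{j} \Map_{\icalg{\kk}}\bigl(\CC(Y_j, \kk),\, \on{colim}_{i} \CC(X_i, \kk)\bigr).
\]
After combining with the previous step termwise, these two expressions are canonically identified as soon as, for each fixed $Y_j \in \is_{p-fin}^0$, the functor $\Map_{\icalg{\kk}}(\CC(Y_j, \kk), -)$ commutes with the filtered colimit over $i$.

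The main obstacle is precisely this compactness of $\CC(Y_j, \kk)$ for $Y_j \in \is_{p-fin}^0$. I would establish it by decomposing $Y_j$ as a finite tower of principal fibrations with fibres of the form $K(\ZZ/p, n)$, which is possible by the same argument as in the proof of Proposition \ref{prop: characterization of p-finite spaces}; dually this presents $\CC(Y_j, \kk)$ as a finite iterated homotopy pushout in $\icalg{\kk}$ of the basic cochain algebras $\CC(K(\ZZ/p, n), \kk)$. For these Eilenberg-MacLane building blocks, Mandell's explicit computation identifies the cochain algebra with (essentially) the free $E_\infty$-algebra over $\kk$ on a single class in degree $n$, and a free algebra on a compact generator is compact in $\icalg{\kk}$ because the forgetful functor to chain complexes creates filtered colimits. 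Since compact objects are closed under finite homotopy colimits, the required commutation with filtered colimits follows, and this completes the pro-extension. The delicate point in practice is making this compactness argument rigorous in whichever model of $\icalg{\kk}$ one works with, so that cofibrancy, filtered colimits and the formula $\CC(\{X_i\}) = \on{colim}_i \CC(X_i)$ are simultaneously available.
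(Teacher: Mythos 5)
Your overall strategy coincides with the paper's: reduce the statement to (a) full faithfulness of $\CC(-,\kk)$ on $\is_{p-fin}^0$, which is Mandell's main theorem applied to connected, nilpotent, $H_*(-,\ZZ/p)$-local spaces of finite $p$-type, and (b) compactness of $\CC(Y,\kk)$ in $\icalg{\kk}$ for $Y\in\is_{p-fin}^0$, which is exactly what is needed to match the $\lim\on{colim}$ formula for pro-mapping spaces with the mapping spaces of the filtered-colimit cochain algebras. The reduction and the tower decomposition are the same as in the paper. However, your compactness argument has a gap at the step where you pass from the finite tower of principal fibrations on the space level to a finite iterated homotopy pushout on the cochain level. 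The functor $\CC(-,\kk)$ does not in general carry homotopy pullbacks of spaces to homotopy pushouts of $E_\infty$-algebras; this is precisely the question of strong convergence of the Eilenberg--Moore spectral sequence for each stage of the tower. The paper invokes Dwyer's strong convergence theorem at exactly this point, and without some such input the word ``dually'' is doing unjustified work.

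A second, more minor inaccuracy: Mandell's computation does not identify $\CC(K(\ZZ/p,n),\kk)$ with the free $E_\infty$-algebra $S_\kk(n)$ on a degree-$n$ class, even ``essentially''; the cohomology of $K(\ZZ/p,n)$ is the free \emph{unstable} algebra on one generator, which is much smaller than the homotopy of $S_\kk(n)$. What Mandell proves is that $\CC(K(\ZZ/p,n),\kk)$ sits in a pushout square as the cofiber, in $\icalg{\kk}$, of the self-map $\id-P_0$ of $S_\kk(n)$, i.e.\ it is obtained from the free algebra by forcing the operation $P_0$ to act as the identity. Your desired conclusion survives, since $S_\kk(n)$ is compact and compact objects are closed under finite colimits, but the intermediate claim as you state it is false and should be replaced by the pushout description.
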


\begin{proof}
This theorem is due to Mandell although we are using a slightly different language. By definition of the pro-category, it suffices to prove that $\CC(-,\kk)$ is fully faithful on  $\is_{p-fin}^0$ and takes its values in compact objects of $\icalg{\kk}$. Let us denote by $S_\kk(n)$ the free $E_\infty$-algebra on a generator of degree $n$. This is a compact object of the $\infty$-category $\icalg{\kk}$. By \cite[Theorem 6.2.]{mandelleinfinity}, there is a pushout square
\[\xymatrix{
S_\kk(n)\ar[r]^{\id-P_0}\ar[d]&S_\kk(n)\ar[d]\\
\kk\ar[r]&\CC(K(\ZZ/p,n),\kk)
}
\]
in the $\infty$-category $\icalg{\kk}$. This implies that $\CC(K(\ZZ/p,n),\kk)$ is compact.

For a general object $X$ in $\is_{p-fin}$, there exists a tower
\begin{equation}\label{tower}
X=X_n\to X_{n-1}\to\ldots X_0\to\ast
\end{equation}
in which each map $X_k\to X_{k-1}$ is a principal fibration with structure group $K(\ZZ/p,s)$ with $s\geq 1$. At each stage the Eilenberg-Moore spectral sequence converges by \cite{dwyerstrong} this implies that $\CC(X,\kk)$ can be obtained as a finite iterated pushout of compact $E_{\infty}$-algebras. Thus, $\CC(X,\kk)$ is a compact object of $\icalg{\kk}$.

It remains to prove that $\CC(-,\kk)$ is fully faithful when restricted to $\is^0_{p-fin}$. But objects of $\is^0_{p-fin}$ are nilpotent, connected, local with respect to $H_*(-,\ZZ/p)$ and of finite $p$-type, it follows by the main theorem of \cite{mandelleinfinity} that $\CC(-,\kk)$ is fully faithful when restricted to those spaces.
\end{proof}

\section{Model structures on pro-$p$-spaces and pro-$p$-groupoids}

In this section we give more details about the model category $\pS_p$ constructed by Morel that presents the $\infty$-category of pro-$p$-spaces $\ips_p$. We also construct another model category $\pG_p$ of pro-$p$-groupoids that is an essential tool in the proof of Theorem \ref{theo : main p}.

\subsection{Pro-$p$-spaces}

Recall that a profinite set is a topological space which can be expressed as a cofiltered limit of finite sets with the discrete topology. By Stone duality, they can be identified with the compact Hausdorff totally disconnected topological spaces. We denote by $\h{\S}$ the category of simplicial objects in profinite sets. The forgetful functor from profinite sets to sets has a left adjoint that sends a set $S$ to the inverse limit of all finite quotients of $S$. Applying these two functor levelwise, we get an adjunction 
\[\h{(-)}:\S\leftrightarrows \pS:|-|.\]

In \cite{morelensembles}, Morel constructs a model structure denoted $\pS_p$ on $\h{\S}$. The weak equivalences in this model structure are the maps that induce isomorphisms on $\ZZ/p$-cohomology, the cofibrations are the monomorphisms. Moreover, it follows from his proof that this model structure is fibrantly generated with generating fibrations the maps
\[K(\ZZ/p,n)\to \ast \;\;\textrm{and}\;\;L(\ZZ/p,n)\to K(\ZZ/p,n+1)\]
and with generating trivial fibrations the maps
\[L(\ZZ/p,n)\to\ast.\]
We refer the reader to \cite[Section 1.4]{morelensembles} for the relevant definitions. This model structure fits in a Quillen adjunction
\[\widehat{(-)}:\S\leftrightarrows \pS_p:|-|.\]
Together with Barnea and Harpaz, we prove in the last section of \cite{barneapro} that this Quillen adjunction presents the $\infty$-categorical adjunction
\[\widehat{(-)}:\is\leftrightarrows \ips_p:\Mat.\]

\subsection{Pro-$p$-groupoids }

Let $\pG$ be the pro-category of the category of finite groupoids (i.e. the category of groupoids with a finite set of morphisms). A finite group $G$ can be seen as a groupoid with a unique object. We denote by $\ast\sslash G$ that groupoid. More generally for $S$ a finite set with an action of $G$ on the right. we denote by $S\sslash G$ the groupoid whose set of objects $S$ and whose set of morphisms from $s$ to $s'$ is the set of elements $g$ of $G$ such that $s.g=s'$.
 
\begin{prop}\label{prop: model structure on pro-p groupoids}
There is a model structure on $\pG$ whose cofibrations are the maps with the left lifting property against $\ZZ/p\sslash\ZZ/p\to\ast$ and whose trivial cofibrations are the maps with the left lifting property against the maps $\ZZ/p\sslash\ZZ/p\to \ast\sslash\ZZ/p$ and $\ast\sslash\ZZ/p\to\ast$.
\end{prop}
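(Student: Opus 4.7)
The plan is to mimic Morel's construction of the model structure $\pS_p$ on pro-$p$-spaces, transposed from simplicial sets to groupoids. Pro-categories do not support Quillen's usual small object argument, but they admit a cosmall object argument using cofiltered limits, which makes \emph{fibrantly} generated model structures natural on pro-categories.

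First, I would set up the candidate classes of maps. A \emph{fibration} in $\pG$ is a retract of a morphism that can be presented, as a morphism in $\pG$, as a cofiltered limit of transfinite towers of iterated pullbacks of the two generators $\ZZ/p\sslash\ZZ/p\to\ast\sslash\ZZ/p$ and $\ast\sslash\ZZ/p\to\ast$. A \emph{trivial fibration} is defined analogously from the single generator $\ZZ/p\sslash\ZZ/p\to\ast$. Cofibrations and trivial cofibrations are defined by the left lifting properties in the statement. Weak equivalences are the maps that factor as a trivial cofibration followed by a trivial fibration; for the argument to succeed, one would want an intrinsic description, such as the maps that induce a bijection on $\pi_0$ and, on each path-component, an isomorphism of pro-$p$ completions of the automorphism groups, or equivalently the maps inducing isomorphisms on $H^*(-,M)$ for every finite $p$-primary local system $M$.

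Second, I would verify the axioms. The lifting axioms are built into the definitions once the two generators are confirmed to lie in the classes of (trivial) fibrations above. The factorization axioms come from the cosmall object argument: given $f\colon X\to Y$ in $\pG$, one constructs a factorization $X\to Z\to Y$ as a transfinite cofiltered limit, at each step attaching a pullback of one generator (respectively, one trivial generator) to kill an obstruction class in the relevant cohomology. The $2$-out-of-$3$ axiom requires the intrinsic characterization of weak equivalences, since it is not formal from the lifting setup.

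The main obstacle is exactly this identification of weak equivalences intrinsically, together with the factorization construction inside a category that a priori has only cofiltered limits. A cleaner route would be to invoke the Barnea-Schlank / Barnea-Harpaz framework of weak fibration categories from \cite{barneapro}: I would first exhibit the category of finite groupoids as a weak fibration category whose (trivial) fibrations are generated by the listed maps and whose weak equivalences are given by the intrinsic description above, then apply the general theorem to obtain a model structure on the pro-category $\pG$ with the desired (trivial) cofibrations. This approach absorbs the bulk of the technical verifications into an off-the-shelf result, reducing the remaining work to an elementary check of the weak fibration category axioms for finite groupoids, which is entirely combinatorial since everything in sight is finite.
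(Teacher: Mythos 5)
Your overall architecture is the right one: the model structure is fibrantly generated, the factorizations come from a cosmall object argument exploiting the cocompactness of finite groupoids against cofiltered limits, and the crux is indeed an intrinsic identification of the weak equivalences. The paper itself disposes of all of this in one line, by running verbatim (with $\ZZ/p$ in place of arbitrary finite groups) the construction of the model structure on profinite groupoids from \cite[Theorem 4.12]{horelprofinite}; your alternative via the weak fibration category machinery of \cite{barneapro} is the route that reference uses for $\pS_p$ and is plausible here too, though one would still have to check that it returns exactly the cofibrations prescribed in the statement.

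The genuine gap is your proposed description of the weak equivalences. The class you suggest --- bijections on $\pi_0$ inducing isomorphisms on pro-$p$ completions of automorphism groups, equivalently $H^*(-,M)$-isomorphisms for all finite $p$-primary local systems $M$ --- is strictly smaller than the class forced by the generating maps in the statement; the correct class (recorded in the proposition immediately following this one) consists of the maps inducing isomorphisms on $H^0(-,\ZZ/p)$ and $H^1(-,\ZZ/p)$ with constant $\ZZ/p$-coefficients only. Concretely, the map $\ast\sslash\ZZ/p^2\to\ast\sslash\ZZ/p$ induced by the surjection $q\colon\ZZ/p^2\to\ZZ/p$ has the left lifting property against $\ast\sslash\ZZ/p\to\ast$ (every homomorphism $\ZZ/p^2\to\ZZ/p$ factors through $q$) and against $\ZZ/p\sslash\ZZ/p\to\ast\sslash\ZZ/p$ (any square against this map forces the bottom homomorphism to be trivial, since $\ZZ/p\sslash\ZZ/p$ has trivial automorphism groups and $q$ is surjective, and the lift is then the constant functor at the object already chosen), so by the very definition in the statement it is a trivial cofibration and must be a weak equivalence. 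Yet it is not an isomorphism on pro-$p$ completions, and $H^1(-,\ZZ/p^2)$ takes it to the non-surjective map $\ZZ/p\to\ZZ/p^2$. With your class of weak equivalences the identity ``trivial cofibrations $=$ cofibrations that are weak equivalences'' fails, and with it the model category axioms. The fix is to take the coarser class; verifying that it satisfies two-out-of-three and is compatible with the lifting-defined classes is precisely the content of the argument the paper imports from \cite[Theorem 4.12]{horelprofinite}.
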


\begin{proof}
The proof that this model structure exists can be done exactly as in \cite[Theorem 4.12]{horelprofinite}. 
\end{proof}

From this proof, we also obtain a characterization of the weak equivalences.

\begin{prop}
The weak equivalences in $\pG_p$ are exactly the maps $f:C\to D$ such that the induced maps $H^0(D,\ZZ/p)\to H^0(C,\ZZ/p)$ and $H^1(D,\ZZ/p)\to H^1(C,\ZZ/p)$ are isomorphisms (see \cite[Definition 4.1 and 4.2]{horelprofinite} for the definition of these cohomology groups). 
\end{prop}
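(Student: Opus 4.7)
My plan is to adapt the argument of the analogous proposition in \cite{horelprofinite} (the characterization of weak equivalences in the profinite-groupoids model structure), since the only structural change in passing from finite groupoids to finite $p$-groupoids is that the obstruction theory is now controlled by cohomology with $\ZZ/p$-coefficients alone (rather than coefficients in all finite groups). I would let $\mathcal{W}$ denote the class of maps inducing isomorphisms on $H^0(-,\ZZ/p)$ and $H^1(-,\ZZ/p)$, and show that $\mathcal{W}$ coincides with the weak equivalences.

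For the forward direction, I would first note that $H^0(-,\ZZ/p)$ and $H^1(-,\ZZ/p)$ can be identified as sets of homotopy classes of maps into particular fibrant objects. Concretely, $H^0$ is detected by maps into the two-object discrete groupoid with value set $\ZZ/p$, while $H^1$ is detected by (conjugacy classes of) maps into $\ast\sslash\ZZ/p$. Each of these finite groupoids has the right lifting property against the two generating trivial cofibrations $\ZZ/p\sslash\ZZ/p\to\ast\sslash\ZZ/p$ and $\ast\sslash\ZZ/p\to\ast$ by inspection, hence is fibrant. Since mapping spaces out of fibrant objects send weak equivalences to bijections, this shows that every weak equivalence is in $\mathcal{W}$.

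For the converse, suppose $f:C\to D$ lies in $\mathcal{W}$. Using the small object argument set up in the proof of \cite[Theorem 4.12]{horelprofinite} (transferred to the present model structure), I would factor $f$ as a trivial cofibration $j:C\to C'$ followed by a fibration $p:C'\to D$. The forward direction applied to $j$ gives that $j$ induces isomorphisms on $H^0$ and $H^1$, so by two-out-of-three $p$ also induces such isomorphisms. It then remains to show that a fibration inducing isomorphisms on $H^0$ and $H^1$ is a trivial fibration, i.e.\ has the right lifting property against $\ZZ/p\sslash\ZZ/p\to\ast$. A lifting problem against this map produces, after taking connected components, an obstruction class lying in $H^0$, and after analyzing the automorphisms of each component, a further obstruction in $H^1$; the assumption that $p$ induces isomorphisms on both groups forces these obstructions to vanish and produces the required lift.

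The main obstacle will be this last step: translating the cohomological condition into the existence of lifts. It is essentially an obstruction-theory calculation for pro-$p$-groupoids, but one has to be careful with conjugation actions and basepoints on each component, and then pass from finite groupoids to general pro-objects via a cofiltered-limit argument analogous to the one in \cite[Proposition 4.13]{horelprofinite}.
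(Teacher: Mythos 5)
Your overall strategy (represent $H^0$ and $H^1$ by fibrant objects for one implication, then factor and run obstruction theory for the other) is a reasonable shape, but two of its load-bearing steps rest on a misreading of how this model structure is generated. Note also that the paper itself sidesteps the whole question: following Morel's construction as in the cited Theorem 4.12 of the profinite paper, the weak equivalences are \emph{defined} cohomologically from the outset and it is the lifting-theoretic description of the (trivial) cofibrations that gets derived, so the proposition is read off from the construction rather than proved from the lifting properties as you attempt.

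The concrete problems are these. First, the maps $\ZZ/p\sslash\ZZ/p\to\ast\sslash\ZZ/p$ and $\ast\sslash\ZZ/p\to\ast$ are not generating trivial cofibrations: they are the maps that \emph{cogenerate} the trivial cofibrations, which are by definition the maps with the \emph{left} lifting property against them. The structure is fibrantly generated, so fibrancy is not tested by checking the right lifting property against these two maps; indeed $\ast\sslash\ZZ/p$ fails to have the right lifting property against $\ast\sslash\ZZ/p\to\ast$ (the identity of $\ast\sslash\ZZ/p$ does not factor through the point), yet it is fibrant, precisely because $\ast\sslash\ZZ/p\to\ast$ lies by construction in the class of fibrations it cogenerates. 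So your fibrancy check is conceptually backwards and false as stated for one of your two representing objects. Second, and more seriously, ``trivial fibration'' does not mean ``right lifting property against $\ZZ/p\sslash\ZZ/p\to\ast$.'' The cofibrations are the maps with the \emph{left} lifting property against $\ZZ/p\sslash\ZZ/p\to\ast$ (equivalently, as the paper notes, the maps injective on objects), and the trivial fibrations are the maps with the right lifting property against all of \emph{those}; since $\ZZ/p\sslash\ZZ/p\to\ast$ is not itself injective on objects, it is not a cofibration, and your final obstruction-theoretic step is aimed at the wrong lifting problems. What you would actually need is that a fibration inducing isomorphisms on $H^0(-,\ZZ/p)$ and $H^1(-,\ZZ/p)$ has the right lifting property against every injective-on-objects map; that step, which is the real content, is only gestured at.
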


This model structure $\pG_p$ fits in a Quillen adjunction
\[\pi:\pS_p\leftrightarrows\pG_p:B.\]
Here the right adjoint $B$ is the obvious variant of the classifying space functor for profinite groupoids. This fact can easily be deduced from the fact that $B$ sends the generating fibrations and trivial fibrations to generating fibrations and trivial fibrations. Now, we also have the following:

\begin{prop}
The trivial fibrations in $\pG_p$ coincide with the trivial fibrations in $\pG$ with the model structure constructed in \cite[Theorem 4.12]{horelprofinite}.
\end{prop}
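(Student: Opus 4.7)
The plan exploits the general fact that in any model category, the class of trivial fibrations is the right orthogonal complement of the class of cofibrations (and, conversely, the cofibrations are the left orthogonal complement of the trivial fibrations). Thus, to show that two model structures on the same underlying category $\pG$ share the same trivial fibrations, it is equivalent to show that they share the same cofibrations. I would reduce the proposition to this cofibration-comparison.

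By the definition of $\pG_p$ recalled in Proposition \ref{prop: model structure on pro-p groupoids}, a map is a cofibration iff it has the LLP against the single map $\ZZ/p\sslash\ZZ/p\to\ast$. The first step is to inspect the proof of \cite[Theorem 4.12]{horelprofinite} to extract the set of generating trivial fibrations used there. If that set is exactly (or at least LLP-equivalent to) $\{\ZZ/p\sslash\ZZ/p\to\ast\}$, the proposition is immediate: the two model structures then differ only in their generating fibrations, which control the weak equivalences and fibrations but not the cofibrations, consistently with the expectation that the $p$-local and profinite variants agree on the cofibration side of the weak factorization system.

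The main potential obstacle is the scenario in which the cofibrations in the horelprofinite model structure are defined via LLP against a strictly larger family, such as $G\sslash G\to\ast$ for $G$ ranging over all finite $p$-groups. In that situation, the content of the proof becomes showing that LLP against $\ZZ/p\sslash\ZZ/p\to\ast$ already implies LLP against each $G\sslash G\to\ast$. The standard tool for this reduction is the central series of a finite $p$-group, which presents $G$ as an iterated extension of copies of $\ZZ/p$. Combined with a routine obstruction-theoretic argument applied levelwise in the pro-category (choosing a compatible cofinal reindexing so that the pro-structures align), this decomposes a lifting problem against $G\sslash G\to\ast$ into a finite sequence of lifting problems against $\ZZ/p\sslash\ZZ/p\to\ast$, each solvable by hypothesis. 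Once the cofibrations are matched, the equality of trivial fibrations follows by the orthogonality observation of the first paragraph.
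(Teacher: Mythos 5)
Your reduction in the first paragraph is sound: two model structures on the same category have the same trivial fibrations if and only if they have the same cofibrations, and the comparison does come down to checking that each generating trivial fibration of one structure is a trivial fibration of the other. The easy inclusion (trivial fibrations of $\pG_p$ are trivial fibrations of $\pG$) is also as you suspect, since $\ZZ/p\sslash\ZZ/p\to\ast$ belongs to the generating family of $\pG$. The gap is in your third paragraph, where you guess both the shape of the generating family of $\pG$ and the tool for the nontrivial inclusion. The generating trivial fibrations of the profinite model structure of \cite[Theorem 4.12]{horelprofinite} are (up to the reformulation via \cite[Lemma 4.13]{horelprofinite}) the maps $\on{Codisc}(S)\to\ast$ for $S$ an \emph{arbitrary} finite set, equivalently $G\sslash G\to\ast$ for $G$ an arbitrary finite group --- not only finite $p$-groups. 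A central-series induction is therefore unavailable: a group of order prime to $p$ has no subquotient isomorphic to $\ZZ/p$, so your decomposition never gets started for, say, $S=\ZZ/q$ with $q\neq p$. Moreover, even where it applies, the central series is the wrong invariant here: $G\sslash G$ is the codiscrete groupoid on the underlying \emph{set} of $G$ (any two objects are uniquely isomorphic), so the group structure, and with it any obstruction theory for extensions, is invisible; what matters is only the cardinality of $S$.

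The argument the paper uses instead is elementary and purely set-theoretic. Any nonempty finite set $S$ is a retract of $(\ZZ/p)^n$ for $n$ large enough, and $\on{Codisc}(-)$ carries products of sets to products of groupoids, so $\on{Codisc}((\ZZ/p)^n)\to\ast$ is the $n$-fold product of $\ZZ/p\sslash\ZZ/p\to\ast$ and $\on{Codisc}(S)\to\ast$ is a retract of it. Since trivial fibrations (maps with the right lifting property against the cofibrations of $\pG_p$) are closed under products over the terminal object and under retracts, each $\on{Codisc}(S)\to\ast$ is a trivial fibration in $\pG_p$, which is exactly what your reduction requires. If you replace your central-series step by this retract-of-a-power argument, your proof goes through.
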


\begin{proof}
Since the generating trivial fibrations in $\pG_p$ are trivial fibrations in $\pG$, the trivial fibrations of $\pG_p$ are a subclass of those of $\pG$. In order to prove the converse, it suffices, using \cite[Lemma 4.13]{horelprofinite}, to check that the maps of the form $\on{Codisc}(S)\to\ast$ are trivial fibrations for $S$ a finite set. Any finite set is a retract of the set $(\ZZ/p)^n$ for some $n$. It follows that the map $\on{Codisc}(S)\to\ast$ is a retract of the map $\on{Codisc}((\ZZ/p)^n)\to\ast$. The latter map is isomorphic to the $n$-fold product of the map $\ZZ/p\sslash\ZZ/p\to\ast$ and hence is a trivial fibration in $\pG_p$.
\end{proof}

As a corollary of this proposition, we find that the cofibrations in $\pG_p$ are the same as the cofibrations in $\pG$, that is the maps that are injective on objects. Since there are more weak equivalences in $\pG_p$ that in $\pG$, the model structure $\pG_p$ is a Bousfield localization of $\pG$.

\section{Weak operads}\label{section : weak operads}

There exists a small category $\Psi\opp$ with finite products such that the category of operads in $\cat{Set}$ is equivalent to the category of product preserving functors $\Psi\opp\to\cat{Set}$. To construct this category, we first introduce some notation. We consider infinite sequences $\mathbf{n}=(n_1,\ldots,n_k,\ldots)$ of nonnegative integers, with $n_i=0$ for all but finitely many $i$. To such a sequence, we associate the operad $F(\mathbf{n})$ which corepresents the following functor from operads to sets 
\[P\mapsto \prod_{i=0}^\infty P(i)^{n_i}.\]
We define $\Psi$ to be the full subcategory of $\op\cat{Set}$ spanned by the operads $F(\mathbf{n})$. We observe that in the category $\Psi\opp$, there is an isomorphism
\[F(\mathbf{n})\times F(\mathbf{m})\cong F(\mathbf{n}+\mathbf{m}),\]
where the sum $\mathbf{n}+\mathbf{m}$ is componentwise. It follows that the category $\Psi\opp$ has finite products.

We denote by $N^\Psi$ the fully faithful inclusion functor from the category of operads in $\cat{Set}$ to the category of presheaves over $\Psi$. Observe that the essential image of this functor is precisely the category of functors $\Psi\opp\to\cat{Set}$ that preserve finite products. More generally, for a category $\cat{C}$ with products, we make the following definition:

\begin{defi}
Let $\cat{C}$ be a category with products. An operad in $\cat{C}$ is a product preserving functor $\Psi\opp\to\cat{C}$. 
\end{defi}

We denote by $N^\Psi$ the inclusion functor $\op\cat{C}\to\cat{C}^{\Psi\opp}$ and we call it the operadic nerve. Now assume that $\cat{C}$ is a model category.

\begin{defi}
We say that a diagram $X:\Psi\opp\to\cat{C}$ is a weak operad if it is objectwise fibrant and for any pair of objects $(\psi,\tau)$ in $\Psi$, the map
\[X(\psi\times\tau)\to X(\psi)\times X(\tau)\]
is a weak equivalence. We denote by $\wop\cat{C}$ the category of weak operads in $\cat{C}$.
\end{defi}

Given an $\infty$-category $\icat{C}$ with products, we can define the $\infty$-category $\icat{WOpC}$ of weak operads in $\icat{C}$ as the $\infty$-category of product preserving functors $\Psi\opp\to\icat{C}$.

\begin{prop}\label{prop: weak operads infinity categories}
Let $\cat{C}$ be a combinatorial or cocombinatorial model category and $\icat{C}$ be the underlying $\infty$-category, then the $\infty$-categorical localization $\icat{N}(\wop\cat{C})$ of $\wop\cat{C}$ at the objectwise weak equivalences is equivalent to $\icat{WOpC}$.
\end{prop}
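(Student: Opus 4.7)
The plan is to realize $\wop\cat{C}$ as the full subcategory of fibrant objects in a suitable left Bousfield localization of the projective model structure on $\cat{C}^{\Psi\opp}$, and then invoke the standard translation between model-categorical and $\infty$-categorical localizations.

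First I equip $\cat{C}^{\Psi\opp}$ with the projective model structure. For $\cat{C}$ combinatorial this model structure exists and is itself combinatorial, and its underlying $\infty$-category is the functor $\infty$-category $\Fun(\Psi\opp,\icat{C})$ by standard results of \cite{luriehigheralgebra}. The fibrant objects are exactly the objectwise fibrant diagrams. In the cocombinatorial case one argues dually, working with the injective model structure on $(\cat{C}\opp)^{\Psi}$ and performing a right Bousfield localization, or simply transferring the proof through the opposite equivalence $\icat{WOpC}\simeq (\icat{WOp}(\icat{C}\opp))\opp$.

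Next I perform a left Bousfield localization of the projective model structure at a small set $S$ of maps whose $S$-local objects are precisely those objectwise fibrant diagrams $X:\Psi\opp\to\cat{C}$ for which the Segal map $X(\psi\times\tau)\to X(\psi)\times X(\tau)$ is a weak equivalence for every pair $(\psi,\tau)$ in $\Psi$. Such a set exists: one takes cofibrant resolutions of the canonical comparison maps $F_{\psi}\sqcup F_{\tau}\to F_{\psi\times\tau}$ between representable diagrams, tensored with a generating set of cofibrations of $\cat{C}$, and invokes Smith's existence theorem for left Bousfield localizations of combinatorial model categories. In the resulting model structure the fibrant objects are precisely the weak operads, and the underlying $\infty$-category is the full subcategory of $\Fun(\Psi\opp,\icat{C})$ spanned by the product-preserving functors, which by definition is $\icat{WOpC}$.

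Finally, the underlying $\infty$-category of any combinatorial model category is equivalent to the Dwyer-Kan localization of its full subcategory of fibrant objects at the weak equivalences. Moreover, in a left Bousfield localization, two fibrant objects are weakly equivalent in the localized structure if and only if they are connected by an objectwise weak equivalence in the original structure. Combining these two observations for our localized model structure yields the desired equivalence $\icat{N}(\wop\cat{C})\simeq\icat{WOpC}$. The main technical obstacle will be the construction of the localizing set $S$ and the verification that its local objects coincide with the weak operads; this is a standard application of tensoring projective cofibrations with representables, but will require some care when $\cat{C}$ is not simplicial, in which case one replaces the internal mapping space with a framing.
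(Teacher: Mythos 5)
Your route is genuinely different from the paper's: the paper never localizes a model structure, but instead compares $\icat{N}(\wop\cat{C})$ directly with $\icat{WOp}\icat{N}(\cat{C}^f)$ inside the commutative square relating $\icat{N}\Fun(\Psi\opp,\cat{C})$ to $\Fun(\Psi\opp,\icat{N}(\cat{C}^f))$, getting full faithfulness from the hammock localization and essential surjectivity by strictifying a product-preserving functor to an honest diagram. Your Bousfield-localization strategy is a reasonable alternative in principle, but as written it has two genuine gaps.

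First, the existence theorem you invoke for left Bousfield localizations of combinatorial model categories requires \emph{left properness}, and neither a general combinatorial $\cat{C}$ nor the projective structure on $\cat{C}^{\Psi\opp}$ (which inherits left properness only from $\cat{C}$) satisfies this hypothesis; model categories of algebras, which are exactly the kind of input this proposition is meant for, typically do not. Without the localized model structure in hand, your identification of its fibrant objects with $\wop\cat{C}$ and of its weak equivalences between fibrant objects with the objectwise ones has nothing to refer to. One can instead pass to the reflective subcategory of $S$-local objects purely $\infty$-categorically (presentability of $\Fun(\Psi\opp,\icat{C})$ suffices for that), but then you still owe the comparison of $\icat{N}(\wop\cat{C})$ with that subcategory, which is where the actual content lies and which is essentially what the paper's argument supplies. (A smaller point in the same step: for your set $S$ to detect the Segal condition you need a set of objects of $\cat{C}$ whose derived mapping spaces jointly detect weak equivalences; the domains and codomains of the generating cofibrations do not do this in general, though presentability does provide such a set.)

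Second, the cocombinatorial case is mishandled. The equivalence $\icat{WOpC}\simeq(\icat{WOp}(\icat{C}\opp))\opp$ you propose to transfer along is false: products in $\icat{C}$ become coproducts in $\icat{C}\opp$, so the opposite of a weak operad in $\icat{C}$ is a diagram $\Psi\to\icat{C}\opp$ taking coproducts in $\Psi$ to coproducts, not a weak operad in $\icat{C}\opp$. The alternative via right Bousfield localization also does not match the definition, since a right Bousfield localization modifies the cofibrant objects and leaves the fibrant ones untouched, whereas $\wop\cat{C}$ is cut out by objectwise fibrancy together with a condition on products of fibrant objects. This case cannot be dropped: it is the one used for Hopf cooperads, where $\icat{C}=(\icalg{\kk})\opp$. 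By contrast, the paper's argument treats both cases uniformly because its only model-categorical inputs are the rectification equivalence $\icat{N}\Fun(\Psi\opp,\cat{C})\simeq\Fun(\Psi\opp,\icat{N}(\cat{C}))$ and the fact that a product of fibrant objects is an $\infty$-categorical product.
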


\begin{proof}
Let $\cat{C}^f$ be the full subcategory of $\cat{C}$ spanned by fibrant objects. We will construct an equivalence $\icat{N}(\wop\cat{C})\to \icat{WOp}\icat{N}(\cat{C}^f)$. Note that if $X$ and $Y$ are two fibrant objects of $\cat{C}$, the diagram
\[X\leftarrow X\times Y\to Y\]
induces a limit cone in the $\infty$-category $\icat{N}\cat{C}$. It follows that the obvious map
\[\icat{N}(\wop\cat{C})\to \Fun(\Psi\opp,\icat{N}(\cat{C}^f))\]
factors through $\icat{WOp}\icat{N}(\cat{C}^f)$

Hence, we have a commutative diagram
\[\xymatrix{
\icat{N}(\wop\cat{C})\ar[r]\ar[d]&\icat{WOp}\icat{N}(\cat{C}^f)\ar[d]\\
\icat{N}\Fun(\Psi\opp,\cat{C}^f)\ar[r]\ar[d]&\Fun(\Psi\opp,\icat{N}(\cat{C}^f))\ar[d]\\
\icat{N}\Fun(\Psi\opp,\cat{C})\ar[r]&\Fun(\Psi\opp,\icat{N}(\cat{C}))
}
\]
and we wish to show that the top horizontal map is an equivalence of $\infty$-categories. A standard argument with combinatorial model categories implies that all the maps in the bottom square are equivalences. By definition the map $\icat{WOp}\icat{N}(\cat{C}^f)\to \Fun(\Psi\opp,\icat{N}(\cat{C}^f))$ is fully faithful. The map $\icat{N}(\wop\cat{C}^f)\to \icat{N}\Fun(\Psi\opp,\cat{C}^f)$ is also fully faithful as can be seen easily using Dwyer-Kan's hammock localization model for the $\infty$-categorical localization. It follows that the top horizontal map is fully faithful. 

On the other hand, let $F$ be an object in $\icat{WOp}\icat{N}(\cat{C}^f)$, its image in the category $\Fun(\Psi\opp,\icat{N}(\cat{C}^f))$ is equivalent to a strict functor $G:\Psi\opp\to\cat{C}^f$ since the middle horizontal map is an equivalence. But the fact that $F$ is in $\icat{WOp}\icat{N}(\cat{C}^f)$, it follows immediately that $G$ is in $\wop\cat{C}$. Hence the top horizontal map is essentially surjective.
\end{proof}

\begin{rem}
Instead of the category $\Psi$, we could have used the dendroidal category $\Omega$ as in \cite[Section 4]{boavidaoperads} in order to model homotopy coherent operads. 
\end{rem}

\section{Main theorem in the unstable case}

The pro-$p$ completion of spaces commutes with products. Indeed given two spaces $X$ and $Y$, there is an obvious map
\[\widehat{X\times Y}\to \widehat{X}\times\widehat{Y}\]
which is a weak equivalence as can be seen by computing the cohomology of both sides.

It follows that a levelwise application of pro-$p$-completion induces a functor
\[\h{(-)}:\icat{WOpS}\to \icat{WOp}\ips_p\]
that is the left adjoint of the functor
\[\Mat:\icat{WOp}\ips_p\to\icat{WOp}\is\]
obtained by objectwise application of the functor $\Mat$ (since $\Mat$ is a right adjoint it preserves products and hence weak operads).

We denote by $\od$ the operad in $\S$ given by applying the functor $\on{Sing}$ to the topological operad of little $2$-disks. The goal of this section is to prove the following theorem.

\begin{theo}\label{theo : main p}
The monoid of endomorphisms of $\h{N^\Psi\od}$ in the homotopy category of $\icat{WOp}\ips_p$ is $\pugt_p$.
\end{theo}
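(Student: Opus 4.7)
The plan is to translate the problem from pro-$p$-spaces to pro-$p$-groupoids via the Quillen adjunction $\pi:\pS_p\leftrightarrows\pG_p:B$, reduce to a combinatorial model using the operad of parenthesized braids $\pab$, and then identify the resulting endomorphism monoid with $\pugt_p$ by a Drinfel'd-style analysis. This parallels the profinite strategy carried out in \cite{horelprofinite}.

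Concretely, I would first apply the adjunction levelwise to weak operads: both $\pi$ and $B$ preserve finite products, so they descend to functors between categories of weak operads. Each space $\od(n)$ is a $K(PBr_n,1)$, hence its pro-$p$-completion is aspherical in the pro-$p$ sense, so the counit $B\pi(\h{N^\Psi\od})\to\h{N^\Psi\od}$ is a weak equivalence in $\icat{WOp}\ips_p$ and $B$ induces an isomorphism on derived endomorphism monoids. Moreover $\pi(\h{N^\Psi\od})$ is equivalent to $\h{N^\Psi\pab}_p$, the levelwise pro-$p$-completion of the operad of parenthesized braids, because $B\pab\simeq \od$ in the homotopy category of weak operads in spaces and the left adjoints $\pi$ and $\h{(-)}_p$ commute.

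One is then reduced to computing endomorphisms of $\h{N^\Psi\pab}_p$ in $\on{Ho}\icat{WOp}\pG_p$. Such an endomorphism should be determined by its restriction to arities $2$ and $3$: arity $2$ gives $\pab(2)\simeq B\ZZ$ whose pro-$p$-completion is $B\h{\ZZ}_p$, recording a scalar $\lambda\in\h{\ZZ}_p$; arity $3$ records an ``associator'' $f\in\h{F_2}_p$ via the identification of $PBr_3$ modulo its center with $F_2$. The operad axioms impose the pentagon and two hexagon equations in arities $4$ and $3$ respectively, and these are precisely the Drinfel'd relations cutting $\pugt_p$ out of $\End(\h{F_2}_p)$; conversely any pair $(\lambda,f)\in\pugt_p$ defines an endomorphism of the groupoid operad $\h{N^\Psi\pab}_p$.

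The main obstacle will be to make this informal analysis rigorous in the weak operad setting: one must prove a rigidity statement showing that the derived mapping space from $\h{N^\Psi\pab}_p$ to itself in $\icat{WOp}\pG_p$ is homotopically discrete, so that the pair $(\lambda,f)$ really captures the full homotopy type of the space of self-maps rather than only $\pi_0$. I would argue this by combining the analogous profinite result from \cite{horelprofinite} with the fact, observed at the end of the preceding section, that $\pG_p$ is a left Bousfield localization of $\pG$; mapping spaces in $\pG_p$ between fibrant objects are then unions of components of the corresponding mapping spaces in $\pG$. A secondary technical point is to exhibit a sufficiently cofibrant replacement of the operad of parenthesized braids within the pro-$p$ model structure and within the $\Psi$-diagram formalism for weak operads, which should adapt from the profinite case with only minor modifications.
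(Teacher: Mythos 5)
Your overall strategy coincides with the paper's: reduce to the operad of parenthesized braids in pro-$p$-groupoids via the adjunction $\pi\dashv B$, identify the endomorphism monoid there with $\pugt_p$ by the Drinfel'd-style analysis in arities $2$ and $3$ (pentagon and hexagons), and then handle the homotopy-coherence issue of passing from strict operad maps to endomorphisms in the homotopy category of weak operads. There is, however, one genuine gap. You write that since $\od(n)$ is a $K(PBr_n,1)$ ``its pro-$p$-completion is aspherical in the pro-$p$ sense,'' and deduce that the counit $B\pi(\h{N^\Psi\od})\to\h{N^\Psi\od}$ is an equivalence. This does not follow from asphericity alone: weak equivalences in $\pS_p$ are $\ZZ/p$-cohomology isomorphisms, so the comparison map $\h{K(G,1)}_p\to B\h{G}_p$ is an equivalence if and only if $H^*(\h{G}_p,\ZZ/p)\to H^*(G,\ZZ/p)$ is an isomorphism, i.e.\ if and only if $G$ is $p$-good in the sense of Serre, which fails for general finitely generated groups. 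The substantive input, which the paper supplies, is that the pure braid groups are good (\cite[Corollary 5.11]{horelprofinite}) and hence $p$-good; only then does the groupoid-to-space comparison $\End_{\on{Ho}\cat{WOp}\pG_p}(N^\Psi\h{\pab})\cong\End_{\on{Ho}\cat{WOp}\pS_p}(\h{N^{\Psi}B\pab})$ go through.

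A secondary caveat concerns your rigidity step. The localization relating $\pG_p$ to $\pG$ compares two model structures on the same underlying category, but the object you must control is the pro-$p$ completion $\h{\pab}_p$, not the profinite completion $\h{\pab}$; these are different objects of $\pG$, so the endomorphism space of the former in $\pG_p$ is not a union of components of the endomorphism space of the latter in $\pG$, and the profinite discreteness result does not transfer by a localization formality. The paper instead reruns the profinite arguments (\cite[Theorem 7.8, Propositions 8.1 and 8.3]{horelprofinite}) verbatim in the pro-$p$ model structures, with $p$-goodness replacing goodness. Finally, the passage from $\on{Ho}\cat{WOp}\pS_p$ to $\on{Ho}\icat{WOp}\ips_p$ requires the comparison of model-categorical and $\infty$-categorical weak operads (Proposition \ref{prop: weak operads infinity categories}), which your write-up leaves implicit.
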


We follow the approach of \cite{horelprofinite}. Our starting point is the model category $\pS_p$ constructed by Morel and its groupoid companion $\pG_p$ constructed in Proposition \ref{prop: model structure on pro-p groupoids}.

We have the operad $\pab$ in the category of groupoids which is a groupoid model of $\od$ (see \cite[Section 6]{horelprofinite}). More precisely, there is a zig-zag of weak equivalences of operads in $\S$ connecting $B\pab$ and $\od$ (where $B$ denotes the nerve functor).

The levelwise pro-$p$ completion of $\pab$ is an operad $\h{\pab}_p$ in $\pG_p$. Our definition of the monoid $\pugt_p$ will be the monoid of endomorphisms of $\h{\pab}_p$ which induce the identity on objects. Note that this is not the standard definition, the monoid $\pugt_p$ is usually defined as the set of elements of $\ZZ_p\times \h{(F_2)}_p$ satisfying certain equations (see for instance \cite[Paragraph 4]{drinfeldquasi}). We refer the reader to \cite[Section 11.1.]{fressehomotopy} for a comparison of the two definitions. Note that Fresse focuses on the prounipotent case but the proof works exactly the same in the pro-$p$ case. We define the group $\pgt_p$ as the group of units of $\pugt_p$.

Let $I[1]$ be the groupoid completion of the category $[1]$. This is the groupoid that corepresents the functor that sends a groupoid to the sets of its morphisms. The endofunctor $C\mapsto C^{I[1]}$ in the category $\pG_p$ preserves products and hence induces an endofunctor $\oper{O}\mapsto\oper{O}^{I[1]}$ on the category of operads in $\pG_p$. Using this functor, we can define the notion of a homotopy between two maps in $\op\pG_p$. Namely, a homotopy between $f$ and $g$ two maps $\oper{P}\to\oper{Q}$ is a map $\oper{P}\to\oper{Q}^{I[1]}$ whose evaluation at the two objects of $I[1]$ is $f$ and $g$. We can form the category $\pi\op\pG_p$ whose objects are the operads in $\pG_p$ and morphisms are morphisms in $\op\pG_p$ modulo the homotopy relation.  

Our first step is the following proposition.

\begin{prop}
The map $\pugt_p\to\on{End}(\h{\pab})$ induces an isomorphism
\[\pugt_p\to\on{End}_{\pi\op\pG_p}(\h{\pab}).\]
\end{prop}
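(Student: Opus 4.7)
The plan is to follow the strategy of \cite[Section 6]{horelprofinite} adapted to the pro-$p$ setting, using the path object $\oper{Q}^{I[1]}$ to describe homotopies concretely. Under this description, a homotopy in $\op\pG_p$ between two strict operad morphisms $f, g : \oper{P} \to \oper{Q}$ is the same data as an \emph{operadic natural isomorphism} $H = (H_n)$, that is, a $\Sigma$-equivariant family of natural isomorphisms compatible with the unit and with operadic composition ($H(x \circ_i y) = H(x) \circ_i H(y)$).

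For surjectivity, I would show that every strict endomorphism of $\h{\pab}$ is operadically homotopic to one that is the identity on objects. The object operad of $\pab$ is the free symmetric operad on a single binary generator (parenthesized permutations), so a strict endomorphism $f$ is determined on objects by its value on $(12) \in \pab(2)$, which must be either $(12)$ or $(21)$. A choice of morphism $f((12)) \to (12)$ in the connected groupoid $\pab(2)$ extends, using operadic composition and $\Sigma$-equivariance, to an operadic natural isomorphism $H : f \Rightarrow g$ with $g$ the identity on objects; that the associated $g$ is again an operad morphism is a standard check using the interchange law for operadic composition.

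For injectivity, let $f, g$ be identity-on-objects endomorphisms and $H : f \Rightarrow g$ an operadic natural isomorphism, so $H_n(x) \in \Aut_{\h{\pab}(n)}(x) = \h{P_n}_p$. Operadic compatibility with the unit gives $H_1 = \id$, and together with $\Sigma$-equivariance this determines $H$ from the single element $a := H_2((12)) \in \h{P_2}_p \cong \ZZ_p$. In arity three, a direct calculation using the centrality of $C := A_{12}A_{13}A_{23}$ in $P_3$ and the identities $A_{13}A_{23} = CA_{12}^{-1}$, $A_{12}A_{13} = CA_{23}^{-1}$ yields
\[
H_3((12)3) = a \circ_1 a = C^a = a \circ_2 a = H_3(1(23))
\]
in $\h{P_3}_p$. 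Naturality of $H$ with respect to the associator $\alpha : (12)3 \to 1(23)$, whose underlying braid is trivial, then forces $g(\alpha) = C^a \cdot f(\alpha) \cdot C^{-a} = f(\alpha)$ by centrality of $C$. Similarly, naturality with respect to the braiding $\beta : (12) \to (21)$, combined with the fact that $\h{B_2}_p$ is abelian, gives $g(\beta) = f(\beta)$. Since $\pab$ is generated as an operad in groupoids by $\alpha$ and $\beta$, it follows that $f = g$.

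The main obstacle is the arity-three calculation $a \circ_i a = C^a$, since it uses in an essential way the centrality of the full twist in the pro-$p$-completed pure braid group. This last point is no more delicate than in the profinite setting of \cite{horelprofinite}: the centrality of $C$ in $P_3$ passes to $\h{P_3}_p$ because pro-$p$-completion is a functor on groups, and the cabling formulas that encode $\circ_i$ on pure braids are preserved. The rest is bookkeeping with generators and relations.
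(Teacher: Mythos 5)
Your argument is correct and is essentially the paper's own proof: the paper simply defers to \cite[Theorem 7.8]{horelprofinite}, and what you have written is precisely that argument (reduction to identity-on-objects endomorphisms via freeness of the object operad, then rigidity of homotopies via the cabling formula $a\circ_i a = C^{\lambda}$ and centrality of the full twist), transported to the pro-$p$ setting where, as you note, it goes through unchanged. The only cosmetic imprecisions are the notation $C^a$ for the $\lambda$-th power of the full twist when $a=A_{12}^{\lambda}$, and the implicit use of density of $\pab$ in $\h{\pab}_p$ to conclude $f=g$ from agreement on the generators $\alpha$ and $\beta$.
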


\begin{proof}
This is done exactly as \cite[Theorem 7.8.]{horelprofinite}.
\end{proof}

The levelwise application of profinite completion induces a weak equivalences preserving functor:
\[\h{(-)}:\wop\G\to\wop\pG_p.\]

\begin{prop}\label{prop-main theorem groupoids}
The action of $\pugt_p$ on $\h{\pab}_p$ induces an isomorphism
\[\pugt_p\cong\on{End}_{\on{Ho}\cat{WOp}\pG_p}(N^\Psi\h{\pab}).\]
\end{prop}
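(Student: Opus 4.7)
The strategy is to leverage the preceding proposition by comparing strict and weak operads through the nerve $N^\Psi$. Combined with the isomorphism $\pugt_p\cong\on{End}_{\pi\op\pG_p}(\h{\pab})$, it suffices to show that $N^\Psi$ induces an isomorphism
\[
N^\Psi\colon\on{End}_{\pi\op\pG_p}(\h{\pab})\xrightarrow{\ \sim\ }\on{End}_{\on{Ho}\cat{WOp}\pG_p}(N^\Psi\h{\pab}).
\]
The first step is to verify that $N^\Psi\h{\pab}$ is fibrant in $\cat{WOp}\pG_p$: its values are fibrant profinite groupoids, and since $\h{\pab}$ is a strict operad the Segal comparison maps $N^\Psi\h{\pab}(\psi\times\tau)\to N^\Psi\h{\pab}(\psi)\times N^\Psi\h{\pab}(\tau)$ are isomorphisms.

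For well-definedness and injectivity I would use a compatible path object on both sides. The endofunctor $C\mapsto C^{I[1]}$ on $\pG_p$ is right adjoint to $-\times I[1]$ and hence preserves products; it also preserves fibrations and weak equivalences between fibrant objects. Applied levelwise it yields a functorial path object for $\cat{WOp}\pG_p$ whose restriction along $N^\Psi$ coincides with the path object $\oper{P}\mapsto\oper{P}^{I[1]}$ used to define $\pi\op\pG_p$. Consequently two $I[1]$-homotopic strict operad endomorphisms of $\h{\pab}$ become right-homotopic in $\cat{WOp}\pG_p$; conversely any right-homotopy in $\cat{WOp}\pG_p$ between two strict operad maps factors through $(N^\Psi\h{\pab})^{I[1]}=N^\Psi(\h{\pab}^{I[1]})$ and therefore already comes from an $I[1]$-homotopy. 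This gives both well-definedness and injectivity.

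For surjectivity I would pass to the underlying $\infty$-categories via the preceding proposition identifying $\icat{N}\cat{WOp}\pG_p$ with the $\infty$-category of product-preserving functors $\Psi\opp\to\icat{N}\pG_p$. Since $\pG_p$ is a $1$-category whose underlying $\infty$-category is $(2,1)$-truncated, and since $N^\Psi\h{\pab}$ is strictly product-preserving with fibrant values, an $\infty$-natural endotransformation of $N^\Psi\h{\pab}$ is specified by strict functor data together with $2$-cell coherences, which can be rectified to a strict natural transformation of diagrams $\Psi\opp\to\pG_p$, i.e.\ to a strict operad endomorphism of $\h{\pab}$. The main technical obstacle is exactly this rectification. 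I expect it to be tractable because the mapping spaces in $\icat{N}\pG_p$ are nerves of $1$-groupoids, and the strict path object $(-)^{I[1]}$ already available on $\op\pG_p$ can be used to absorb the $2$-cell coherences into genuine $I[1]$-homotopies between strict operad maps; combined with the injectivity argument this identifies both endomorphism monoids.
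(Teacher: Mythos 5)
Your overall architecture is the right one, and it is the one the paper (implicitly) uses: reduce to showing that $N^\Psi$ induces an isomorphism from $\on{End}_{\pi\op\pG_p}(\h{\pab})$ to $\on{End}_{\on{Ho}\cat{WOp}\pG_p}(N^\Psi\h{\pab})$, with the previous proposition supplying the identification of the source with $\pugt_p$. The paper itself gives no details and simply defers to \cite[Proposition 8.1]{horelprofinite}, so the real question is whether your execution of the two directions closes the argument. It does not quite, and the gaps sit exactly where the cited proof does its work.

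For injectivity, you pass from ``equal in $\on{Ho}\cat{WOp}\pG_p$'' to ``right homotopic via the levelwise path object $(-)^{I[1]}$.'' That implication requires the source to be cofibrant (and the target fibrant) in a model structure presenting the localization at objectwise weak equivalences. You verify fibrancy of $N^\Psi\h{\pab}$, but you never address its cofibrancy, and as a $\Psi\opp$-diagram it is not projectively cofibrant: a priori two strict maps could be identified in the homotopy category by a zig-zag passing through a cofibrant replacement without being $I[1]$-homotopic as strict maps. The same missing ingredient undermines surjectivity, which you correctly identify as the crux but then only assert: rectifying a homotopy-coherent (pseudo-natural) endotransformation of $N^\Psi\h{\pab}$ to a strict natural transformation of $\Psi\opp$-diagrams is not possible in general without replacing the source, after which one still has to descend to a strict operad endomorphism of $\h{\pab}$ itself (and one inducing the identity on objects, so as to land in $\pugt_p$). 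What makes both steps go through in \cite{horelprofinite} is the special structure of $\pab$ --- its operad of objects is the free operad on a single binary operation --- which provides the cofibrancy/freeness needed to compare strict maps out of $N^\Psi\h{\pab}$ with derived maps. Without some substitute for that input (or an explicit cofibrant replacement together with a comparison of its strict endomorphisms with those of $\h{\pab}$), both your injectivity and surjectivity arguments remain incomplete. A further, smaller point: the model structure on $\cat{WOp}\pG_p$ in which you speak of fibrant objects and right homotopies is not constructed in this paper and would also need to be imported from \cite{horelprofinite}.
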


\begin{proof}
This can be proved as \cite[Proposition 8.1.]{horelprofinite}.
\end{proof}

Now, we are ready to prove Theorem \ref{theo : main p}.

\begin{proof}
By Proposition \ref{prop: weak operads infinity categories}, there is an isomorphism
\[\on{End}_{\on{Ho}\cat{WOp}\pS_p}(\h{N^{\Psi}B\pab})\cong\on{End}_{\on{Ho}\icat{WOp}\ips_p}(\h{N^{\Psi}B\pab}).\]
Thus, using the previous proposition, it is enough to prove that there is an isomorphism of monoids
\[\on{End}_{\on{Ho}\cat{WOp}\pG_p}(N^{\Psi}\h{\pab})\cong\on{End}_{\on{Ho}\cat{WOp}\pS_p}(\h{N^{\Psi}B\pab}).\]
This is done in the profinite case in \cite[Proposition 8.3.]{horelprofinite} using the goodness of the pure braid groups (\emph{cf.} \cite[Corollary 5.11.]{horelprofinite}). The fact that the pure braid groups are good as profinite groups implies immediately that they are $p$-good. Hence the proof of \cite[Proposition 8.3.]{horelprofinite} applies \emph{mutatis mutandi}. 
\end{proof}

Using Theorem \ref{theo : main p}, we can prove Theorem \ref{theo: main theorem hopf} and Theorem \ref{theo: main theorem BK}. For $\kk$ a commutative ring, we denote by $\icat{HCOp}_{\kk}$ the $\infty$-category $(\icat{WOp}((\icalg{\kk})\opp))\opp$ and call it the $\infty$-category of Hopf cooperads. It is very likely that if $\kk$ is a field of characteristic $0$, then the $\infty$-category of Hopf cooperads can be presented as the $\infty$-category underlying the model category constructed by Fresse in \cite{fressehomotopy}. The latter is a model structure on the category of strict cooperads in CDGAs over $\kk$ where the weak equivalences are the quasi-isomorphisms. However such a result should not be expected in characteristic $p$ which justifies the use of $\infty$-categories.

\begin{theo}
Let $\kk$ be an algebraically closed field of characteristic $p$. The monoid of endomorphisms of $\CC(N^\Psi\od,\kk)$ in the category $\on{Ho}\icat{HCOp}_{\kk}$ is $\pugt_p$.
\end{theo}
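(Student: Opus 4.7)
My plan is to deduce the theorem from Theorem \ref{theo : main p} via the $E_\infty$-cochains functor. The first step is to observe that levelwise application of $\CC(-,\kk)$ induces a functor
\[\icat{WOp}\ips_p \longrightarrow \icat{HCOp}_\kk\opp,\]
sending a weak operad $X$ to $\CC(X, \kk)$. This is well-defined because $\CC(-,\kk)$ sends products of pro-$p$-spaces of finite $p$-type to coproducts in $\icalg{\kk}$ (i.e.\ tensor products of $E_\infty$-algebras), by the K\"unneth theorem. Moreover, this functor sends the pro-$p$ completion $\h{N^\Psi\od}$ to the Hopf cooperad $\CC(N^\Psi\od, \kk)$, since $\CC(X, \kk) \simeq \CC(\h{X}_p, \kk)$ for any simplicial set $X$ of finite $p$-type when $\kk$ has characteristic $p$.

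Combining with Theorem \ref{theo : main p} yields a homomorphism of monoids
\[\pugt_p \cong \on{End}_{\on{Ho}\icat{WOp}\ips_p}(\h{N^\Psi\od}) \longrightarrow \on{End}_{\on{Ho}\icat{HCOp}_\kk}(\CC(N^\Psi\od, \kk)),\]
and the task is to show this map is an isomorphism. The crucial ingredient is Mandell's Theorem \ref{prop: cochains is fully faithful}, which states that $\CC(-, \kk)$ is fully faithful on $\Pro(\is_{p-fin}^0)$, the subcategory of pro-objects in connected $p$-finite spaces.

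Each connected component of the pro-$p$-space $\h{\od(n)}_p$ is equivalent to $B\h{P_n}_p$, which lies in $\Pro(\is_{p-fin}^0)$ since the pure braid group $P_n$ is $p$-good. Thus Mandell's theorem applies on each component. The main obstacle I expect is that $\h{\od(n)}_p$ itself is disconnected, with $n!$ components indexed by $S_n$ that are freely permuted by the symmetric group action. To handle this, I would exploit the rigidifying effect of the weak operad structure: morphisms in both $\icat{WOp}\ips_p$ and $\icat{HCOp}_\kk\opp$ correspond to natural transformations of product-preserving functors out of $\Psi\opp$, and this structure encodes $S_n$-equivariance at each arity. An operadic endomorphism is thus determined by its restriction to one chosen component at each arity, where Mandell's theorem does apply. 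Combining these component-wise matches of mapping spaces with the compatibility constraints imposed by the operadic structure (which $\CC(-,\kk)$ preserves in both directions) should yield that the displayed homomorphism is a bijection.
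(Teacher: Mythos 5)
Your overall strategy is the paper's: apply the product-preserving functor $\CC(-,\kk)$ levelwise to obtain a functor $\icat{WOp}\ips_p\to\icat{HCOp}_{\kk}$, identify $\CC(\h{N^\Psi\od},\kk)$ with $\CC(N^\Psi\od,\kk)$, and use Mandell's theorem to transport the endomorphism computation of Theorem \ref{theo : main p}. However, the step you single out as the main obstacle rests on a false premise: $\od(n)$ is \emph{connected}. It is homotopy equivalent to the ordered configuration space $\on{Conf}_n(\R^2)$, which is a connected $K(P_n,1)$; the $S_n$-action permutes the marked points, not path components (you may be thinking of the little $1$-disks operad, whose arity-$n$ space is homotopy discrete with $n!$ components). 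Consequently every value of $N^\Psi\h{\od}$ is a finite product of spaces of the form $B\h{P_n}_p$ and hence already lies in $\Pro(\is_{p-fin}^0)$, so Theorem \ref{prop: cochains is fully faithful} applies directly: a fully faithful product-preserving functor induces a fully faithful functor on $\infty$-categories of product-preserving functors out of $\Psi\opp$, since mapping spaces in functor categories are limits of mapping spaces in the target. No component-by-component bookkeeping or appeal to $S_n$-equivariance is needed, and the paper's proof consists of exactly the three ingredients you list, minus the detour. Had the spaces genuinely been disconnected your sketch would require real work, since Mandell's theorem as stated only covers connected spaces and ``restricting to one chosen component'' is not obviously compatible with full faithfulness of $\CC(-,\kk)$; as it stands, the detour is harmless but should simply be deleted.
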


\begin{proof}
The functor $\CC(-,\kk):\ips_p\to \icalg{\kk}\opp$ preserves products and thus induces a functor
\[\CC(-,\kk):\icat{WOp}\ips_p\to \icat{HCOp}_{\kk}\opp.\]
From Proposition \ref{prop: cochains is fully faithful}, we deduce that there is a fully faithful embedding of $\infty$-categories
\[\CC(-,\kk):\icat{WOp}(\Pro(\is^0_{p-fin}))\to \icat{HCOp}_{\kk}\opp.\]
Moreover, the obvious map
\[\CC(\h{N^\Psi\od},\kk)\to\CC(N^\Psi\od,\kk)\]
is an equivalence of Hopf cooperads.
\end{proof}

Now, we come to the proof of Theorem \ref{theo: main theorem BK}. We start with the following theorem.

\begin{theo}\label{theo: partial result}
The materialization functor $\Mat:\ips_p\to \is$ induces an isomorphism
\[\End_{\on{Ho}\icat{WOp}\ips_p}(\h{N^\Psi\od})\cong \End_{\on{Ho}\icat{WOp}\is}(\Mat(\h{N^\Psi\od})).\]
\end{theo}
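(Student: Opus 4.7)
The approach is to exploit the $\infty$-adjunction $\h{(-)}\dashv\Mat$ at the level of weak operads. Since pro-$p$ completion commutes with finite products, both functors extend levelwise to an adjunction
\[
\h{(-)}:\icat{WOp}\is\leftrightarrows\icat{WOp}\ips_p:\Mat.
\]
Setting $X=\h{N^\Psi\od}$, I will first use this adjunction to rewrite
\[
\End_{\on{Ho}\icat{WOp}\is}(\Mat X)\;\cong\;\Hom_{\on{Ho}\icat{WOp}\ips_p}(\h{\Mat X},X),
\]
and identify the map $\Mat_{*}$ induced by materialization with precomposition by the counit $\epsilon:\h{\Mat X}\to X$; this is a quick check from the triangle identity for the adjunction. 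With this identification in hand, proving that $\Mat_*$ is bijective reduces to proving that $\epsilon$ is an equivalence in $\icat{WOp}\ips_p$.

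Next, I will detect the equivalence objectwise. Equivalences of weak operads are pointwise, and equivalences in $\ips_p$ are the mod-$p$ cohomology isomorphisms. Since pro-$p$ completion preserves mod-$p$ cohomology (by the adjunction applied to the test objects $K(\ZZ/p,m)$, which live in both $\is$ and $\ips_p$), the pointwise statement becomes the requirement that, for every arity $n$, the unit
\[
\od(n)\to\Mat\h{\od(n)}
\]
induces an isomorphism on cohomology with $\ZZ/p$ coefficients.

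Finally, I will verify this cohomological criterion using the geometric structure of the little $2$-disks spaces. Since $\od(n)$ is an Eilenberg-MacLane space $K(P_n,1)$ for the pure braid group $P_n$, the unit above models the natural comparison $BP_n\to B\h{P_n}_p$ with the classifying space of the pro-$p$ completion of $P_n$, and the condition is exactly the $p$-goodness of $P_n$. I expect this last cohomological step to be the main obstacle, but it is already available: \cite[Corollary 5.11.]{horelprofinite} proves the pure braid groups to be good as discrete groups, and for finitely generated groups profinite goodness implies $p$-goodness. Putting these three steps together will yield the theorem.
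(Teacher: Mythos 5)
Your first two reductions are sound: under the adjunction $\h{(-)}\dashv\Mat$ for weak operads, the map $\Mat_*$ is identified with $f\mapsto f\circ\epsilon$ via $\End_{\on{Ho}\icat{WOp}\is}(\Mat X)\cong\Hom_{\on{Ho}\icat{WOp}\ips_p}(\h{\Mat X},X)$, so it would suffice for the counit $\epsilon$ to be an equivalence, and equivalences are detected arity-wise on mod-$p$ cohomology. The gap is in your final step, where you identify the resulting condition with $p$-goodness of $P_n$. Goodness controls the cohomology of the pro-object $\h{\od(n)}$, i.e.\ the \emph{continuous} cohomology $\on{colim}_N H^*(K(P_n/N,1),\ZZ/p)$ of $G=\h{P_n}_p$. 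But the target of your unit is $\Mat\h{\od(n)}$, which (granting, via goodness, the identification of $\h{\od(n)}$ with the pro-space $\{K(P_n/N,1)\}_N$) is the \emph{limit} $\lim_N K(P_n/N,1)\simeq K(G^{\delta},1)$, where $G^{\delta}$ is the underlying discrete group of $G$; its mod-$p$ cohomology is the discrete group cohomology $H^*(G^{\delta},\ZZ/p)$. So what you actually need is that the comparison $H^*_{cont}(G,\ZZ/p)\to H^*(G^{\delta},\ZZ/p)$ is an isomorphism. That is a statement about the profinite group $G$ itself, not about $P_n$, and it is not implied by goodness: already in degree $1$ it says that every index-$p$ subgroup of $G$ is open, and in higher degrees the continuous-versus-discrete comparison is a genuinely hard problem that your argument never addresses. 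In particular it is not established (here or in the cited references) that the counit is an equivalence, so the reduction you chose may be aiming at a statement that is stronger than the theorem and possibly false.

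The paper's proof, following \cite[Corollary 8.12]{horelprofinite}, sidesteps this entirely by staying at the level of $\pi_1$. All the spaces involved are aspherical, so $\Mat(\h{N^{\Psi}B\pab})$ is the classifying space of the operad in \emph{discrete} groupoids underlying $\h{\pab}_p$, and its endomorphisms in $\on{Ho}\icat{WOp}\is$ are computed by abstract endomorphisms of that groupoid operad; no higher cohomology of $K(G^{\delta},1)$ ever enters. The decisive input is then the Nikolov--Segal theorem: $\h{P_n}_p$ is topologically finitely generated, hence strongly complete, so every abstract homomorphism between these groups is automatically continuous and the discrete endomorphism monoid agrees with the continuous one, i.e.\ with $\End_{\on{Ho}\icat{WOp}\ips_p}(\h{N^{\Psi}\od})$. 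Goodness is used elsewhere (to identify $\h{B\pab}$ with $B\h{\pab}$); the ingredient your proposal is missing, and which cannot be replaced by goodness, is strong completeness.
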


\begin{proof}
This can be proved exactly as \cite[Corollary 8.12]{horelprofinite} using the fact that the $p$-completion of the pure braid groups are strongly complete. This last fact follows from the main theorem of \cite{nikolovfinitely} together with the fact that those profinite groups are topologically finitely generated.
\end{proof}

Now we wish to compare the Bousfield-Kan construction and the functor $X\to \Mat(\h{X})$. We first construct a more explicit model of $X\mapsto \Mat(\h{X})$. Using the Quillen adjunction \ref{eq: quillen pair} and the sentence following it, we see that $X\mapsto \Mat(\h{X})$ can be modeled by $X\mapsto |(\h{X})^f|$ where the $f$-superscript denotes a functorial fibrant replacement. As explained in \cite{morelensembles}, any object $P$ in $\pS$ can be functorially expressed as $P=\on{lim}_{R\in \mathcal{R}(P)} P/R$ where $\mathcal{R}(P)$ is the filtered poset of continuous equivalence relations on $P$ that are such that $P/R$ is levelwise finite. 

For $X$ a simplicial set, let us denote by $\on{Res}^\bullet(X)$ the cosimplicial simplicial set whose totalization is $(\ZZ/p)_\infty(X)$. Morel shows in \cite[Proposition 2]{morelensembles}, that  for $P$ an object of $\pS$, the pro-object \[\{\on{Tot}^s(\on{Res}^\bullet( P/R))\}_{s\in\mathbb{N},R\in\mathcal{R}(P)}\] 
is a fibrant replacement of $P$ in $\pS_p$. It follows that the functor $X\mapsto \Mat(\h{X})$ is equivalent to the functor
\[M(X):= \on{lim}_{s\in\mathbb{N},R\in\mathcal{R}(\h{X})}\on{Tot}^s(\on{Res}^\bullet( \h{X}/R)).\]
There is also an obvious natural transformation
\[(\ZZ/p)_{\infty}(X):= \on{lim}_{s\in\mathbb{N}}\on{Tot}^s(\on{Res}^\bullet( X))\to M(X).\]

\begin{lemm}\label{lemm: BK vs pro} The above natural transformation is an equivalence on each simplicial set that are weakly equivalent to a levelwise finite simplicial set.
\end{lemm}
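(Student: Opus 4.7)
The strategy is to reduce the statement to the special case where $X$ is itself levelwise finite, in which case the natural transformation will turn out to be the identity. The reduction requires that both functors involved are homotopy-invariant. For the target, $X\mapsto M(X)$ is, by the preceding discussion, equivalent to $X\mapsto \Mat(\h{X})$, which is obtained as the composite of the derived functors of the Quillen adjunction \eqref{eq: quillen pair}, and as such it descends to the underlying $\infty$-categories and preserves weak equivalences. For the source, the Bousfield-Kan completion $(\ZZ/p)_\infty$ is a homotopy-invariant functor on simplicial sets by the results of \cite[Chapter I]{bousfieldhomotopy}. Given this, the naturality of the transformation together with the two-out-of-three property reduces the problem to the case where $X$ itself is levelwise finite.

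In that case I would argue as follows. Since profinite completion is the identity on finite sets, $\h{X}$ coincides with $X$ viewed as an object of $\pS$, and in particular is itself levelwise finite. The identity equivalence relation then lies in $\mathcal{R}(\h{X})$ with quotient $X$, and it is initial in the cofiltered indexing category of the diagram $R\mapsto \h{X}/R$ (since it is the finest relation on $\h{X}$ and so admits a morphism to every other $R\in\mathcal{R}(\h{X})$). Consequently, the inner limit over $R$ in the definition of $M(X)$ collapses, and one obtains
\[
M(X) \;=\; \lim_{s\in\mathbb{N}}\on{Tot}^s(\on{Res}^\bullet(X)) \;=\; (\ZZ/p)_\infty(X),
\]
with the natural transformation becoming the identity under this identification.

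The only mildly delicate ingredient in this outline is the homotopy-invariance of the Bousfield-Kan completion on arbitrary simplicial sets (when formulated naively via the totalization of the cosimplicial resolution it is sensitive to fibrant replacement), which I would import directly from \cite[Chapter I]{bousfieldhomotopy}. The remaining steps are straightforward manipulations of cofiltered limits and of the definitions involved.
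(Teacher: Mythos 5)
Your proof is correct and follows essentially the same route as the paper: reduce to the levelwise finite case using that both functors preserve weak equivalences, then observe that $\h{X}=X$ and that the identity relation is an initial object of $\mathcal{R}(\h{X})$, so the inner limit collapses and the transformation is an isomorphism. The extra detail you supply on homotopy-invariance is a reasonable elaboration of what the paper states in one sentence.
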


\begin{proof}
Since both functor are weak equivalence preserving, it suffices to prove this claim for $X$ a levelwise finite simplicial set. In that case $\h{X}=X$ and the poset of continuous equivalence relations with a levelwise finite quotient has an initial object. It follows that for such an $X$, the natural transformation $(\ZZ/p)_\infty(X)\to M(X)$ is actually an isomorphism.
\end{proof}

We can now prove Theorem \ref{theo: main theorem BK}.

\begin{theo}
The $\Psi$-diagram $(\ZZ/p)_\infty (N^\Psi\od)$ is an object of $\icat{WOp}\is$ and there is an isomorphism 
\[\pugt_p\cong\End_{\on{Ho}\icat{WOp}\is}((\ZZ/p)_\infty (N^\Psi\od)).\]
\end{theo}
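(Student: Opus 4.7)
The plan is to identify the $\Psi\opp$-diagram $(\ZZ/p)_\infty(N^\Psi\od)$ with the diagram $\Mat(\h{N^\Psi\od})$ in $\Fun(\Psi\opp, \is)$, and then to combine Theorem \ref{theo: partial result} with Theorem \ref{theo : main p}. First I would check that Lemma \ref{lemm: BK vs pro} applies at each level. For each object $\psi=(n_1,\ldots,n_k)$ of $\Psi$, the value $(N^\Psi\od)(\psi)\simeq \prod_i \od(n_i)$ is a finite product of the spaces $\od(n)$, each of which is weakly equivalent to the configuration space of $n$ points in $\R^2$. This configuration space admits a finite CW structure (for instance via the Fadell--Neuwirth tower or the Salvetti complex), so $(N^\Psi\od)(\psi)$ is weakly equivalent to a levelwise finite simplicial set for every $\psi$. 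Since both $(\ZZ/p)_\infty$ and $M$ are homotopy-invariant endofunctors of $\S$ (one as the totalization of a cosimplicial resolution, the other as the derived composite of two right adjoints), a two-out-of-three argument extends the pointwise equivalence of Lemma \ref{lemm: BK vs pro} from levelwise finite simplicial sets to any simplicial set weakly equivalent to one. Naturality then yields a levelwise equivalence of $\Psi\opp$-diagrams in $\is$:
\[
(\ZZ/p)_\infty(N^\Psi\od) \;\simeq\; M(N^\Psi\od) \;\simeq\; \Mat(\h{N^\Psi\od}).
\]

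Next I would observe that the right-hand side is automatically a weak operad in $\is$: the $p$-completion $\h{(-)}$ preserves finite products up to weak equivalence (as recalled at the start of this section), and $\Mat$ preserves all limits since it is a right adjoint, so the composite sends the genuine operad $N^\Psi\od$ to a weak operad. Together with the display above this establishes the first assertion of the theorem, namely that $(\ZZ/p)_\infty(N^\Psi\od)$ lies in $\icat{WOp}\is$. For the claim on endomorphism monoids I would then concatenate
\[
\End_{\on{Ho}\icat{WOp}\is}\bigl((\ZZ/p)_\infty N^\Psi\od\bigr) \cong \End_{\on{Ho}\icat{WOp}\is}\bigl(\Mat(\h{N^\Psi\od})\bigr) \cong \End_{\on{Ho}\icat{WOp}\ips_p}\bigl(\h{N^\Psi\od}\bigr) \cong \pugt_p,
\]
invoking Theorem \ref{theo: partial result} for the middle isomorphism and Theorem \ref{theo : main p} for the last.

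The main obstacle is the first step, namely the promotion of Lemma \ref{lemm: BK vs pro} from a pointwise comparison on genuinely levelwise finite simplicial sets to an honest equivalence of $\Psi\opp$-diagrams in $\is$. This requires the model of $(\ZZ/p)_\infty$ as a $\on{Tot}$ of a cosimplicial resolution to be reconciled with its homotopy-invariant $\infty$-categorical incarnation, so that one may freely replace $\od(n)$ by a finite CW model without affecting the construction. Once this reduction to $\Mat(\h{(-)})$ is in place, every remaining step is either a direct citation of an earlier result in this section or a formal consequence of the $(\h{(-)},\Mat)$ adjunction.
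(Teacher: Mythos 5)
Your proposal is correct and follows essentially the same route as the paper: apply Lemma \ref{lemm: BK vs pro} levelwise (using that the $\od(n)$ have the homotopy type of finite CW-complexes) to identify $(\ZZ/p)_\infty(N^\Psi\od)$ with $M(N^\Psi\od)\simeq\Mat(\h{N^\Psi\od})$, then conclude via Theorem \ref{theo: partial result} and Theorem \ref{theo : main p}. The only cosmetic difference is that the paper establishes the weak-operad property directly from the fact that $(\ZZ/p)_\infty$ preserves products up to homotopy (citing Bousfield--Kan I.7.2), whereas you transfer it across the equivalence with $\Mat(\h{N^\Psi\od})$; both are valid.
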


\begin{proof}
The functor $(\ZZ/p)_\infty$ preserves products up to homotopy by \cite[I.7.2]{bousfieldhomotopy}, therefore $(\ZZ/p)_\infty (N^\Psi\od)$ is in $\icat{WOp}\is$. The spaces $\od(n)$ have the homotopy type of finite CW-complexes. Thus the previous lemma implies that $(\ZZ/p)_\infty (N^\Psi\od)\to  M(N^\Psi\od)$ is an equivalence in $\icat{WOp}\is$. But now, using Theorem \ref{theo: partial result}, we have an isomorphism
\[\pugt_p\cong\End_{\on{Ho}\icat{WOp}\is}(M(N^\Psi\od)).\]
\end{proof}

\begin{rem}
In \cite{boavidaoperads}, we proved that the group $\pgt$ is also the group of automorphisms of the profinite completion of the framed little disks operad. The argument above can be applied in order to prove that the group $\pgt_p$ is the group of automorphisms of the $p$-completed framed little disks operad. The framed little disks version of Theorem \ref{theo: main theorem hopf} and Theorem \ref{theo: main theorem BK} also hold.
\end{rem}

\section{Review of $p$-complete stable homotopy theory}

Given a stable $\infty$-category $\icat{C}$, we denote by $\map_{\icat{C}}(-,-)$ the mapping spectrum and by $\Map_{\icat{C}}(-,-)$ the mapping space (which is the infinite-loop space of the mapping spectrum).

For $E$ a spectrum, we denote by $L_pE$ the localization of $E$ with respect to the homology theory represented by $S\ZZ/p$ (the Moore spectrum of the group $\ZZ/p$). Note that by \cite[Theorem 3.1.]{bousfieldlocalization}, this coincides with the $H\ZZ/p$-localization if $E$ is bounded below. For $E$ a bounded below spectrum with finitely generated homotopy groups, the map $E\to L_pE$ induces the map $E_*\to E_*\otimes\ZZ_p$ on homotopy groups.

We denote by $\sp_{p-fin}$ the smallest full stable subcategory of $\sp$ containing the spectrum $H\ZZ/p$. This can also be described as the full subcategory of $\sp$ spanned by the spectra whose homotopy groups are finite $p$-groups and are almost all $0$.

We denote by $\psp_p$ the $\infty$-category $\Pro(\sp_{p-fin})$. The inclusion $\sp_{p-fin}\to \sp$ induces a limit preserving functor $\Mat:\psp_p\to\sp$. This has a left adjoint denoted $X\mapsto \h{X}$. 

We denote by $\tau_n$ the $n$-th Postnikov section endofunctor on $\sp$. By the universal property of the pro-category, there is a unique endofunctor of $\psp_p$ that coincides with $\tau_n$ on $\sp_{p-fin}$ and commutes with cofiltered limits. We still denote this functor $\tau_n$. For $A$ a pro-$p$ abelian group, we denote by $\h{H}A$ the object of $\psp_p$ given by applying the Eilenberg-MacLane functor to an inverse system of finite abelian group whose limit is $A$. Note for instance that $\h{H}\ZZ_p$ lives in $\psp_p$ while $H\ZZ_p$ lives in $\sp$. There is an obvious weak equivalence $H\ZZ_p\to \Mat(\h{H}\ZZ_p)$.

We denote by $\sp_p$ the full $\infty$-subcategory of $\sp$ spanned by spectra that are local with respect to $S\ZZ/p$. We denote by $\sp_p^{ft}$ the full $\infty$-subcategory of $\sp$ spanned by bounded below spectra whose homotopy groups are finitely generated $\ZZ_p$-modules. Note that $\sp_p^{ft}$ is a full $\infty$-subcategory of $\sp_p$. Note also that if $X$ is a bounded below spectrum that has finitely generated homotopy groups, then $L_pX$ is in $\sp_p^{ft}$. Similarly, we denote by $\psp_p^{ft}$ the full $\infty$-subcategory of $\psp_p$ spanned by pro-spectra that are bounded below and whose homotopy groups are finitely generated $\ZZ_p$-modules (given a pro-spectrum $X=\lim_{i\in I}X_i$ with $I$ a cofiltered category, its $n$-th homotopy group is, by definition, the pro-abelian group $\{\pi_n(X_i)\}_{i\in I}$).

\begin{lemm}\label{lemm: HZZ_p is good}
The map $H\ZZ_p\to\Mat(\h{H}\ZZ_p)$ is adjoint to a weak equivalence $\h{H\ZZ_p}\to \h{H}\ZZ_p$.
\end{lemm}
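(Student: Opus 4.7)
The plan is to show the map $\h{H\ZZ_p}\to\h{H}\ZZ_p$ is a weak equivalence in $\psp_p=\Pro(\sp_{p-fin})$ by detecting equivalences via mapping spectra into $\sp_{p-fin}$. Since $\sp_{p-fin}$ is the smallest stable $\infty$-subcategory of $\sp$ containing $H\ZZ/p$, weak equivalences in $\psp_p$ are detected by the functors $\Map_{\psp_p}(-,H\ZZ/p[k])$ for $k\in\ZZ$, so it suffices to check that
\[
\Map_{\psp_p}(\h{H}\ZZ_p,H\ZZ/p[k])\longrightarrow\Map_{\psp_p}(\h{H\ZZ_p},H\ZZ/p[k])
\]
is an equivalence for every $k$. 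By the $\h{(-)}\dashv\Mat$ adjunction together with the fact that $\Mat$ is the identity on $\sp_{p-fin}$, the right-hand side reads $\Map_{\sp}(H\ZZ_p,H\ZZ/p[k])$; by the pro-spectrum description $\h{H}\ZZ_p=\{H\ZZ/p^n\}_{n\geq 1}$, the left-hand side reads $\colim_n\Map_{\sp}(H\ZZ/p^n,H\ZZ/p[k])$.

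To compare the two, I would use the cofiber sequence $H\ZZ_p\xrightarrow{p^n}H\ZZ_p\to H\ZZ/p^n$ in $\sp$. Applying $\Map(-,H\ZZ/p[k])$ yields a fiber sequence presenting $\Map(H\ZZ/p^n,H\ZZ/p[k])$ as the fiber of multiplication by $p^n$ on $W:=\Map(H\ZZ_p,H\ZZ/p[k])$. Because $p$ annihilates $H\ZZ/p$, it acts as zero on $W$, every $p^n$ is null on $W$, and $\pi_\ast W$ is $p$-torsion; in particular each fiber splits as $W\oplus\Omega W$. I would then identify the transition map $\on{fib}(p^n)\to\on{fib}(p^{n+1})$ by lifting the reduction $\ZZ/p^{n+1}\to\ZZ/p^n$ to a chain map between their standard resolutions $\ZZ\xrightarrow{p^n}\ZZ$; the unique such lift is the identity in degree $0$ and multiplication by $p$ in degree $1$. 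Passing to the colimit, the identity on the $W$ summand survives, while the $\Omega W$ summand is annihilated, the transition acting there by $p$ on a $p$-torsion group. This yields $\colim_n\Map(H\ZZ/p^n,H\ZZ/p[k])\simeq W$, which is the required equivalence.

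The hard part will be the bookkeeping of the transition maps in the tower of fibers and verifying that the surplus $\Omega W$ contributions vanish in the colimit. Equivalently, the content of the computation is the arithmetic identity $\on{Ext}^1_\ZZ(\ZZ_p,\ZZ/p)=0$, which is the non-formal input emerging from the comparison of $W$ with the elementary Ext groups of the $\ZZ/p^n$. Once this is confirmed, the argument immediately produces the weak equivalence on all homotopy groups, and hence in $\psp_p$.
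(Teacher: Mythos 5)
Your argument is correct, and it reaches the paper's own first reduction by the same route: both proofs observe that equivalences in $\psp_p$ are detected by mapping into $H\ZZ/p$ and its shifts, so that everything comes down to showing $\on{colim}_n H^k(H\ZZ/p^n,\ZZ/p)\to H^k(H\ZZ_p,\ZZ/p)$ is an isomorphism. The two proofs then diverge at the key computation. The paper dualizes to mod-$p$ homology and quotes Lurie's continuity result (\cite[Proposition 3.3.10]{luriedag13}), which identifies $H_k(\Sigma^{-m}\Sigma^\infty K(\ZZ_p,m),\ZZ/p)$ with the pro-system $\{H_k(\Sigma^{-m}\Sigma^\infty K(\ZZ/p^n,m),\ZZ/p)\}_n$, and then uses Freudenthal to pass to Eilenberg--MacLane spectra; this is short but outsources the real content. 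You instead compute directly from the Bockstein cofiber sequences $H\ZZ_p\xrightarrow{p^n}H\ZZ_p\to H\ZZ/p^n$, using that $p$ acts by zero on the $H\ZZ/p$-module $W=\map(H\ZZ_p,H\ZZ/p)$ and that the transition maps act by $(\id,\cdot p)$ on the two copies of $W$; this is self-contained and elementary, which is a genuine gain. Two small points of care. First, the spectrum-level splitting $\on{fib}(0)\simeq W\oplus\Omega W$ and its naturality in the transition maps depends on a coherent choice of nullhomotopies; it is cleaner, and sufficient since filtered colimits commute with $\pi_*$, to run the argument on the long exact sequences $0\to H^{m-1}(H\ZZ_p,\ZZ/p)\to H^m(H\ZZ/p^n,\ZZ/p)\to H^m(H\ZZ_p,\ZZ/p)\to 0$, where the transition maps vanish on the sub and are compatible with the quotient, so the colimit is $H^m(H\ZZ_p,\ZZ/p)$ as needed. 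Second, your closing slogan that the content is $\on{Ext}^1_\ZZ(\ZZ_p,\ZZ/p)=0$ is a reasonable heuristic for the degree-one part, but the statement you actually prove and use is that the connecting-map summands die in the colimit because the transition maps multiply them by $p$ and they are $p$-torsion; phrased that way, no separate arithmetic input is required.
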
 

\begin{proof}
It suffices to show that for any spectrum $F$ in $\sp_{p-fin}$, the map
\[\map_{\psp_p}(\h{H}\ZZ_p,F)\to\map(H\ZZ_p,F)\]
is a weak equivalence. Since both sides of the equation are exact in $F$, it suffices to do it for $F=H\ZZ/p$. Hence we are reduced to proving that the map
\[\on{colim}_n H^k(H\ZZ/p^n,\ZZ/p)\to H^k(H\ZZ_p,\ZZ/p)\]
is an isomorphism for each $k$. Since $\ZZ/p$ is a field, cohomology is dual to homology and it suffices to prove that $H_k(H\ZZ_p,\ZZ_p)$ is isomorphic to $\{H_k(H\ZZ/p^n,\ZZ/p)\}_n$ in the category of pro-abelian groups. In \cite[Proposition 3.3.10.]{luriedag13}, Lurie shows that there is an isomorphism of pro-abelian groups:
\[H_k(\Sigma^{-m}\Sigma^{\infty}K(\ZZ_p,m),\ZZ/p)\cong \{H_k(\Sigma^{-m}\Sigma^{\infty}K(\ZZ/p^n,m),\ZZ/p)\}_n.\]
By Freudenthal suspension theorem, for any abelian group $A$, the map 
\[\Sigma^{-m}\Sigma^{\infty}K(A,m)\to HA\]
is roughly $m$-connected. Thus, taking $m$ large enough, Lurie's result gives what we need.
\end{proof}

We denote by $\tau^n$ the fiber of the map $\id\to\tau_n$, for $\tau_n:\psp_p\to\psp_p$ and for $\tau_n:\sp\to\sp$. That is, there are cofiber sequences
\[\tau^nX\to X\to \tau_nX.\]
for all $X$ that are moreover functorial in $X$ (where $X$ lives either in $\sp$ or in $\psp_p$).

\begin{lemm}
Let $X$ be an object of $\psp_p$. The following conditions on $X$ are equivalent.
\begin{enumerate}
\item The map $\tau^nX\to X$ is an equivalence.
\item The map $\tau_nX\to \ast$ is an equivalence.
\item For all $k\leq n$, the group $H^k(X,\ZZ/p)$ is zero.
\item If $Y$ is such that $Y\to \tau_nY$ is an equivalence, then the space $\Map(X,Y)$ is trivial.
\end{enumerate}
If $X$ satisfies these equivalent conditions, we say that $X$ is $n$-connected.
\end{lemm}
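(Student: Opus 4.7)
The plan is to prove the equivalences in the order $(1) \Leftrightarrow (2) \Leftrightarrow (4)$ formally, and then $(2) \Leftrightarrow (3)$ via the $t$-structure. The cofiber sequence $\tau^n X \to X \to \tau_n X$ makes $(1) \Leftrightarrow (2)$ immediate: the first map is an equivalence exactly when the cofiber vanishes. For $(2) \Rightarrow (4)$, the universal property of truncation yields $\Map(X, Y) \simeq \Map(\tau_n X, Y)$ whenever $Y \simeq \tau_n Y$, and the right-hand side is trivial when $\tau_n X \simeq \ast$. For $(4) \Rightarrow (2)$, apply $(4)$ with $Y = \tau_n X$: triviality of $\Map(X, \tau_n X)$ forces the unit $X \to \tau_n X$ to be nullhomotopic, and applying $\tau_n$ identifies this null map with $\mathrm{id}_{\tau_n X}$, so $\tau_n X \simeq \ast$ in the stable $\infty$-category $\psp_p$.

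For $(2) \Leftrightarrow (3)$ the argument is componentwise. If $\tau_n X \simeq \ast$ then $X \simeq \tau^n X = \{\tau^n X_i\}$ is a pro-object whose components are $n$-connected spectra in $\sp_{p-fin}$, so for $k \leq n$ the $t$-structure on $\sp$ gives $\map_{\sp}(\tau^n X_i, \Sigma^k H\ZZ/p) = 0$; passing to the filtered colimit yields $H^k(X, \ZZ/p) = 0$. Conversely, since any object of $\sp_{p-fin}$ admits a finite filtration with subquotients equal to shifts of $H\ZZ/p$, the vanishing of $\tau_n X$ in $\psp_p$ reduces to showing $H^j(\tau_n X, \ZZ/p) = 0$ for every $j \in \ZZ$. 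For $j > n$ this is immediate from the $t$-structure (source $n$-truncated, target $j$-connective). For $j \leq n$, apply the long exact sequence
\[H^{j-1}(\tau^n X, \ZZ/p) \to H^j(\tau_n X, \ZZ/p) \to H^j(X, \ZZ/p) \to H^j(\tau^n X, \ZZ/p)\]
coming from the cofiber sequence; hypothesis~(3) kills $H^j(X, \ZZ/p)$, and the $(2) \Rightarrow (3)$ direction applied to the $n$-connected object $\tau^n X$ kills the two outer terms.

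The main delicate point is ensuring these $t$-structure arguments transfer cleanly from $\sp$ to $\psp_p$. This works because $\tau_n$ and $\tau^n$ are defined on pro-objects componentwise and commute with cofiltered limits, and because for $W \in \sp_{p-fin}$ the mapping spectrum $\map_{\psp_p}(\{X_i\}, W)$ is the filtered colimit $\on{colim}_i \map_\sp(X_i, W)$, so vanishing of each term in the colimit gives vanishing of the whole.
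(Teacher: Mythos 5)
Most of your argument is sound and close in spirit to the paper's: $(1)\Leftrightarrow(2)$ from the cofiber sequence, $(2)\Leftrightarrow(4)$ from the adjunction between $\tau_n$ and the inclusion of $n$-truncated pro-objects (the paper makes this adjunction explicit; you should too, but it is the same idea), and your $(2)\Rightarrow(3)$ is the paper's ``$(1)\Rightarrow(3)$ is obvious''. The genuine problem is the direction $(3)\Rightarrow(2)$. Your reduction of ``$\tau_nX\simeq\ast$'' to ``$H^j(\tau_nX,\ZZ/p)=0$ for all $j$'' is legitimate, and the range $j\leq n$ is handled correctly by the long exact sequence. But the claim that the case $j>n$ ``is immediate from the $t$-structure (source $n$-truncated, target $j$-connective)'' invokes the orthogonality of the $t$-structure in the wrong direction: what vanishes are maps from connective objects to coconnective ones, not maps from truncated objects to highly connective ones. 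Concretely, $[H\ZZ/p,\Sigma^jH\ZZ/p]=H^j(H\ZZ/p,\ZZ/p)$ is the degree-$j$ part of the Steenrod algebra and is nonzero for infinitely many $j>0$, even though the source is $0$-truncated. So for $j>n$ the groups $H^j(\tau_nX,\ZZ/p)$ do not vanish for formal reasons; their vanishing is essentially equivalent to the conclusion you are trying to prove, and your argument gives no way to derive it from hypothesis $(3)$ --- the long exact sequence only traps $H^j(\tau_nX,\ZZ/p)$ between $H^{j-1}(\tau^nX,\ZZ/p)$ and $H^j(X,\ZZ/p)$, neither of which is controlled when $j>n$.

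The fix is to aim not at the vanishing of the full mapping \emph{spectrum} into shifts of $H\ZZ/p$, but only of the mapping \emph{space}, which is what condition $(4)$ actually asserts; this is the route the paper takes, proving $(3)\Rightarrow(4)$ directly. For $Y$ bounded below, $p$-finite and $n$-truncated, a finite Postnikov induction reduces the triviality of $\Map(X,Y)$ to that of $\Map(X,\Sigma^mH\ZZ/p)$ for $m\leq n$, whose homotopy groups are $\pi_k\Map(X,\Sigma^mH\ZZ/p)\cong H^{m-k}(X,\ZZ/p)$ with $k\geq 0$ and hence only involve cohomology in degrees $\leq n$, where $(3)$ applies; a general $n$-truncated $Y$ in $\psp_p$ is a cofiltered limit of such objects and $\Map(X,-)$ preserves limits. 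Combined with $(4)\Rightarrow(2)$ (take $Y=\tau_nX$, as you already do), this closes the cycle of implications.
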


\begin{proof}
The equivalence of (1) and (2) is obvious. The implication $(1)\implies (3)$ is obvious. 

We now prove the equivalence of (2) and (4). For this we consider the category $(\psp_p)_{\leq n}$ which is the pro-category of the $\infty$-category of spectra $Y$ with the following properties:
\begin{itemize}
\item The homotopy groups of $Y$ are almost all zero.
\item The non-zero homotopy groups of $Y$ are finite $p$-groups.
\item The homotopy groups of degree larger than $n$ are all $0$.
\end{itemize} 
This is canonically identified with the full subcategory of $\psp_p$ spanned by those $Y$ such that $Y\to\tau_nY$ is an equivalence. Now, one easily verifies that the inclusion $(\psp_p)_{\leq n}$ has a left adjoint given by $X\mapsto \tau_nX$. The desired equivalence follows from this adjunction.

We prove $(3)\implies (4)$. If $X$ satisfies $(3)$, we easily deduce by a Postnikov type argument that condition $(4)$ is satisfied for all spectra $Y$ that are bounded below, have $p$-finite homotopy groups and satisfy $Y\to\tau_nY$ is an equivalence. Now, let $Y$ be an object of $\psp_p$ that is such that $Y\to\tau_nY$ is an equivalence. Then $Y$ can be written as a cofiltered limit of spectra $Y_i$ that are bounded below, have $p$-finite homotopy groups and satisfy $Y\to\tau_nY$ is an equivalence. The assertion (4) follows from the fact that the functor $\Map(X,-)$ commutes with limits.
\end{proof}

\begin{lemm}\label{lemm: Mat preserves connectivity}
Let $Y$ be an $(-1)$-connected spectrum in $\psp_p^{ft}$, then $\Mat(Y)$ is $(-1)$-connected.
\end{lemm}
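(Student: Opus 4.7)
The plan is to reduce to the Eilenberg--MacLane case using the Postnikov tower of $Y$ in $\psp_p$. Since $Y$ is $(-1)$-connected and belongs to $\psp_p^{ft}$, its pro-homotopy groups $\pi_k Y$ vanish for $k \leq -1$ and are finitely generated $\ZZ_p$-modules $A_k$ for $k \geq 0$. Writing $Y \simeq \lim_n \tau_n Y$ with Postnikov fibers $\h{H}A_n[n]$, and using that $\Mat$ is a right adjoint between stable $\infty$-categories (so it preserves limits and fiber sequences), I can reduce the problem to controlling $\Mat(\h{H}A_n[n])$ and iterating.

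First I would upgrade Lemma \ref{lemm: HZZ_p is good} to the statement that the canonical map $HA \to \Mat(\h{H}A)$ is an equivalence for any finitely generated $\ZZ_p$-module $A$. The case $A = \ZZ/p^m$ is trivial because $H\ZZ/p^m$ already lies in $\sp_{p-fin}$, the case $A = \ZZ_p$ is Lemma \ref{lemm: HZZ_p is good}, and the general case follows from the structure theorem for finitely generated $\ZZ_p$-modules together with exactness of $\Mat$. Next I would prove by induction on $n \geq -1$ that $\Mat(\tau_n Y)$ is $(-1)$-connected in $\sp$. The base case $n = -1$ is immediate since $\tau_{-1}Y \simeq *$ by hypothesis. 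For the step, applying $\Mat$ to the fiber sequence $\h{H}A_n[n] \to \tau_n Y \to \tau_{n-1} Y$ yields a fiber sequence $HA_n[n] \to \Mat(\tau_n Y) \to \Mat(\tau_{n-1} Y)$ in $\sp$; since $n \geq 0$, the left-hand term is $(n-1)$-connected and in particular $(-1)$-connected, and the right-hand term is $(-1)$-connected by the inductive hypothesis, hence so is the middle term.

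Finally, since $\Mat$ preserves limits, $\Mat(Y) \simeq \lim_n \Mat(\tau_n Y)$, and I would read off $\pi_k \Mat(Y)$ from the Milnor $\lim^1$ sequence for this tower. For fixed $k \leq -1$, every $\pi_k \Mat(\tau_n Y)$ vanishes by the induction, and the tower $\{\pi_{k+1} \Mat(\tau_n Y)\}_n$ is eventually constant in $n$ because the fiber $HA_n[n]$ contributes only in degree $n \geq 0 > k+1$ once $n \geq 1$. Both $\lim$ and $\lim^1$ therefore vanish, so $\pi_k \Mat(Y) = 0$ for all $k \leq -1$. The main obstacle I expect is setting up the Postnikov tower cleanly in $\psp_p$---namely justifying $Y \simeq \lim_n \tau_n Y$ and identifying the fibers with $\h{H}A_n[n]$---which should follow from the definition of $\tau_n$ on $\psp_p$ as the cofiltered-limit-preserving extension from $\sp_{p-fin}$, combined with the boundedness-below of $Y$ and the finite generation of each $A_k$.
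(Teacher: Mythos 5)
Your proof is correct and follows essentially the same route as the paper's: handle the truncated case by Postnikov induction down to the Eilenberg--MacLane pieces $\h{H}\ZZ_p$ and $H\ZZ/p^k$ (via Lemma \ref{lemm: HZZ_p is good}), then pass to the limit of the Postnikov tower using that $\Mat$ preserves limits and Milnor's exact sequence. You are in fact slightly more careful than the paper, which asserts $\pi_n(\Mat(Y))\cong\on{lim}_m\pi_n(\Mat(\tau_mY))$ without explicitly noting the Mittag--Leffler vanishing of $\lim^1$ that you supply.
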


\begin{proof}
The claim is true if $Y$ is also $m$-truncated for some $m$. Indeed, in that case a Postnikov tower argument allows us to reduce to the cases of $Y=\widehat{H}\ZZ_p$ and $Y=H\ZZ/p^k$ in which case the result is trivial. In general, we may write $Y$ as the limit of its Postnikov tower $Y\simeq\on{lim}_m\tau_mY$. Then, we have $\Mat(Y)\simeq\on{lim}_m\Mat(\tau_m Y)$. By Milnor's short exact sequence, we see that $\pi_n(\Mat(Y))\cong \on{lim}_m\pi_n(\Mat(\tau_mY))$. In particular, $\Mat(Y)$ is $(-1)$-connected.
\end{proof}

\begin{lemm}\label{lemm: completion commutes with Postnikov}
Let $X$ be any spectrum, then the obvious map $\h{X}\to \lim_n\h{\tau_nX}$ is a weak equivalence.
\end{lemm}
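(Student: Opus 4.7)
The plan is to test the map against objects of $\sp_{p-fin}$ via the $(\h{(-)}, \Mat)$-adjunction. Since $\psp_p = \Pro(\sp_{p-fin})$ is the pro-category of an essentially small stable $\infty$-category, a morphism in $\psp_p$ is an equivalence if and only if it induces an equivalence on $\Map_{\psp_p}(-, F)$ for every $F \in \sp_{p-fin}$. So it suffices to show that for every such $F$, the canonical map
\[\Map_{\psp_p}(\lim_n \h{\tau_n X}, F)\longrightarrow \Map_{\psp_p}(\h{X}, F)\]
induced by the structural maps $\h{X}\to \h{\tau_n X}$ is a weak equivalence of spaces.

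For the right-hand side, the adjunction and the fact that $\Mat$ restricts to the identity on $\sp_{p-fin}\subset\sp$ give $\Map_{\psp_p}(\h{X}, F)\simeq \Map_\sp(X, F)$. For the left-hand side, I would use that objects of $\sp_{p-fin}$ are cocompact in $\Pro(\sp_{p-fin})$, so $\Map_{\psp_p}(-, F)$ converts cofiltered limits of pro-spectra into filtered colimits of spaces; combined again with the adjunction this produces
\[\Map_{\psp_p}(\lim_n \h{\tau_n X}, F)\simeq \on{colim}_n \Map_{\psp_p}(\h{\tau_n X}, F)\simeq \on{colim}_n \Map_\sp(\tau_n X, F).\]
It then remains to identify this colimit with $\Map_\sp(X, F)$. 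Since $F\in\sp_{p-fin}$ has only finitely many nonzero homotopy groups, $F$ is $m$-truncated for some $m$. Postnikov truncation $\tau_n$ is left adjoint to the inclusion of $n$-truncated spectra, so for every $n\geq m$ the unit $X\to \tau_n X$ induces an equivalence $\Map_\sp(\tau_n X, F)\simeq \Map_\sp(X, F)$, compatibly with the transition maps. The filtered colimit is therefore eventually constant with value $\Map_\sp(X, F)$, as desired.

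The main subtlety I anticipate is the cocompactness step for the cofiltered limit $\lim_n \h{\tau_n X}$ in $\psp_p$, whose terms are themselves pro-spectra rather than objects of $\sp_{p-fin}$. This is part of the universal property of $\Pro(\sp_{p-fin})$ as the free completion of $\sp_{p-fin}$ under cofiltered limits in the $\infty$-categorical sense, and can be unpacked via the standard double-limit formula for mapping spaces in a pro-category; everything else in the argument is formal.
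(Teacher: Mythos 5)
Your proof is correct. It differs from the paper's in how it organizes the final step: the paper uses exactness of $\h{(-)}$ to form the fiber sequences $\h{\tau^n X}\to\h{X}\to\h{\tau_n X}$, reduces the statement to the contractibility of $\lim_n\h{\tau^n X}$, then tests only against $H\ZZ/p$ (invoking that $H\ZZ/p$ generates $\sp_{p-fin}$ as a stable subcategory, as in its Lemma on $\h{H}\ZZ_p$) and concludes from the $n$-connectivity of $\tau^n X$ that each cohomology group $H^k(\tau^n X,\ZZ/p)$ vanishes for $n$ large. You instead test against an arbitrary $F\in\sp_{p-fin}$ and use that $F$ is truncated to see that $\on{colim}_n\Map_{\sp}(\tau_n X,F)$ stabilizes at $\Map_{\sp}(X,F)$. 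The two arguments share the essential ingredient — the cocompactness of objects of $\sp_{p-fin}$ inside $\Pro(\sp_{p-fin})$, which converts the cofiltered limit into a filtered colimit of mapping spaces, and which you rightly identify as the only nonformal point (it holds because $\Pro(\sp_{p-fin})\opp$ is a category of left exact functors to spaces in which filtered colimits are computed pointwise). Your version avoids both the passage to the fiber and the reduction to the single generator $H\ZZ/p$, at the small cost of keeping track of a general truncated test object; the paper's version localizes all the work in a connectivity estimate for $\tau^n X$. Both are complete.
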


\begin{proof}
Since the completion functor is exact, there are fiber sequences
\[\h{\tau^n X}\to \h{X}\to \h{\tau_nX}\]
that are functorial in $n$. Hence, it suffices to prove that $\lim_n\h{\tau^n X}$ is contractible. As in Lemma \ref{lemm: HZZ_p is good}, it is enough to prove that for any $k$, the group $H^k(\tau^n X,\ZZ/p)$ eventually becomes trivial for $n$ large enough. But this follows immediately from the fact that $\tau^nX$ is $n$-connected.
\end{proof}

\begin{prop}\label{prop: unit equivalence}
Let $Y$ be an object of $\sp_p^{ft}$. Then the unit map $Y\to\Mat(\h{Y})$ is a weak equivalence.
\end{prop}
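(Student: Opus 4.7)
The plan is to reduce from the general bounded-below case to Eilenberg--MacLane spectra via the Postnikov tower, and then from Eilenberg--MacLane spectra to the two basic building blocks $H\ZZ_p$ and $H\ZZ/p^k$ using the structure of finitely generated $\ZZ_p$-modules.

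First I would observe that the unit $\eta\colon\id\to\Mat\circ\h{(-)}$ is a natural transformation between exact functors (left and right adjoints between stable $\infty$-categories preserve cofiber/fiber sequences), so given any cofiber sequence $F\to Y\to Y'$ in $\sp$, the induced diagram of fiber sequences and the fact that a map of fiber sequences which is an equivalence on two terms is an equivalence on the third allows us to prove the claim inductively along Postnikov towers. Hence I would proceed in three steps.

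Step 1 (Eilenberg--MacLane case): Verify the proposition for $Y=HA$ with $A$ a finitely generated $\ZZ_p$-module. Since both $\h{(-)}$ and $\Mat$ preserve finite products, we reduce to $A=\ZZ_p$ and $A=\ZZ/p^k$. The case $A=\ZZ_p$ is exactly Lemma \ref{lemm: HZZ_p is good} combined with the given weak equivalence $H\ZZ_p\to\Mat(\h{H}\ZZ_p)$. For $A=\ZZ/p^k$ the spectrum $HA$ lies already in $\sp_{p-fin}$, and since the inclusion $\sp_{p-fin}\hookrightarrow\psp_p$ is fully faithful and right adjoint to $\Mat$ when composed appropriately, one checks directly that $\h{HA}$ is just $HA$ viewed in $\psp_p$ and $\Mat$ returns $HA$, so the unit is an equivalence.

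Step 2 (truncated case): Show the proposition for any $Y\in\sp_p^{ft}$ which is additionally $m$-truncated for some $m$. Since $Y$ is bounded below, its Postnikov tower has only finitely many nontrivial stages. I induct on the number of nonzero homotopy groups using the fiber sequences $H\pi_n(Y)[n]\to\tau_n Y\to\tau_{n-1}Y$; the homotopy groups $\pi_n(Y)$ are finitely generated $\ZZ_p$-modules so the fiber term is handled by Step 1, while the base is handled by the inductive hypothesis, and exactness of both $\h{(-)}$ and $\Mat$ (together with the fact that the unit is a natural transformation) forces the unit to be an equivalence on $\tau_n Y$.

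Step 3 (general case): For arbitrary $Y\in\sp_p^{ft}$, write $Y\simeq\lim_n\tau_nY$ (Postnikov convergence for bounded-below spectra). Applying Lemma \ref{lemm: completion commutes with Postnikov} together with the fact that $\Mat$ preserves cofiltered limits gives
\[\Mat(\h{Y})\simeq\Mat\bigl(\lim_n\h{\tau_nY}\bigr)\simeq\lim_n\Mat(\h{\tau_nY}).\]
Each $\tau_n Y$ lies in $\sp_p^{ft}$ and is truncated, so by Step 2 the map $\tau_n Y\to\Mat(\h{\tau_nY})$ is a weak equivalence; naturality of the unit then identifies the resulting limit with $\lim_n\tau_nY\simeq Y$, proving that $\eta_Y$ is an equivalence.

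The main obstacle I anticipate is making the limit interchange in Step 3 rigorous: one must ensure that the natural transformation $\eta$ is compatible with the Postnikov limit, which amounts to commuting $\Mat$ with the cofiltered limit $\lim_n\h{\tau_nY}$ in $\psp_p$ (immediate from $\Mat$ being right adjoint) and verifying that the Postnikov tower of the pro-spectrum $\h{Y}$ is computed termwise, which is precisely the content of Lemma \ref{lemm: completion commutes with Postnikov}. Everything else is a formal consequence of the exactness of the two functors and the basic Eilenberg--MacLane input of Lemma \ref{lemm: HZZ_p is good}.
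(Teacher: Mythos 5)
Your proposal is correct and follows essentially the same route as the paper: reduce to the Eilenberg--MacLane building blocks $H\ZZ/p^k$ and $H\ZZ_p$ (the latter via Lemma \ref{lemm: HZZ_p is good}), use exactness to handle the truncated case, and then pass to the limit of the Postnikov tower using Lemma \ref{lemm: completion commutes with Postnikov} together with the fact that $\Mat$ is a right adjoint. The paper merely phrases the reduction more compactly by observing that the ``good'' spectra form a triangulated subcategory containing $H\ZZ/p$ and $H\ZZ_p$.
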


\begin{proof}
Let us call a spectrum $Y$ good if this is the case. The good spectra form a triangulated subcategory of $\sp$. This subcategory contains $H\ZZ/p$. According to Lemma \ref{lemm: HZZ_p is good}, it also contains $H\ZZ_p$. Hence, it contains $\tau_nY$ for any $n$ and any $Y$ in $\sp_p^{ft}$. 

Thus, for $Y$ in $\sp_p^{ft}$, there is an equivalence $\tau_nY\to \Mat(\h{\tau_nY})$ for each $n$. In order to prove that $Y$ is good, it will be enough to prove that the map
\[\Mat(\h{Y})\to\on{lim}_n\Mat(\h{\tau_n Y})\]
is a weak equivalence. Since $\Mat$ is a right adjoint, it is enough to prove that the obvious map $\h{Y}\to\on{lim}_n\h{\tau_n Y}$ is a weak equivalence but this is the content of Lemma \ref{lemm: completion commutes with Postnikov}.
\end{proof}

The following corollary compares the two notions of $p$-completion for spectra.

\begin{coro}\label{coro: equivalence of the two completions}
There is a natural transformation from $L_p$ to $\Mat(\h{-})$ that is a weak equivalence when restricted to spectra $X$ such that $L_pX$ is in $\sp_p^{ft}$. In particular, it is a weak equivalence on spectra that are bounded below and have finitely generated homotopy groups.
\end{coro}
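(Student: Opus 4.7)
The plan is to first produce the natural transformation by a universal property argument, then reduce the claim about equivalences to the previous Proposition via a naturality square.

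For the construction of the natural transformation, I would first observe that for every spectrum $X$, the materialization $\Mat(\h{X})$ is $L_p$-local. Indeed, $\Mat(\h{X})$ is by construction a cofiltered limit of spectra in $\sp_{p-fin}$, each of which is bounded below with finite $p$-group homotopy groups and hence $L_p$-local; and $L_p$-local spectra are closed under cofiltered limits in $\sp$. Granted this, the unit map $X \to \Mat(\h{X})$ factors uniquely through $L_pX$ by the universal property of Bousfield localization, giving a natural transformation $L_pX \to \Mat(\h{X})$.

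For the equivalence claim, the key intermediate step is that the functor $\h{(-)}\colon \sp \to \psp_p$ sends $L_p$-equivalences to equivalences. This follows from the adjunction: a map $f$ becomes an equivalence after $\h{(-)}$ iff $\map_{\sp}(f,F)$ is an equivalence for every $F \in \sp_{p-fin}$, and since each such $F$ is $L_p$-local, any $L_p$-equivalence has this property. Applying this to the localization map $X \to L_pX$, the induced map $\Mat(\h{X}) \to \Mat(\h{L_pX})$ is an equivalence.

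Now I would put everything inside the naturality square of the unit map, of the form
\[
\xymatrix{
X \ar[r] \ar[d] & L_pX \ar[d] \\
\Mat(\h{X}) \ar[r] & \Mat(\h{L_pX}),
}
\]
in which the bottom row is an equivalence by the previous paragraph, and the right column is an equivalence whenever $L_pX \in \sp_p^{ft}$ by Proposition \ref{prop: unit equivalence}. Under the uniqueness in the universal property, the transformation $L_pX \to \Mat(\h{X})$ defined above is identified with the composite of the right column and the inverse of the bottom equivalence, hence is an equivalence. For the \textbf{in particular} clause, if $X$ is bounded below with finitely generated homotopy groups then, as recalled at the start of the section, $\pi_\ast L_pX = \pi_\ast X \otimes \ZZ_p$ which is finitely generated over $\ZZ_p$, so $L_pX \in \sp_p^{ft}$ and the hypothesis applies.

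The main subtle point will be justifying that $\sp_{p-fin} \subset \sp_p$, i.e.\ that bounded above spectra with finite $p$-group homotopy groups are $L_p$-local; this uses that such spectra are built from finitely many copies of shifts of $H\ZZ/p^k$, each of which is $L_p$-local because it is $p$-complete and bounded below (so $S\ZZ/p$-localization and $H\ZZ/p$-localization agree on it).
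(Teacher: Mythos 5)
Your argument is correct and in substance the same as the paper's: both proofs rest on the observation that $\h{X}\to\h{L_pX}$ is an equivalence (because every $F\in\sp_{p-fin}$ is local with respect to $S\ZZ/p$) combined with Proposition \ref{prop: unit equivalence} applied to $L_pX\in\sp_p^{ft}$. The only difference is one of packaging: the paper simply defines the transformation as the composite $L_pX\to\Mat(\h{L_pX})\simeq\Mat(\h{X})$, whereas you produce it by noting that $\Mat(\h{X})$ is $L_p$-local and factoring the unit through the localization, and then need the naturality square to identify your map with that composite.
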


\begin{proof}
We first make the observation that for any spectrum $X$, the obvious map $\h{X}\to\h{L_pX}$ is a weak equivalence. Indeed, it suffices to prove that for any $F$ in $\sp_{p-fin}$, the map $X\to L_pX$ induces a weak equivalence
\[\map(L_pX,F)\to \map(X,F),\]
but this follows from the fact that $F$ is local with respect to $S\ZZ/p$. 

Thus, there is a natural transformation of endofunctors of $\sp$:
\[\alpha(X):L_pX\to \Mat(\h{L_pX})\simeq \Mat(\h{X})\]

Proposition \ref{prop: unit equivalence} tells us that $\alpha(X)$ is a weak equivalence whenever $L_p(X)$ is in $\sp_p^{ft}$ as desired. 
\end{proof}

\begin{prop}\label{prop: counit equivalence}
Let $Y$ be an object of $\psp_p^{ft}$, then the counit map $\h{\Mat(Y)}\to Y$ is an equivalence.
\end{prop}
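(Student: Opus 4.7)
The plan is to prove the counit is an equivalence on a thick subcategory generated by two elementary objects, to extend by Postnikov tower to every bounded object of $\psp_p^{ft}$, and then to reduce the unbounded case to the bounded one using Lemma~\ref{lemm: completion commutes with Postnikov} and Lemma~\ref{lemm: Mat preserves connectivity}. I would first introduce $\mathcal{C}\subseteq\psp_p$, the full subcategory of objects $Y$ for which the counit $\h{\Mat Y}\to Y$ is an equivalence; since $\h{(-)}$ is a left adjoint and $\Mat$ a right adjoint between stable $\infty$-categories, both are exact, so $\mathcal{C}$ is thick. For any $X\in\sp_{p-fin}$, the fully faithfulness of $\sp_{p-fin}\hookrightarrow\psp_p$ together with $\Mat X = X$ yields via Yoneda that $\h{X}\simeq X$, so $X\in\mathcal{C}$; in particular $H\ZZ/p^n\in\mathcal{C}$. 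For $Y=\h{H}\ZZ_p$, the paper already observes $\Mat\h{H}\ZZ_p\simeq H\ZZ_p$, and the counit is precisely the map $\h{H\ZZ_p}\to\h{H}\ZZ_p$, which is an equivalence by Lemma~\ref{lemm: HZZ_p is good}.

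Any finitely generated $\ZZ_p$-module $A$ is a finite sum of copies of $\ZZ_p$ and $\ZZ/p^n$, so the Eilenberg--MacLane pro-spectrum $HA$ is a finite direct sum of shifts of $\h{H}\ZZ_p$ and $H\ZZ/p^n$, and hence lies in $\mathcal{C}$. A bounded object of $\psp_p^{ft}$ admits a finite Postnikov tower whose layers are of this form, whence $\tau_n Y\in\mathcal{C}$ for every $Y\in\psp_p^{ft}$ and every $n$. Moreover, since $\Mat$ is exact, the same Postnikov analysis shows that $\Mat\tau_n Y$ is a spectrum whose homotopy is concentrated in the same finite range of degrees as that of $\tau_n Y$; in particular $\Mat\tau_n Y$ is $n$-truncated.

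For a general $Y\in\psp_p^{ft}$, I would write $Y\simeq\lim_n\tau_n Y$, a standard boundedness fact in $\psp_p^{ft}$. Applying Lemma~\ref{lemm: completion commutes with Postnikov} to the spectrum $\Mat Y$ gives $\h{\Mat Y}\simeq\lim_n\h{\tau_n\Mat Y}$, so the crucial step is the identification $\tau_n\Mat Y\simeq\Mat\tau_n Y$. Applying $\Mat$ to the fiber sequence $\tau^n Y\to Y\to\tau_n Y$ produces a fiber sequence $\Mat\tau^n Y\to\Mat Y\to\Mat\tau_n Y$ whose right term is $n$-truncated by the previous paragraph and whose left term is $n$-connected, by applying Lemma~\ref{lemm: Mat preserves connectivity} to the $(-1)$-connected object $\Sigma^{-n-1}\tau^n Y\in\psp_p^{ft}$; by the universal property of Postnikov truncation this identifies $\Mat\tau_n Y$ with $\tau_n\Mat Y$. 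Combined with $\tau_n Y\in\mathcal{C}$ this gives $\h{\Mat Y}\simeq\lim_n\h{\Mat\tau_n Y}\simeq\lim_n\tau_n Y\simeq Y$. The main obstacle I foresee is precisely this identification $\tau_n\Mat Y\simeq\Mat\tau_n Y$: since $\Mat$ is only a right adjoint and truncation does not commute with arbitrary cofiltered limits in pro-categories, the argument must simultaneously leverage the connectivity estimate of Lemma~\ref{lemm: Mat preserves connectivity} and the Postnikov-level analysis of bounded objects to recognize the fiber sequence as the $n$-truncation sequence of $\Mat Y$.
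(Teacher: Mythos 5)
Your proposal is correct and follows essentially the same route as the paper: establish the counit equivalence for $\sp_{p-fin}$ and for $\h{H}\ZZ_p$ via Lemma~\ref{lemm: HZZ_p is good}, extend by Postnikov induction to truncated objects, and then handle general $Y\in\psp_p^{ft}$ by writing $Y\simeq\lim_n\tau_nY$, identifying $\Mat(\tau_nY)$ with $\tau_n\Mat(Y)$ using the connectivity estimate of Lemma~\ref{lemm: Mat preserves connectivity}, and commuting the Postnikov limit past completion via Lemma~\ref{lemm: completion commutes with Postnikov}. The only differences are expository (the explicit thick-subcategory framing and the desuspension trick for applying Lemma~\ref{lemm: Mat preserves connectivity} to the fiber), which match the paper's implicit reasoning.
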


\begin{proof}
This is obviously true for $Y$ in $\sp_{p-fin}$. This is true for $Y=\h{H}\ZZ_p$ by Lemma \ref{lemm: HZZ_p is good}. Hence, this is true for any pro-spectrum of the form $\h{H}A$ where $A$ is a finitely generated abelian pro-$p$ group. An easy Postnikov induction shows that this is true for any object $Y$ of $\psp_{p}^{ft}$ that is bounded above (i.e $Y\to \tau_n Y$ is an equivalence for some $n$).

Now a general $Y$ is the limit of the Postnikov tower: $Y\simeq \lim_n\tau_n Y$. The functor $\Mat$ is a right adjoint and hence preserves limits. Since $n$-truncated spectra are stable under limits, the map $\Mat(Y)\to\Mat(\tau_n Y)$ has an $n$-truncated target and an $n$-connected fiber by Lemma \ref{lemm: Mat preserves connectivity}, hence it exhibits $\Mat(\tau_n Y)$ as the $n$-truncation of $\Mat(Y)$. From these two facts, we see that are reduced to proving that the obvious map
\[(\lim_n\tau_n\Mat(Y))^{\wedge}\to Y\]
is  a weak equivalence. Finally, using Lemma \ref{lemm: completion commutes with Postnikov}, we can pull the Postnikov limit outside of the completion and we deduce that the counit is a weak equivalence as desired.
\end{proof}

We can now deduce our main result. 

\begin{theo}\label{theo: equivalence of categories}
The functors $\Mat$ and $\h{(-)}$ are mutually inverse equivalences between $\psp_p^{ft}$ and $\sp_p^{ft}$. 
\end{theo}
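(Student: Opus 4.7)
The plan is to reduce to the two main propositions already established: Proposition \ref{prop: unit equivalence} gives the unit equivalence $X \simeq \Mat(\h{X})$ for $X \in \sp_p^{ft}$, and Proposition \ref{prop: counit equivalence} gives the counit equivalence $\h{\Mat(Y)} \simeq Y$ for $Y \in \psp_p^{ft}$. Granted these, the remaining content of the theorem is simply that $\h{(-)}$ and $\Mat$ restrict to functors between $\sp_p^{ft}$ and $\psp_p^{ft}$; once this is checked, the adjoint pair $(\h{(-)}, \Mat)$ automatically becomes an adjoint equivalence on these subcategories.

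For the restriction $\Mat\colon \psp_p^{ft} \to \sp_p^{ft}$, I would start from $Y \in \psp_p^{ft}$, write $Y \simeq \lim_n \tau_n Y$, and use that $\Mat$ preserves limits to get $\Mat(Y) \simeq \lim_n \Mat(\tau_n Y)$. A Postnikov induction on the bounded-above truncations, via the fiber sequences $\Sigma^k \h{H}\pi_k Y \to \tau_k Y \to \tau_{k-1} Y$, reduces the problem to checking $\Mat(\h{H}A) \simeq HA$ for any finitely generated $\ZZ_p$-module $A$. This is an easy extension of Lemma \ref{lemm: HZZ_p is good}: the case $A = \ZZ/p^k$ is trivial because $H\ZZ/p^k$ already lies in $\sp_{p-fin}$, and a direct sum decomposition handles the general case. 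Since $\pi_k(\tau_n Y) \simeq \pi_k Y$ for $n \geq k$, the pro-system $\{\pi_k \Mat(\tau_n Y)\}_n$ is eventually constant, so Milnor's $\lim^1$ vanishes and $\pi_k \Mat(Y)$ is the expected finitely generated $\ZZ_p$-module. Boundedness below of $\Mat(Y)$ is Lemma \ref{lemm: Mat preserves connectivity}, and $S\ZZ/p$-locality is inherited because each bounded $\Mat(\tau_n Y)$ has finitely generated $\ZZ_p$-module homotopy concentrated in finitely many degrees (hence is automatically local), and $\sp_p$ is closed under limits.

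For the other restriction $\h{(-)}\colon \sp_p^{ft} \to \psp_p^{ft}$, I would run a symmetric Postnikov argument: Lemma \ref{lemm: completion commutes with Postnikov} supplies $\h{X} \simeq \lim_n \h{\tau_n X}$, and a parallel induction reduces to the base case $\h{HA} \simeq \h{H}A$ for $A$ a finitely generated $\ZZ_p$-module, which is again the same extension of Lemma \ref{lemm: HZZ_p is good} described above. This identifies the pro-homotopy groups of $\h{X}$ with the expected finitely generated $\ZZ_p$-modules in the pro-category. Feeding these well-definedness checks back into the two propositions completes the argument.

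The main obstacle, in my view, is the bookkeeping through the Postnikov limits in both settings: one must verify that in each fixed degree the tower stabilizes after finitely many steps (to kill the relevant $\lim^1$ terms) and that both key structural properties---finite generation of the (pro-)homotopy groups and $S\ZZ/p$-locality---are genuinely preserved under the limits. In both directions these issues reduce cleanly to the Eilenberg--MacLane base case supplied by the minor extension of Lemma \ref{lemm: HZZ_p is good}.
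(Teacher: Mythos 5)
Your proposal is correct and follows essentially the same route as the paper: reduce via Propositions \ref{prop: unit equivalence} and \ref{prop: counit equivalence} to checking that the two functors restrict to the $ft$-subcategories, then run a Postnikov induction in each direction (using that $\Mat$ preserves limits and Postnikov towers, and Lemma \ref{lemm: completion commutes with Postnikov} on the other side) down to the Eilenberg--MacLane base cases supplied by Lemma \ref{lemm: HZZ_p is good}. Your extra bookkeeping on the $\lim^1$ terms and $S\ZZ/p$-locality is sound and only makes explicit what the paper leaves implicit.
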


\begin{proof}
Using Proposition \ref{prop: counit equivalence} and Proposition \ref{prop: unit equivalence}, it is enough to prove that $\Mat$ sends $\sp_p^{ft}$ to $\psp_p^{ft}$ and that $\h{(-)}$ sends $\psp_p^{ft}$ to $\sp_p^{ft}$. 

Let $X$ be a bounded spectrum in $\psp_p^{ft}$, that is $X\to\tau_n X$ is an equivalence for $n$ large. Using Postnikov induction, we see that $X$ is in the triangulated subcategory of $\psp_p$ generated by $\h{H}\ZZ_p$. Since $\Mat(\h{H}\ZZ_p)\simeq H\ZZ_p$ and $\Mat$ is exact,we deduce that $\Mat(X)$ is in $\sp_p^{ft}$. Now if $X$ is in $\psp_p^{ft}$, we can write it as $\lim_n\tau_nX$. The functor $\Mat$ preserves limits and the Postnikov tower as was observed in the proof of Proposition \ref{prop: counit equivalence}. But a spectrum is in $\sp_p^{ft}$ if and only if all of its Postnikov sections are in $\sp_p^{ft}$ thus $\Mat(X)$ is in $\sp_p^{ft}$ as desired.

Now, we prove the converse direction. Let $Y$ be a spectrum in $\sp_p^{ft}$. According to Lemma \ref{lemm: completion commutes with Postnikov}, in order to prove that $\h{Y}$ is in $\psp_p^{ft}$, it suffices to prove that $\h{\tau_n Y}$ is in $\psp_p^{ft}$ for all $n$. Hence we may assume that $Y$ is bounded (i.e. $Y\to\tau_n Y$ is an equivalence). Again, by a Postnikov induction, we deduce that it suffices to prove that $\h{H\ZZ_p}$ is in $\psp_p^{ft}$. This follows from Lemma \ref{lemm: HZZ_p is good}.
\end{proof}

\section{Symmetric monoidal structure}

We recall the theory of symmetric monoidal localizations of $\infty$-categories. Given a symmetric monoidal $\infty$-category $\icat{C}$, a collection $S$ of morphisms in $\icat{C}$ is said to be compatible with the symmetric monoidal structure if for any object $c\in \icat{C}$, the functor 
\[c\otimes-:\icat{C}\to\icat{C}\]
sends the maps of $S$ to maps of $S$. In this situation, it can be shown that the localization $\icat{C}[S^{-1}]$ can be given an essentially unique symmetric monoidal structure that makes the localization functor $\icat{C}\to\icat{C}[S^{-1}]$ into a symmetric monoidal functor. This fact is proved in \cite[Section 3]{hinichdwyer}.

From this general theory, we deduce that the $\infty$-category $\sp_p$ has a symmetric monoidal structure denoted $\wedge_p$. Given two spectra $X$ and $Y$ in $\sp_p$, the spectrum $X\wedge_pY$ has the homotopy type of $L_p(X\wedge Y)$. 

Similarly, the category $\psp_p$ has a symmetric monoidal structure simply denoted $\wedge$. We construct it by first constructing a symmetric monoidal structure on $\Pro(\sp)$ of all pro-spectra. This symmetric monoidal structure is the unique one such that $\wedge$ preserves cofiltered limits separately in each variable and that coincides with the standard monoidal structure on $\sp$. Then we use the following proposition that shows that $\psp_p$ can be seen as the localization of $\Pro(\sp)$ at a class of maps that is compatible with the symmetric monoidal structure.

\begin{prop}
The obvious map $\psp_p\to\Pro(\sp)$ exhibits $\psp_p$ as the localization of $\Pro(\sp)$ with respect to the $H\ZZ/p$-cohomology equivalences.
\end{prop}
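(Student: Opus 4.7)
The plan is to exhibit $\psp_p$ as a reflective subcategory of $\Pro(\sp)$ and then identify the class of maps inverted by the reflector.

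First, I would construct a left adjoint $L$ to the fully faithful inclusion $i:\psp_p=\Pro(\sp_{p-fin})\hookrightarrow \Pro(\sp)$ (full faithfulness being automatic from applying $\Pro$ to the fully faithful inclusion of a stable subcategory that preserves finite limits). Concretely, for $X\in\Pro(\sp)$ the object $L(X)$ is specified by the functor $\sp_{p-fin}\to\is$, $Z\mapsto \Map_{\Pro(\sp)}(X,Z)$. Since $\Map_{\Pro(\sp)}(X,-)$ preserves finite limits and $\sp_{p-fin}$ is a stable subcategory of $\sp$, this functor preserves finite limits in $Z$, and hence defines an object of $\psp_p=\Pro(\sp_{p-fin})$. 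The adjunction $\Map_{\psp_p}(L(X),Y)\simeq \Map_{\Pro(\sp)}(X,i(Y))$ would first be checked for $Y\in\sp_{p-fin}$ via pro-Yoneda and then extended to arbitrary $Y\in\psp_p$ by writing $Y$ as a cofiltered limit of objects of $\sp_{p-fin}$.

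Once the reflective structure is established, general theory of reflective $\infty$-localizations identifies $\psp_p$ with the localization $\Pro(\sp)[W^{-1}]$, where $W$ is the class of morphisms $f$ such that $L(f)$ is an equivalence. By Yoneda in $\psp_p$, $L(f)$ is an equivalence if and only if $\Map_{\Pro(\sp)}(f,Z)$ is a weak equivalence for every $Z\in\sp_{p-fin}$.

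It then remains to identify $W$ with the class of $H\ZZ/p$-cohomology equivalences. Given a map $f$ in $\Pro(\sp)$, I would introduce the full subcategory $\cat{E}_f\subseteq\sp$ of those $Z$ for which $\Map_{\Pro(\sp)}(f,Z)$ is a weak equivalence. Because mapping spaces preserve fiber sequences and retracts in the target, $\cat{E}_f$ is a stable subcategory of $\sp$ closed under retracts. If $f$ is an $H\ZZ/p$-cohomology equivalence then all shifts of $H\ZZ/p$ lie in $\cat{E}_f$ (cohomology equivalence in $\Pro(\sp)$ means iso on $\pi_0\Map(-,\Sigma^n H\ZZ/p)$ for every $n\in\ZZ$, which is equivalent to $\map_{\Pro(\sp)}(f,H\ZZ/p)$ being an equivalence of spectra), and by the very definition of $\sp_{p-fin}$ as the smallest stable subcategory of $\sp$ containing $H\ZZ/p$, this forces $\sp_{p-fin}\subseteq\cat{E}_f$, whence $L(f)$ is an equivalence. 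The converse is immediate: if $L(f)$ is an equivalence then $H\ZZ/p\in\cat{E}_f$, so $f$ is an $H\ZZ/p$-cohomology equivalence.

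The main obstacle I anticipate is the careful handling of the adjunction at the $\infty$-categorical level, since $\sp$ is not essentially small and the hom spaces in $\Pro(\sp)$ must be manipulated via the standard formula $\Map_{\Pro(\sp)}(\{X_i\},\{Y_j\})=\lim_j\on{colim}_i\Map_\sp(X_i,Y_j)$; once this is in place, the identification of the localizing class is a direct application of Yoneda and the generation property of $\sp_{p-fin}$.
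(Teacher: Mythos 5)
Your proposal is correct and follows essentially the same route as the paper: identify the left adjoint $\Pro(\sp)\to\psp_p$ as (co)restriction to $\sp_{p-fin}$, observe that a map is inverted precisely when mapping into every object of $\sp_{p-fin}$ gives an equivalence, and conclude using that $\sp_{p-fin}$ is the stable subcategory generated by $H\ZZ/p$. Your write-up is somewhat more explicit about setting up the reflective adjunction, but the substance is identical.
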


\begin{proof}
The $\infty$-category $\Pro(\sp)$ can be identified with the opposite of the $\infty$-category of exact functors from spectra to spectra. With this description, the left adjoint $\Pro(\sp)\to \psp_p$ takes a functor to its restriction to $\sp_{p-fin}$. It follows immediately that if a map is sent to an equivalence by this left adjoint, it must be an $H\ZZ/p$-cohomology equivalence. The converse follows from the observation that $\sp_{p-fin}$ can be described as the stable $\infty$-subcategory of $\sp$ generated by $H\ZZ/p$. Thus, if $f:X\to Y$ is an $H\ZZ/p$-cohomology equivalence, the map $\map(Y,F)\to\map(X,F)$ is an equivalence for any $F\in\sp_{p-fin}$.
\end{proof}
 
From this construction of the symmetric monoidal structure on $\psp_p$, we deduce the following proposition.

\begin{prop}
The completion functor $\h{-}:\sp\to\psp_p$ can be promoted to a symmetric monoidal functor.
\end{prop}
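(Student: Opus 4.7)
The plan is to realise the completion functor as a composition of two manifestly symmetric monoidal functors. Concretely, I would factor
\[\h{(-)} \colon \sp \xrightarrow{\iota} \Pro(\sp) \xrightarrow{L} \psp_p,\]
where $\iota$ is the canonical fully faithful embedding of a spectrum as a constant pro-system and $L$ is the left adjoint to the fully faithful inclusion $\psp_p \hookrightarrow \Pro(\sp)$ identified in the preceding proposition. To verify that $L\circ\iota$ really computes $\h{(-)}$, I would appeal to the uniqueness of left adjoints: the right adjoint of $L\circ\iota$ is the composite $\psp_p \hookrightarrow \Pro(\sp) \to \sp$ of the inclusion with the cofiltered limit functor, and this composite coincides with $\Mat$.

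Next, I would promote both factors to symmetric monoidal functors. The functor $\iota$ is symmetric monoidal essentially by construction: the tensor product on $\Pro(\sp)$ was defined as the unique extension of the smash product on $\sp$ preserving cofiltered limits separately in each variable. The functor $L$ is symmetric monoidal by Hinich's theory of symmetric monoidal localizations \cite{hinichdwyer} applied to the preceding proposition; the resulting structure on $\psp_p$ is precisely the $\wedge$ introduced above. Composing two symmetric monoidal functors then yields the required symmetric monoidal structure on $\h{(-)}$.

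The only non-formal ingredient is the compatibility of the class of $H\ZZ/p$-cohomology equivalences in $\Pro(\sp)$ with the tensor product, which is the hypothesis of Hinich's theorem. This compatibility was already absorbed into the construction of $\wedge$ on $\psp_p$ given just above the proposition, so no additional work is needed here; I expect this to be the main technical point in the background, and if a self-contained verification were required I would reduce it, via the fact that $H\ZZ/p$-cohomology equivalences are detected by mapping into the thick subcategory generated by $H\ZZ/p$, to the separate-variable cofiltered-limit-preservation of $\wedge$ on $\Pro(\sp)$.
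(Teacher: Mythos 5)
Your proposal is correct and is essentially identical to the paper's own (very brief) proof: both factor $\h{(-)}$ as $\sp\to\Pro(\sp)\to\psp_p$, with the first functor symmetric monoidal by the construction of $\wedge$ on $\Pro(\sp)$ and the second by Hinich's symmetric monoidal localization applied to the preceding proposition. Your additional verification that the composite really is $\h{(-)}$ (via uniqueness of adjoints) is a sensible detail the paper leaves implicit.
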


\begin{proof}
We can write it as the following composite
\[\sp\to\Pro(\sp)\to\psp_p,\]
where the first map and second map are symmetric monoidal.
\end{proof}

The functor $\sp\to\psp_p$ inverts the $p$-local equivalences. It follows that it factors through $\sp_p$ and by the universal property of the symmetric monoidal localization, we get the following proposition.

\begin{prop}
The functor $\h{(-)}:\sp_p\to\psp_p$ can be promoted to a symmetric monoidal functor.
\end{prop}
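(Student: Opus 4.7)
The plan is to apply the universal property of symmetric monoidal localization from \cite[Section 3]{hinichdwyer}. The previous proposition has already established that the composite $\sp \to \Pro(\sp) \to \psp_p$ is a symmetric monoidal functor. Given that $\sp_p$ is, by construction, the symmetric monoidal localization of $\sp$ at the class of $S\ZZ/p$-equivalences (as recalled at the opening of this section, where the compatibility of this class with the smash product on $\sp$ underlies the existence of $\wedge_p$), it suffices to verify that the symmetric monoidal functor $\h{(-)}: \sp \to \psp_p$ inverts every $S\ZZ/p$-equivalence.

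For this verification, I would argue as follows. Every object $F \in \sp_{p-fin}$ is local with respect to $S\ZZ/p$, since $\sp_{p-fin}$ is the stable subcategory of $\sp$ generated by $H\ZZ/p$ and $H\ZZ/p$ is itself $S\ZZ/p$-local. Consequently, if $f: X \to Y$ is an $S\ZZ/p$-equivalence in $\sp$, then $\map(Y, F) \to \map(X, F)$ is a weak equivalence for every $F \in \sp_{p-fin}$. Unwinding the defining adjunction $\h{(-)} \dashv \Mat$ of the pro-$p$ completion, this says precisely that $\h{f}: \h{X} \to \h{Y}$ is an equivalence in $\psp_p$. This is the same remark that was already used in the proof of Corollary \ref{coro: equivalence of the two completions}.

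With these ingredients in hand, the universal property of symmetric monoidal localization produces an essentially unique symmetric monoidal functor $\sp_p \to \psp_p$ fitting into a commutative triangle with the symmetric monoidal localization $\sp \to \sp_p$; this functor necessarily agrees on underlying $\infty$-categories with the already-constructed completion functor $\h{(-)}: \sp_p \to \psp_p$. The only step that involves any genuine content, as opposed to pure formalism, is the verification that $\h{(-)}: \sp \to \psp_p$ inverts $S\ZZ/p$-equivalences, and this is essentially immediate from the locality of the generators of $\sp_{p-fin}$. No substantive obstacle arises; the proof is entirely a matter of reducing the statement to Hinich's general theorem via the preceding observations.
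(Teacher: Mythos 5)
Your proof is correct and takes essentially the same route as the paper, which likewise obtains the proposition by observing that the symmetric monoidal functor $\h{(-)}:\sp\to\psp_p$ inverts the $S\ZZ/p$-equivalences and then invoking the universal property of the symmetric monoidal localization $\sp\to\sp_p$. The only difference is that you spell out the locality check (every object of $\sp_{p-fin}$ is $S\ZZ/p$-local, being generated by $H\ZZ/p$), which the paper leaves implicit here and records separately in the proof of Corollary \ref{coro: equivalence of the two completions}.
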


We can now state and prove the main result of this section.

\begin{prop}
The functors $\Mat$ and $\h{(-)}$ can be promoted to mutually inverse symmetric monoidal equivalences between $\sp_p^{ft}$ and $\psp_p^{ft}$.
\end{prop}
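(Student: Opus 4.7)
The plan is to combine Theorem \ref{theo: equivalence of categories} (the underlying equivalence) with the preceding proposition (which promotes $\h{(-)}:\sp_p\to\psp_p$ to a symmetric monoidal functor). The only structural work to do is checking closure: $\sp_p^{ft}\subset\sp_p$ should be closed under $\wedge_p$ and $\psp_p^{ft}\subset\psp_p$ under $\wedge$, so that both subcategories inherit symmetric monoidal structures from their ambient $\infty$-categories. Once that is done, the restriction of $\h{(-)}$ is a symmetric monoidal equivalence, and its inverse $\Mat$ inherits a canonical symmetric monoidal structure from the general fact that equivalences in $\icat{CAlg}(\icat{Cat}_\infty)$ can be coherently inverted.

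For closure of $\sp_p^{ft}$ under $\wedge_p$, I would proceed by Postnikov induction, mimicking the strategy used in the proof of Theorem \ref{theo: equivalence of categories}. Bounded-above objects of $\sp_p^{ft}$ are finite extensions of shifts of $H\ZZ_p$ and $H\ZZ/p^k$, so closure on bounded-above objects reduces to the known fact that the pairwise $\wedge_p$-products of these Eilenberg--MacLane spectra have finitely generated $\ZZ_p$-module homotopy in each degree (classical computation; e.g.\ $H\FF_p\wedge_p H\FF_p$ gives the dual Steenrod algebra). The general case follows by writing $X\simeq\lim_n\tau_n X$ and $Y\simeq\lim_n\tau_n Y$, commuting $\wedge_p$ past each tower separately, and applying Milnor-type short exact sequences as in Lemma \ref{lemm: completion commutes with Postnikov} to see that the limit still has finitely generated $\ZZ_p$-module homotopy in each degree. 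Closure of $\psp_p^{ft}$ under $\wedge$ in $\psp_p$ then follows for free: by Theorem \ref{theo: equivalence of categories} any $\tilde X,\tilde Y\in\psp_p^{ft}$ are of the form $\h{X},\h{Y}$ with $X,Y\in\sp_p^{ft}$, and symmetric monoidality of $\h{(-)}$ gives $\tilde X\wedge\tilde Y\simeq\h{X\wedge_p Y}$, which lies in $\psp_p^{ft}$ by the closure just established on $\sp_p^{ft}$.

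With closure in hand, the restriction $\h{(-)}:\sp_p^{ft}\to\psp_p^{ft}$ is automatically symmetric monoidal (restriction of a symmetric monoidal functor between symmetric monoidal full subcategories) and an equivalence of underlying $\infty$-categories by Theorem \ref{theo: equivalence of categories}. It is therefore an equivalence in $\icat{CAlg}(\icat{Cat}_\infty)$, and its inverse $\Mat$ inherits a unique symmetric monoidal structure making the whole adjoint equivalence symmetric monoidal in both directions. The main obstacle is the closure/finiteness check for $\wedge_p$ on $\sp_p^{ft}$; once this is in place, the rest of the argument is purely formal.
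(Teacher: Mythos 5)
Your proposal is correct and follows essentially the same route as the paper: reduce formally to showing that $\sp_p^{ft}$ is closed under $\wedge_p$ (with closure of $\psp_p^{ft}$ and the symmetric monoidality of the inverse coming for free), then run a Postnikov induction down to Eilenberg--MacLane spectra, using that $L_p$ preserves connectivity. The only cosmetic difference is at the base case, where the paper computes $\pi_0(L_p(HA\wedge HB))\cong A\otimes_{\ZZ_p}B$ directly rather than invoking the classical computation of smashes of Eilenberg--MacLane spectra such as the dual Steenrod algebra.
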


\begin{proof}
In light of the above proposition and of Theorem \ref{theo: equivalence of categories}, it suffices to prove that $\sp_p^{ft}$ is stable under the symmetric monoidal structure of $\sp_p$. That is, we want to show that, given two spectra $X$ and $Y$ in $\sp_p^{ft}$, the smash product $X\wedge_p Y$ is in $\sp_p^{ft}$. Using the cofiber sequences
\[\tau^nX\wedge_pY\to X\wedge_pY\to \tau_nX\wedge_pY\]
and the fact that $\ZZ/p$-localization preserves connectivity (see \cite[Proposition 2.5.]{bousfieldlocalization})
we see that for each integer $k$, the homotopy group $\pi_k(X\wedge_p Y)$ coincides with $\pi_k(\tau_nX\wedge_p Y)$ for $n$ large enough. Hence we can assume without loss of generality that $X$ is truncated. Using again an induction on the number of non-zero homotopy groups of $X$, we can reduce to the case where $X=HA$ with $A$ a finitely generated $\ZZ_p$-module. We can then form a similar reduction on the $Y$ variable and reduce ourselves to proving that, if $A$ and $B$ are two finitely generated $\ZZ_p$-modules, the group $\pi_0(L_p(HA\wedge HB))$ is a finitely generated $\ZZ_p$-module. We claim that in general, we have the formula
\[\pi_0(L_p(HA\wedge HB))\cong A\otimes_{\ZZ_p} B.\]
In order to prove this, we may reduce to the case when $B=\ZZ_p$ or $B=\ZZ/p^k$ for some $k$. We do the case $B=\ZZ_p$, the other case being similar but easier. By the fact that the localization $L_p$ is symmetric monoidal, we deduce that there is an equivalence
\[L_p(HA\wedge H\ZZ)\simeq L_p(HA\wedge H\ZZ_p).\]
Moreover, since $L_p$ preserves connectivity, we have 
\[A=\pi_0(L_p(HA\wedge\SS))\cong \pi_0(L_p(HA\wedge H\ZZ))\cong \pi_0(L_p(HA\wedge H\ZZ_p))\]
as desired.
\end{proof}

\section{Stable automorphism}

\subsection{Weak operads vs operads}

We denote by $\cat{OP}$ the prop that defines the structure of an operad. That is, $\cat{OP}$ is a symmetric monoidal category and if $(\cat{C},\otimes)$ is another symmetric monoidal category, there is an equivalence of categories between the category of operads in $\cat{C}$ and the category of symmetric monoidal functors $\cat{OP}\to\cat{C}$. We denote by $\icat{N}{\cat{OP}}$ the underlying symmetric monoidal $\infty$-category.

\begin{defi}
Let $\icat{C}$ be a symmetric monoidal $\infty$-category. We define the $\infty$-category $\iop\icat{C}$ of operads in $\icat{C}$ to be the $\infty$-category of symmetric monoidal functors $\icat{N}\cat{OP}\to\icat{C}$
\end{defi}

\begin{cons}
We construct a symmetric monoidal functor $\cat{OP}\to\Psi\opp$ where $\Psi\opp$ is given its cartesian symmetric monoidal structure (see Section \ref{section : weak operads} for the definition of $\Psi$). Consider the collection of representable functors on the category of operads in sets: $\cat{Op}\cat{Set}(F(n),-)$ where $F(n)$ denotes the free operad on an operation of arity $n$. This collection has the structure of an operad in the category of functors, which by Yoneda's lemma gives the collection of objects $F(n)$ the structure of an operad in the category $\Psi\opp$. This operad is classified by a symmetric monoidal map $u:\cat{OP}\to\Psi\opp$.
\end{cons}

\begin{cons}
We construct, for any $\infty$-category $\icat{C}$ with products, a functor
\[u^*:\icat{WOp}\icat{C}\to\iop\icat{C},\]
where $\iop\icat{C}$ is the $\infty$-category of symmetric monoidal functors \[\icat{N}\cat{OP}\to(\icat{C},\times)\]
The functor $u^*$ is obtained via the following zig-zag
\[\icat{WOp}\icat{C}\xleftarrow{\simeq}\Fun^{\otimes}(\icat{N}((\Psi\opp)),\icat{C})\to\iop\icat{C},\]
where the left hand side equivalence is given by \cite[Proposition 2.4.1.7]{luriehigheralgebra} and the right hand side map is induced by precomposition with the functor $u$ constructed in the previous paragraph.
\end{cons}

We have the $\infty$-category $\iop\sp$ of operads in spectra. We can also form the $\infty$-category $\iop\sp_p$ of operads in $\sp_p$. The localization functor $L_p$ induces a functor
\[\iop\sp\to\iop\sp_p.\]

\subsection{Unipotent completion}

We refer the reader to appendix A of \cite{hainweighted} for background material on the theory of unipotent completion. If $G$ is a discrete group and $\kk$ is a field of characteristic $0$, there exist a prounipotent group $G_{\kk}$ (denoted $G^{un}_{/\kk}$ in \cite{hainweighted}) which is universal with respect to maps $G\to U(\kk)$ with $U$ a unipotent algebraic group over $\kk$. Note that, by \cite[Proposition A.1]{hainweighted},  the group $G_{\kk}$ is just $(G_{\QQ})\times_{\QQ}\kk$.  Now if $G$ and $\kk$ have topologies, one can also form the continuous pro-unipotent completion of $G$ over $\kk$. This is a prounipotent group over $\kk$ denoted $G_{\kk}$ and which is universal with respect to continuous maps $G\to U(\kk)$ for $U$ a unipotent group scheme over $\kk$.

The important result for us will be the following:

\begin{theo}\label{theo: unipotent vs pro p}
Let $G$ be a finitely generated discrete group and $p$ be a prime number, then there is a natural isomorphism of prounipotent groups over $\QQ_p$:
\[G_{\QQ_p}\cong (\h{G}_p)_{\QQ_p}.\]
\end{theo}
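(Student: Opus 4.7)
The plan is to deduce the theorem directly from the universal properties of the two completions involved. The natural map $G \to \h{G}_p$ is continuous (giving $G$ the discrete topology) and, by functoriality of the continuous prounipotent completion, induces a morphism of prounipotent $\QQ_p$-groups
\[
\alpha\colon G_{\QQ_p} \longrightarrow (\h{G}_p)_{\QQ_p}.
\]
To prove $\alpha$ is an isomorphism it suffices, via the Yoneda lemma applied to both universal properties, to show that for every unipotent algebraic group $U$ over $\QQ_p$ the restriction map
\[
\Hom^{\on{cts}}_{\on{Grp}}(\h{G}_p,\, U(\QQ_p)) \longrightarrow \Hom_{\on{Grp}}(G,\, U(\QQ_p))
\]
is a bijection.

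The injectivity (uniqueness of the continuous extension) is immediate: the image of $G$ in $\h{G}_p$ is dense by construction, and $U(\QQ_p)$ is Hausdorff, so two continuous homomorphisms out of $\h{G}_p$ that agree on $G$ are equal.

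For surjectivity, let $\rho\colon G \to U(\QQ_p)$ be any homomorphism. The group $U(\QQ_p)$ is a torsion-free $p$-adic Lie group whose Lie algebra $\mathfrak{u}$ is a nilpotent $\QQ_p$-Lie algebra, and the exponential map $\exp\colon \mathfrak{u}(\QQ_p) \to U(\QQ_p)$ is a bijection of $p$-adic analytic manifolds. The key point is that $\rho(G)$ is contained in a compact open subgroup $K$ of $U(\QQ_p)$ which is itself a pro-$p$ group. To see this, take $\log$ of a finite set of generators of $\rho(G)$ in $\mathfrak{u}(\QQ_p)$; after scaling by a sufficiently large power of $p$ they lie in a BCH-closed $\ZZ_p$-lattice $\Lambda \subset \mathfrak{u}(\QQ_p)$, and one checks that the subgroup $K := \exp(\Lambda) \subset U(\QQ_p)$ contains $\rho(G)$ (this is where finite generation of $G$ is used). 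Being compact, torsion-free and $p$-adic analytic, $K$ is automatically pro-$p$: it has an open normal pro-$p$ subgroup of finite index by Lazard's theorem, and torsion-freeness forces that index to equal $1$. Now the universal property of the pro-$p$ completion $\h{G}_p$ applied to the homomorphism $\rho\colon G \to K$ gives a unique continuous extension $\widehat{\rho}\colon \h{G}_p \to K$, and composing with the inclusion $K \hookrightarrow U(\QQ_p)$ yields the required continuous extension of $\rho$.

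The main obstacle is the containment of $\rho(G)$ in a compact pro-$p$ subgroup of $U(\QQ_p)$; once that is established the rest of the argument is purely formal. This step itself is standard $p$-adic Lie theory (exponential coordinates and BCH on a nilpotent Lie algebra), and it is here, and only here, that the hypothesis that $G$ be finitely generated is used.
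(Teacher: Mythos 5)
The paper gives no argument for this statement---it is quoted verbatim from Hain--Matsumoto (\cite[Theorem A.6]{hainweighted})---so there is nothing to compare line by line. Your overall strategy is the standard one and is the right one: reduce by Yoneda (through pro-objects) to showing that restriction along $G\to\h{G}_p$ induces a bijection $\Hom^{\on{cts}}(\h{G}_p,U(\QQ_p))\to\Hom(G,U(\QQ_p))$ for every unipotent algebraic group $U$ over $\QQ_p$; injectivity is density plus Hausdorffness, and surjectivity comes down to the claim that the finitely generated group $\rho(G)$ lies in a compact open pro-$p$ subgroup of $U(\QQ_p)$. That claim is true, but the two justifications you give for it do not work as written.

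First, the lattice step is internally inconsistent: if you rescale the $X_i=\log\rho(g_i)$ to $p^NX_i$ so that they lie in a BCH-closed lattice $\Lambda$, then $\exp(\Lambda)$ contains $\rho(g_i)^{p^N}=\exp(p^NX_i)$ but there is no reason for it to contain $\rho(g_i)$ itself; conversely, a lattice containing the unscaled $X_i$ need not be BCH-closed (denominators in BCH). The repair is either to weight the lattice by the lower central series (for a lattice $\Lambda_0\ni X_i$, the lattice $\Lambda_0+p^{-N}(\Lambda_0\cap\mathfrak{u}^2)+p^{-2N}(\Lambda_0\cap\mathfrak{u}^3)+\cdots$ is BCH-closed for $N\gg 0$ and still contains the $X_i$), or to bypass BCH entirely: embed $U$ in strictly upper unitriangular $n\times n$ matrices and note that any word in the generators is $I$ plus a sum of products of at most $n-1$ of the nilpotent parts, so all matrix entries of $\rho(G)$ have valuation bounded below; hence $\overline{\rho(G)}$ is compact and is contained in a compact open subgroup of $U(\QQ_p)$. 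Second, ``torsion-freeness forces that index to equal $1$'' is false ($\ZZ_p$ is torsion-free and has open subgroups of every $p$-power index); what torsion-freeness of $U(\QQ_p)$ actually gives, via profinite Sylow theory, is that a pro-$q$ Sylow subgroup of $K$ for $q\neq p$ meets the open pro-$p$ subgroup trivially, hence is finite, hence trivial, so the index is a power of $p$---and an extension of a finite $p$-group by a pro-$p$ group is pro-$p$, which is all you need. With these two repairs your proof is complete and is essentially the argument of Hain--Matsumoto.
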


\begin{proof}
This is \cite[Theorem A.6]{hainweighted}.
\end{proof}

The important takeaway of this theorem is  that the $\QQ_p$ unipotent completion of $G$ can be constructed functorially from the pro-$p$ completion of $G$.

If $C$ is a groupoid with finitely many object and $\kk$ is a field, we introduce the $\kk$-unipotent completion $C_\kk$ of $C$ which is universal with respect to maps from $C$ to unipotent groupoids over $\kk$ inducing the identity on objects. (A unipotent groupoid is a groupoid enriched in schemes over $\kk$ with the property that the automorphisms of any objects is a unipotent group.) In practice, if $C$ is connected, then $C$ is abstractly isomorphic to $G\times \on{Codisc}(S)$ (where $\on{Codisc}(S)$ denotes the groupoid whose set of objects is $S$ and in which any two objects are uniquely isomorphic). In that case $C_{\kk}$ is isomorphic to $G_{\kk}\times\on{Codisc}(S)$.

\subsection{The pro-algebraic Grothendieck-Teichmüller group}

Recall the operads $\pab$ of parentesized braids. Since all the objects sets are finite, we can form its pro-unipotent completion $\pab_{\QQ}$ over $\QQ$. For $R$ a $\QQ$-algebra, we define a group $\on{GT}(R)$ via the formula
\[\on{GT}(R)=\Aut_0((\pab_\QQ)\times_{\QQ} R),\]
where $\Aut_0$ denotes the group of automorphisms of operads (in the category of pro-algebraic groupoids over $R$) inducing the identity on objects. It is proved by Drinfel'd in \cite{drinfeldquasi} that $\on{GT}$ is an affine group scheme over $\QQ$ that sits in a short exact sequence
\[1\to \on{GT}_1\to \on{GT}\to \mathbb{G}_m\to 1\]
where $\on{GT}_1$ is prounipotent. By work of Brown, the group $\on{GT}$ receives an injective map from the Tannakian fundamental group of the category of $\cat{MTM}(\ZZ)$ of mixed Tate motives over $\ZZ$ (see \cite{brownmixed})

By Theorem \ref{theo: unipotent vs pro p} (or rather the obvious generalization for groupoids), we have an isomorphism of operads $\pab_{\QQ_p}\cong (\h{\pab}_p)_{\QQ_p}$. Therefore the action of $\pgt_p$ on $\h{\pab}_p$ induces a continuous map $\pgt_p\to\on{GT}(\QQ_p)$. This map is an injection (see \cite{drinfeldquasi}). It is also compatible with the cyclotomic character in the sense that it fits in a commutative square
\[\xymatrix{
\pgt_p\ar[d]\ar[r]^{\chi}& \ZZ_p^{\times}\ar[d]\\
\on{GT}(\QQ_p)\ar[r]& \QQ_p^{\times}
}
\]

\begin{theo}[Tamarkin]\label{theo: Tamarkin}
Let $\kk$ be a field of characteristic $0$. There is an injective map
\[\on{GT}(\kk)\to \Aut_{\on{Ho}\iop\imod_{H\kk}}(H\kk\wedge\Sigma^\infty_+\od).\]
\end{theo}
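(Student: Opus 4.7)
The plan is to deduce this theorem by combining Fresse's theorem (recalled in the introduction) with arity-wise $H\kk$-linear duality.

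First I would construct a natural functor $\mathcal{D}:\icat{HCOp}_\kk\to\iop\imod_{H\kk}$. On an $E_\infty$-algebra $A$ of finite type over $\kk$ one takes the $H\kk$-linear Spanier--Whitehead dual $A^\vee$; the algebra structure on $A$ dualizes to a coalgebra structure on $A^\vee$, and a cooperadic structure on a Hopf cooperad $\{A(n)\}$ dualizes to an operadic structure on $\{A(n)^\vee\}$. Applied to $\CC(\od,\kk)$, this construction produces $H\kk\wedge\Sigma^\infty_+\od$ in $\iop\imod_{H\kk}$, because each $\od(n)$ has the homotopy type of a finite CW complex, so the canonical map $H\kk\wedge\Sigma^\infty_+\od(n)\to\CC(\od(n),\kk)^\vee$ is an $H\kk$-module equivalence, and the operad structure on the source matches the one produced by dualizing the cooperad structure of $\CC(\od,\kk)$.

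Composing Fresse's isomorphism $\on{GT}(\kk)\cong\Aut_{\on{Ho}\icat{HCOp}_\kk}(\CC(\od,\kk))$ with the map on automorphism groups induced by $\mathcal{D}$ yields the desired homomorphism
\[
\on{GT}(\kk)\to\Aut_{\on{Ho}\iop\imod_{H\kk}}(H\kk\wedge\Sigma^\infty_+\od).
\]

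The main obstacle will be injectivity: $\mathcal{D}$ forgets the arity-wise $E_\infty$-coalgebra structure on its image, so a priori an operad-level equivalence to the identity need not enhance to a Hopf cooperad-level equivalence to the identity. My plan is to show that the required enhancement exists in our setting. Biduality for perfect $H\kk$-modules recovers the $E_\infty$-algebra $\CC(\od(n),\kk)$ from its dual coalgebra, so it suffices to upgrade an $H\kk$-module homotopy that is compatible with the operad structure to an $E_\infty$-coalgebra homotopy compatible with the operad structure. The relevant obstruction groups can be controlled using Tamarkin's formality of the little $2$-disks operad over $\kk$, which reduces the computation to a statement on the Gerstenhaber operad and its cohomology. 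A kernel element of the above composite then becomes trivial in $\on{Ho}\icat{HCOp}_\kk$, and Fresse's isomorphism forces it to be trivial in $\on{GT}(\kk)$.
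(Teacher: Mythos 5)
Your construction of the homomorphism is essentially sound: arity-wise $H\kk$-linear duality does carry $\CC(\od,\kk)$ to $H\kk\wedge\Sigma^\infty_+\od$, since each $\od(n)$ is a finite complex so that biduality and the lax monoidal comparison maps for $(-)^\vee$ are equivalences, and composing with Fresse's isomorphism produces a map $\on{GT}(\kk)\to \Aut_{\on{Ho}\iop\imod_{H\kk}}(H\kk\wedge\Sigma^\infty_+\od)$. (The paper instead builds the action directly from the $\on{GT}(\kk)$-action on the pro-unipotent completion $\pab_\kk$ and the equivalence $\icat{Ch}_\kk\simeq\imod_{H\kk}$, without invoking Fresse's theorem; but your route to the \emph{map} is legitimate, modulo the weak-operad versus operad bookkeeping handled by the functor $u^*$.)

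The gap is injectivity, which is the entire content of the theorem, and your plan for it is essentially circular. You propose to show that the forgetful map $\Aut_{\on{Ho}\icat{HCOp}_\kk}(\CC(\od,\kk))\to\Aut_{\on{Ho}\iop\imod_{H\kk}}(H\kk\wedge\Sigma^\infty_+\od)$ is injective by obstruction-theoretically enhancing an operad-level homotopy to a Hopf-cooperad-level one. But given Fresse's isomorphism, that injectivity claim \emph{is} the theorem to be proved: you have reformulated the problem, not solved it. The obstruction groups you would need to control are precisely the degree-zero cohomology of the relevant deformation complexes, and computing those is the hard content of Tamarkin's and Willwacher's work; it does not follow from formality alone, and "a statement on the Gerstenhaber operad and its cohomology" is exactly the statement you are trying to prove. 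The paper's proof pins down the actual input: after choosing an associator to identify $\pab^{>0}_\kk$ with $\pacd^{>0}_\kk$, both $\on{GT}_1(\kk)$ and the group $\Aut^0$ of automorphisms of the chains operad acting trivially on homology are pro-unipotent, so injectivity may be checked on Lie algebras, where it becomes Tamarkin's theorem that $\mathfrak{grt}_1\to H^0(\on{Der}(\C(\pacd_\kk,\kk)))$ is injective; the remaining quotient $\kk^\times$ of $\on{GT}(\kk)$ is handled separately because it acts faithfully on homology. Your argument neither cites this Lie-algebra injectivity nor distinguishes the unipotent part of $\on{GT}(\kk)$ from the $\mathbb{G}_m$ part, and the reduction to Lie algebras is only available for the former; as written the proof is incomplete at its critical step.
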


\begin{proof}
Let $\pab^{>0}$ be the operad obtained from $\pab$ by replacing the arity $0$ operations by the empty groupoid. Let $\pab^{>0}_\kk$ be its pro-unipotent completion at the field $\kk$. It comes with an action of the group $\on{GT}(\kk)$ by \cite{drinfeldquasi}. As explained in \cite{barnatanassociators}, a choice of a Drinfeld's associator induces an isomorphism $\pab^{>0}_\kk\cong\pacd^{>0}_\kk$ where $\pacd_\kk$ is the operad of chord diagrams. This choice of isomorphism induces a commutative square
\[\xymatrix{
\on{GT}_1(\kk)\ar[r]\ar[d]_{\cong}& \Aut(\pab_\kk^{>0})\ar[d]^{\cong}\\
\on{GRT}_1(\kk)\ar[r]&\Aut(\pacd^{>0}_\kk)
}
\]
The bottom map of this commutative diagram restricts to a morphism
\[\on{GRT}_1(\kk)\to \Aut^0_{\on{Ho}\iop\icat{Ch}_\kk}(\C(\pacd^{>0}_\kk,\kk)).\]
where $\Aut^0$ denotes the group of automorphisms that induce the identity on homology. Both the source and the target of this map are pro-unipotent groups. Hence, this map is an injection if and only if the induced map on the Lie algebras is an injection. This last fact is exactly the main theorem of \cite{tamarkinaction}, that is the map
\[\mathfrak{grt}_1\to H^0(\on{Der}(\C(\pacd_\kk,\kk))\]
is an injection of pro-nilpotent Lie algebras. From this, we deduce that the map
\[\on{GT}_1(\kk)\to \Aut^0_{\on{Ho}\iop\icat{Ch}_\kk}(\C(\pab^{>0}_\kk,\kk))\]
is injective. On the other hand, the action of $\on{GT}(\kk)$ on $\pab_{\kk}^{>0}$ extends to an action of $\pab_{\kk}$. The induced map
\[\on{GT}_1(\kk)\to \Aut^0_{\on{Ho}\iop\icat{Ch}_\kk}(\C(\pab_\kk,\kk))\]
factors the previous map and hence must also be injective. This map fits in the following commutative diagram of short exact sequences
\[\xymatrix{
1\ar[r]& \on{GT}_1(\kk)\ar[r]\ar[d]&\on{GT}(\kk)\ar[r]\ar[d]&\kk^{\times}\ar[r]\ar[d]&1\\
1\ar[r]&\Aut^0(\C(\pab_\kk))\ar[r]&\Aut(\C(\pab_{\kk}))\ar[r]&\Aut(\on{H}_*(\pab_\kk))\ar[r]&1
}
\]
The right vertical map in this diagram is an isomorphism. It follows that the middle vertical map is injective. Finally the equivalence of symmetric monoidal $\infty$-categories between $\icat{Ch}_{\kk}$ and $\imod_{H\kk}$ induce an isomorphism
\[\Aut_{\on{Ho}\iop\icat{Ch}_{\kk}}(\C(\od,\kk))\cong \Aut_{\on{Ho}\iop\imod_{H\kk}}(H\kk\wedge\Sigma^\infty_+\od).\]
\end{proof}

\begin{rem}
In \cite{willwacherkontsevich}, Willwacher promotes this result to an isomorphism 
\[\on{GT}(\kk)\cong \on{Aut}_{\on{Ho}\iop\icat{Ch}_\kk}(C_*(\od^{>0},\kk)).\]
We will however not need this stronger result.
\end{rem}

\subsection{Proof of Theorem \ref{theo: main theorem stable}}

Our goal is now to prove a Theorem \ref{theo: main theorem stable} which is a $p$-complete version of Tamarkin's theorem.

For $X$ a spectrum, we define $H\ZZ_p\barwedge X$ by the following formula
\[H\ZZ_p\barwedge X :=\on{lim}_n H\ZZ/p^n\wedge X,\]
where the limit is taken in the $\infty$-category of $H\ZZ_p$-modules. This construction defines a functor from spectra to $H\ZZ_p$-modules. Note that there is a natural transformation
\[H\ZZ_p\wedge X\to H\ZZ_p\barwedge X.\]
We will need the following two lemmas about this construction.

\begin{lemm}\label{lemm: completed tensor product}
For any bounded below spectrum $X$, the map $X\to L_{p}X$ induces a weak equivalence
\[H\ZZ_p\barwedge X\to H\ZZ_p\barwedge L_{p}X.\]
\end{lemm}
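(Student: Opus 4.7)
The plan is to reduce the statement to the vanishing of $H\ZZ_p\barwedge F$, where $F$ denotes the fiber of the unit map $X\to L_pX$ in $\sp$. Since each functor $H\ZZ/p^n\wedge(-)$ is exact and since sequential limits in $H\ZZ_p$-modules preserve fiber sequences, the construction $H\ZZ_p\barwedge(-)$ sends cofiber sequences of spectra to fiber sequences. Applied to the cofiber sequence $F\to X\to L_pX$, this reduces the lemma to showing that $H\ZZ_p\barwedge F\simeq 0$.

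Next I would verify the base case $H\ZZ/p\wedge F\simeq 0$. Because $X$ is bounded below and $L_p$ preserves connectivity (\cite[Proposition 2.5]{bousfieldlocalization}, already cited in the previous section), the fiber $F$ is also bounded below. On bounded below spectra, $L_p$ coincides with the $H\ZZ/p$-localization by \cite[Theorem 3.1]{bousfieldlocalization}, also invoked in the paper's review of $p$-complete stable homotopy theory. Hence $X\to L_pX$ is an $H\ZZ/p$-homology equivalence, which is precisely the vanishing of $\pi_*(H\ZZ/p\wedge F)$.

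With the base case in hand, I would run an induction on $n$ to show that $H\ZZ/p^n\wedge F\simeq 0$ for all $n\geq 1$. The short exact sequence $0\to\ZZ/p\to\ZZ/p^n\to\ZZ/p^{n-1}\to 0$ yields a cofiber sequence of spectra
\[H\ZZ/p\to H\ZZ/p^n\to H\ZZ/p^{n-1},\]
and smashing with $F$ then combining the base case with the induction hypothesis gives the inductive step. Taking the limit over $n$ yields $H\ZZ_p\barwedge F\simeq\lim_n 0\simeq 0$; no $\lim^1$ subtlety arises because every term in the tower is already contractible.

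The only real point of substance is the base case, which relies essentially on the bounded-below hypothesis on $X$: it is what lets us identify $L_p$ with Bousfield's $H\ZZ/p$-localization and conclude $H_*(F;\ZZ/p)=0$. Once this is established, the induction on $n$ and the passage to the limit are formal.
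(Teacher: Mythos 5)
Your proof is correct and takes essentially the same route as the paper's: both identify $L_p$ with $H\ZZ/p$-localization on bounded below spectra and then use the Bockstein cofiber sequences $H\ZZ/p\to H\ZZ/p^{n}\to H\ZZ/p^{n-1}$ to promote the $H\ZZ/p$-equivalence to an $H\ZZ/p^n$-equivalence for every $n$ before passing to the limit. The only cosmetic difference is that the paper packages the inductive step as the statement that all the $H\ZZ/p^n$ lie in the same Bousfield class, whereas you run the induction directly on the fiber of $X\to L_pX$.
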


\begin{proof}
First, we observe that all the spectra $H\ZZ/p^n$ are in the same Bousfield class (i.e. define the same localization functor). It suffices to prove that they have the same acyclics. The formula
\[H\ZZ/p\wedge X\simeq H\ZZ/p\wedge_{H\ZZ/p^n} H\ZZ/p^n\wedge X\]
shows that $H\ZZ/p^n$-acyclics are $H\ZZ/p$-acyclics. Conversely, the Bockstein cofiber sequences
\[H\ZZ/p\to H\ZZ/p^{n+1}\to H\ZZ/p^n\]
inductively implies that  $H\ZZ/p$-acyclics are $H\ZZ/p^n$-acyclics for all $n$. 

From this observation and the fact that for bounded below spectra, $L_p$ coincides with $H\ZZ/p$-localization, we deduce that the map $X\to L_{p}X$ induces an equivalence of $H\ZZ_p$-modules $H\ZZ/p^n\wedge X\to H\ZZ/p^n\wedge L_{p}X$ for each $n$. Taking the homotopy limit with respect to $n$ gives the desired result.
\end{proof}

\begin{lemm}\label{lemm: equivalence on finite spectra}
The natural transformation 
\[H\ZZ_p\wedge X\to H\ZZ_p\barwedge X\]
is a weak equivalence on finite spectra.
\end{lemm}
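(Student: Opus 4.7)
The plan is to observe that both sides, viewed as functors from spectra to $H\ZZ_p$-modules, are exact, so the class of spectra on which the natural transformation is an equivalence is a stable subcategory closed under retracts; then it suffices to check the equivalence on the sphere spectrum $\SS$, and the subcategory generated by $\SS$ under finite colimits and retracts is the category of finite spectra.

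More precisely, I would first show that the source functor $F(X) := H\ZZ_p \wedge X$ preserves cofiber sequences because smashing with a fixed object is exact. For the target functor $G(X) := H\ZZ_p \barwedge X = \lim_n H\ZZ/p^n \wedge X$, each term $H\ZZ/p^n \wedge -$ is exact by the same reasoning, and since we are in a stable $\infty$-category, cofiber sequences coincide with fiber sequences, so the cofiltered limit over $n$ still preserves them. The natural transformation is clearly natural in $X$, so the full subcategory of $\sp$ on which it is an equivalence is closed under cofiber sequences, shifts, and retracts.

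The only remaining step is then to check the case $X=\SS$. Here the source is $H\ZZ_p$ and the target is $\lim_n H\ZZ/p^n$ (the smash with $\SS$ being the identity), and the natural transformation is the map induced by the compatible family of quotient maps $H\ZZ_p \to H\ZZ/p^n$. On $\pi_0$ this is the identity $\ZZ_p \to \lim_n \ZZ/p^n = \ZZ_p$; the Milnor $\lim^1$ term vanishes because the tower $\{\ZZ/p^n\}$ is Mittag--Leffler (indeed, with surjective transition maps), and all other homotopy groups are zero on both sides. Hence the map is a weak equivalence on $\SS$, so on all finite spectra.

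I do not expect any genuine obstacle: the only subtlety worth flagging is that the cofiltered limit defining $\barwedge$ preserves cofiber sequences, which uses the stable (equivalently, pointed $\infty$-categorical) structure rather than being a general fact about limits.
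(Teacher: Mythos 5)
Your proof is correct and follows the same route as the paper: both functors are exact, so it suffices to check the map on the sphere spectrum, where it is an equivalence. The paper states this in three sentences without spelling out the exactness of the cofiltered limit or the Mittag--Leffler/$\lim^1$ computation at $\SS$; your added details are accurate and fill in exactly what the paper leaves implicit.
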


\begin{proof}
This map is obviously a weak equivalence on the sphere spectrum. Moreover both functors are exact. Therefore the map must be an equivalence on any finite spectrum.
\end{proof}

Now, we extend the functor $H\ZZ_p\barwedge-$ to operads. This is easy to do since limits in the $\infty$-category of operads in $H\ZZ_p$-modules are computed aritywise. Thus for $\oper{O}$ an operad in spectra, we may define
\[H\ZZ_p\barwedge \oper{O}:=\on{lim}_nH\ZZ/p^n\wedge \oper{O},\]
where the limit is taken in the $\infty$-category $\iop\imod_{H\ZZ_p}$ of operads in $H\ZZ_p$-modules. We moreover have an equivalence 
\[(H\ZZ_p\barwedge\oper{O})(n)\simeq H\ZZ_p\barwedge\oper{O}(n).\]

We may now state the main theorem of this section.

\begin{theo}
There is a faithful action of $\pgt_p$ on $L_{p}\Sigma^{\infty}_+\od$ in the homotopy category of $\iop\sp_p$.
\end{theo}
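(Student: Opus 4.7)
The plan is to transfer the unstable action of Theorem \ref{theo : main p} to the stable setting, and then use Tamarkin's theorem via rationalization to detect faithfulness.

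First I would produce the action. By Theorem \ref{theo : main p}, the group of units $\pgt_p$ of $\pugt_p$ acts on $\h{N^\Psi\od}$ in the homotopy category of $\icat{WOp}\ips_p$. The levelwise suspension spectrum functor $\Sigma^\infty_+:\ips_p\to\psp_p$ preserves finite products, so sends weak operads to weak operads. Since each $\od(n)$ has the homotopy type of a finite CW-complex, each arity $\Sigma^\infty_+\h{\od(n)}$ lies in $\psp_p^{ft}$. Applying the equivalence $\Mat:\psp_p^{ft}\simeq\sp_p^{ft}$ aritywise, and using Corollary \ref{coro: equivalence of the two completions}, we identify the result with the weak operad in $\sp_p^{ft}$ whose arity $n$ object is $L_p\Sigma^\infty_+\od(n)$. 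Construction \ref{cons: from weak operads to operads} then converts this weak operad into an object of $\iop\sp_p$ equivalent to $L_p\Sigma^\infty_+\od$, equipped with a $\pgt_p$-action.

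For faithfulness, I would extract from this action an action on $H\QQ_p\wedge\Sigma^\infty_+\od$ and compare it with Tamarkin's action. Since each $\od(n)$ is a finite CW-complex, Lemma \ref{lemm: equivalence on finite spectra} gives equivalences $H\ZZ_p\wedge\Sigma^\infty_+\od(n)\simeq H\ZZ_p\barwedge\Sigma^\infty_+\od(n)$, while Lemma \ref{lemm: completed tensor product} identifies this with $H\ZZ_p\barwedge L_p\Sigma^\infty_+\od(n)$. Smashing the $\pgt_p$-action on $L_p\Sigma^\infty_+\od$ with $H\ZZ_p$ aritywise and then inverting $p$ produces a $\pgt_p$-action on $H\QQ_p\wedge\Sigma^\infty_+\od$ in the homotopy category of operads in $H\QQ_p$-modules. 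Granting the key compatibility
\[
\xymatrix{
\pgt_p\ar[r]\ar@{^{(}->}[d] & \Aut_{\on{Ho}\iop\sp_p}(L_p\Sigma^\infty_+\od)\ar[d]\\
\on{GT}(\QQ_p)\ar[r] & \Aut_{\on{Ho}\iop\imod_{H\QQ_p}}(H\QQ_p\wedge\Sigma^\infty_+\od)
}
\]
where the left vertical map is Drinfeld's injection and the bottom is Tamarkin's morphism from Theorem \ref{theo: Tamarkin}, the composition from top-left to bottom-right is injective, and faithfulness of the top row follows.

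The main obstacle is verifying this compatibility square. It amounts to tracing how both actions originate from automorphisms of the groupoid operad $\pab$: our $\pgt_p$-action comes from automorphisms of $\h{\pab}_p$ via the construction leading to Theorem \ref{theo : main p}, while Tamarkin builds his action from automorphisms of the $\QQ_p$-prounipotent completion $\pab_{\QQ_p}$; the two are matched by Theorem \ref{theo: unipotent vs pro p} which identifies $\pab_{\QQ_p}\cong(\h{\pab}_p)_{\QQ_p}$. The verification requires tracking naturality through the stabilization, the passage $\Mat:\psp_p^{ft}\to\sp_p^{ft}$, and the final smash with $H\QQ_p$, so that the resulting $H\QQ_p$-cochain operad is recognized as $\C(\pab_{\QQ_p},\QQ_p)$ with its tautological $\on{GT}(\QQ_p)$-action, whose precomposition with $\pgt_p\hookrightarrow\on{GT}(\QQ_p)$ recovers our construction.
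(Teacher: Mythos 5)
Your overall strategy coincides with the paper's: stabilize the $\pgt_p$-action on the pro-$p$ completion of $\od$, identify the result with $L_p\Sigma^\infty_+\od$ via $\Mat$ and Corollary \ref{coro: equivalence of the two completions}, and detect faithfulness by smashing with $H\ZZ_p$, inverting $p$, and comparing with Tamarkin's theorem through the inclusion $\pgt_p\hookrightarrow\on{GT}(\QQ_p)$. You also correctly isolate the compatibility square as the delicate point, which the paper itself only addresses with the phrase ``a careful inspection of the proof.''

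However, there is a genuine error in your construction of the action. You assert that $\Sigma^\infty_+:\ips_p\to\psp_p$ ``preserves finite products, so sends weak operads to weak operads.'' It does not: $\Sigma^\infty_+(X\times Y)\simeq\Sigma^\infty_+X\wedge\Sigma^\infty_+Y$, which is not the product (i.e.\ the direct sum) of $\Sigma^\infty_+X$ and $\Sigma^\infty_+Y$ in spectra. Consequently the levelwise application of $\Sigma^\infty_+$ to a product-preserving functor $\Psi\opp\to\ips_p$ does not yield a product-preserving functor $\Psi\opp\to\psp_p$, so there is no ``weak operad in $\sp_p^{ft}$'' to feed into Construction \ref{cons: from weak operads to operads} — and even if there were, that construction produces an operad for the \emph{cartesian} symmetric monoidal structure, whereas the operad structure on $L_p\Sigma^\infty_+\od$ you need lives over the smash product. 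The paper's proof handles exactly this point by reversing your order of operations: it first applies $u^*$ in the unstable setting to convert the weak operad $\h{N^\Psi\od}$ into a genuine operad in $(\ips_p,\times)$, then promotes $\Sigma^\infty_+$ to a \emph{symmetric monoidal} functor $(\ips_p,\times)\to(\psp_p,\wedge)$ — constructed as the composite $\ips_p\to\Pro(\is)\to\Pro(\sp)\to\psp_p$ of symmetric monoidal functors — and only then applies the symmetric monoidal equivalence $\Mat:\psp_p^{ft}\simeq\sp_p^{ft}$ from the preceding section. Your faithfulness argument is otherwise sound and matches the paper's, but the action itself must be produced along these lines rather than through a levelwise stabilization of the weak operad.
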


\begin{proof}
The infinite loop space functor $\Omega^\infty$ is a finite limit preserving functor 
\[\Omega^\infty:\sp_{p-fin}\to\is_{p-fin},\]
therefore by the universal property of pro-categories it extends uniquely to a limit preserving functor $\Omega^{\infty}:\psp_p\to \ips_p$. We denote its left adjoint by $\Sigma^{\infty}_+$. If we identify the $\infty$-category $\psp_p$ with the opposite of the $\infty$-category of exact functors $\sp_{p-fin}\to \is$, the functor $\Sigma^{\infty}_+$ can be informally described by the formula
\[X=\{X_i\}_{i\in I}\in\Pro(\is_{p-fin})\mapsto\on{colim}_i\Map_{\sp}(\Sigma^{\infty}_+X_i,-).\]
Now, we can consider the following diagram of left adjoint functors
\[\xymatrix{
\is\ar[r]^{\h{(-)}}\ar[d]_{\Sigma^\infty_+}&\ips_p\ar[d]^{\Sigma^\infty_+}\\
\sp\ar[r]_{\h{(-)}}&\psp_p
}
\]
It commutes because the corresponding diagram of right adjoint commutes. From this fact and Corollary \ref{coro: equivalence of the two completions}, we deduce that if $X$ is a space which is such that $\Sigma^\infty_+(X)$ has finitely generated homotopy groups (e.g. if $X$ has the homotopy type of a finite CW-complex), then there is a weak equivalence
\[L_p(\Sigma^{\infty}_+X)\simeq \Mat(\Sigma^\infty_+(\h{X})).\]

Now, we claim that the functor $\Sigma^\infty_+:\ips_p\to\psp_p$ can be given the structure of a symmetric monoidal functor. In order to do so, we observe that it can be written as the following composite of symmetric monoidal functors
\[\ips_p\to \Pro(\is)\xrightarrow{\Sigma^\infty_+}\Pro(\sp)\xrightarrow{L}\psp_p,\]
where the first map is just the inclusion and the map $L$ is the localization with respect to the $H\ZZ/p$-cohomology equivalences.

From the above two observations, we deduce that $\Mat(\Sigma^\infty_+\h{\od})$ has an operad structure and is weakly equivalent to $L_p(\Sigma^\infty_+\od)$. But we also know that $\Mat(\Sigma^\infty_+\h{\od})$ is equipped with an action of the group $\pgt_p$. Transferring this action along the weak equivalence we deduce that $L_{p}\Sigma^{\infty}_+\od$ also has an action of $\pgt_p$. 

Thus, applying the functor $H\ZZ_p\barwedge$ and using Lemma \ref{lemm: completed tensor product}, we find a weak equivalence of operads in $H\ZZ_p$-modules
\[H\ZZ_p\barwedge \Sigma^{\infty}_+\od\to H\ZZ_p\barwedge L_{p}\Sigma^\infty_+\od\]
where the target has an action of $\pgt_p$. Now, using Lemma \ref{lemm: equivalence on finite spectra}, we find that the map
\[H\ZZ_p\wedge\Sigma^{\infty}_+\od\to H\ZZ_p\barwedge\Sigma^{\infty}_+\od\]
is a weak equivalence of operads in $H\ZZ_p$-modules. Combining these two equivalences, we have an equivalence of operads in $H\ZZ_p$-modules.
\[H\ZZ_p\wedge\Sigma^{\infty}_+\od\to H\ZZ_p\barwedge L_{p}\Sigma^{\infty}_+\od\]
where the target comes with an action of $\pgt_p$. Now, we can apply the functor $H\QQ_p\wedge_{H\ZZ_p}$ to this weak equivalence and we find a weak equivalence of operads in $H\QQ_p$-modules:
\[H\QQ_p\wedge\Sigma^{\infty}_+\od\to H\QQ_p\wedge_{H\ZZ_p}H\ZZ_p\barwedge L_{p}\Sigma^{\infty}_+\od\]
where the target comes with an action of $\pgt_p$. Thus, taking the automorphisms in the homotopy category of operads in $H\QQ_p$-modules, we find a map
\[\pgt_p\to\Aut_{\on{Ho}\iop\imod_{H\QQ_p}}(H\QQ_p\wedge\Sigma^{\infty}_+\od)\]
that factors through $\on{GT}(\QQ_p)$ as follows:
\[\pgt_p\to \on{GT}(\QQ_p)\to  \Aut_{\on{Ho}\iop\imod_{H\QQ_p}}(H\QQ_p\wedge\Sigma^{\infty}_+\od)\]
where the first map is the inclusion and the second map is the map that appears in Theorem \ref{theo: Tamarkin}. From that theorem, we deduce that the action of $\pgt_p$ on $H\QQ_p\wedge\Sigma^{\infty}_+\od$ is faithful. Since this action comes from an action on $L_p\Sigma^\infty_+\od$, we deduce that the action of $\pgt_p$ on that operad is also faithful as desired.
\end{proof}

\begin{rem}
In light of the above Theorem it seems reasonable to expect that $\pgt_p$ is indeed the group of automorphisms of $L_p\Sigma^\infty_+\od$. At this stage, a stable homotopy theorist will be tempted to introduce the groups $\pgt^{(n)}_p$ of automorphisms of $L_n\Sigma^\infty_+\od$ where $L_n$ denotes the localization with respect to the height $n$ and prime $p$ Morava $E$-theory spectrum. These groups fit into a tower
\[\ldots\to \pgt_p^{(n)}\to \pgt_p^{(n-1)}\to\ldots\to \pgt_p^{(0).}\]
By the main theorem of \cite{willwacherkontsevich}, we have $\pgt_p^{(0)}\cong \gt(\QQ_p)$, on the other hand, the limit of this tower is the group of automorphisms of $L_p\Sigma^\infty_+\od$ which   is conjecturally the group $\pgt_p$. In any case, since the map $\pgt_p\to \pgt_p^{(0)}$ is injective, and factors through $\pgt_p^{(n)}$, we deduce that for each $n$, there is an inclusion $\pgt_p\subset\pgt_p^{(n)}$.
\end{rem}

We conclude this paper with the following Proposition that shows that in the stable case, the action of $\pgt_p$ on $L_p\Sigma^\infty_+\od$ quite trivial in each arity.

\begin{prop}\label{prop: levelwise triviality}
For each $n$, the action of $\pgt_p$ on $L_p\Sigma^\infty_+\od(n)$ factors through the cyclotomic character. 
\end{prop}

\begin{proof}
First, we observe that $\Sigma_+^\infty\od(n)$ has the homotopy type of a finite wedge of spheres. Moreover any element of $\pgt_p$ that is in the kernel of the cyclotomic character acts trivially on $H_*(\od(n),\ZZ_p)\cong\pi_*(H\ZZ_p\wedge_p L_p \Sigma_+^\infty\od(n))$. Indeed, since the action of $\pgt_p$ is compatible with the operad structure it suffices to check it in arity $2$ which is where the generators of $H_*(\od(n),\ZZ_p)$ live. Thus the result follows from the following lemma. 
\end{proof}

\begin{lemm}
Let $X$ be a finite wedge of $p$-completed spheres equipped with a self-map $g$. Assume that $g$ induces the identity on $\pi_*(X\wedge_p H\ZZ_p)$, then $g$ is homotopic to the identity.
\end{lemm}

\begin{proof}
Let us denote by $H_*(X)$ the homotopy groups of $X\wedge_pH\ZZ_p$. The Atiyah-Hirzebruch spectral sequence gives us an isomorphism
\[\pi_*(X)\cong H_*(X)\otimes_{\mathbb{Z}_p}\pi_*(\mathbb{S}_p)\]
that is natural with respect to endomorphisms of $X$. It thus follows from the hypothesis that $g$ acts by the identity on the homotopy groups of $X$. Now, we observe that if $X$ is a finite wedges of $p$-completed spheres and $Y$ is any $p$-complete spectrum, the map
\[\Hom_{\on{Ho}\sp_p}(X,Y)\to \Hom_{\mathrm{Mod}_{\pi_*(L_p\mathbb{S})}}(\pi_*(X),\pi_*(Y))\]
is an isomorphism. In particular, we deduce that the map $g$ must be homotopic to the identity.
\end{proof}

\bibliographystyle{alpha}

\bibliography{biblio}

\end{document}